\documentclass[11pt,leqno,oneside]{amsart}
\usepackage{layout}
\usepackage{mathrsfs,dsfont}
\usepackage{amsmath,amstext,amsthm,amssymb,bbm,color}
\usepackage{mathtools}
\usepackage{enumerate}
\usepackage{comment}
\usepackage{charter}
\usepackage[nopatch=item]{microtype}
\usepackage{typearea}
\usepackage{pdfsync}
\usepackage[width=6.5in,height=8.7in]{geometry}

\usepackage{amsfonts}

\usepackage[utf8]{inputenc}

\def\bt{\begin{thm}}
\def\et{\end{thm}}
\def\bl{\begin{lem}}
\def\el{\end{lem}}
\def\bd{\begin{defn}}
\def\ed{\end{defn}}
\def\bc{\begin{cor}}
\def\ec{\end{cor}}
\def\bp{\begin{proof}}
\def\ep{\end{proof}}
\def\br{\begin{rem}}
\def\er{\end{rem}}

\newtheorem{thm}{Theorem}[section]
\newtheorem{prop}[thm]{Proposition}
\newtheorem{lem}[thm]{Lemma}
\newtheorem{defn}[thm]{Definition}
\newtheorem{example}[thm]{Example}
\newtheorem{rem}[thm]{Remark}
\newtheorem{cor}[thm]{Corollary}

\numberwithin{equation}{section}

\newcommand{\C}{\mathbb{C}}
\newcommand{\R}{\mathbb{R}}
\newcommand{\E}{\mathbb{E}}

\newcommand{\bthm}{\begin{thm}}
\newcommand{\ethm}{\end{thm}}
\newcommand{\bstp}{\begin{stp}}
\newcommand{\estp}{\end{stp}}
\newcommand{\blemma}{\begin{lemma}}
\newcommand{\elemma}{\end{lemma}}
\newcommand{\bprop}{\begin{prop}}
\newcommand{\eprop}{\end{prop}}
\newcommand{\bpf}{\begin{pf}}
\newcommand{\epf}{\end{pf}}
\newcommand{\bdefn}{\begin{defn}}
\newcommand{\edefn}{\end{defn}}
\newcommand{\brk}{\begin{rmrk}}
\newcommand{\erk}{\end{rmrk}}
\newcommand{\bcrl}{\begin{crl}}
\newcommand{\ecrl}{\end{crl}}
\newcommand{\Log}{\mathrm{Log}}
\newcommand{\Z}{\mathbb{Z}}
\newcommand{\PSH}{\mathrm{PSH}}

\title[Zeros of random $P$-polynomials]{Zeros of random $P$-polynomials in $\C^d$ with exponential  profiles}

\address{Faculty of Engineering and Natural Sciences, Sabanc{\i} University, \.{I}stanbul, Turkey}
\email{tbayraktar@sabanciuniv.edu}
\email{afrimbojnik@sabanciuniv.edu}

\date{\today}

\keywords{Random polynomials, exponential profile, Newton polytope, convex bodies, Legendre--Fenchel transform, toric pluripotential theory, zero currents, universality}
\subjclass[2020]{Primary: 32A60, 32U15; Secondary: 32U40, 60F05, 52A20}

\begin{document}

\author{Turgay Bayraktar \and Afrim Bojnik}
\thanks{T. Bayraktar is partially supported by T\"{U}B\.{I}TAK grant ARDEB-1001/124F37}
\thanks{A.\ Bojnik is partially supported by the Tosun Terzio\u{g}lu Chair Postdoctoral Fellowship.}

\begin{abstract}
We study random multivariate $P$-polynomials in $\C^d$ with monomial supports constrained to $nP\cap\Z_+^d$ for a convex body $P\subset\R_+^d$, and deterministic coefficients admitting a uniform exponential profile $f$ on $P$. Assuming the tail condition $\mathbb{P}(\log(1+|\xi_0|)>t)=o(t^{-d})$ on the i.i.d.\ complex coefficients, we prove that the normalized potentials $\frac1n\log|\mathbf{P}_n|$ converge in probability in $L^1_{\rm loc}(\C^d)$ to a deterministic toric plurisubharmonic function $\Phi_{P,f}$, and consequently the normalized zero currents $\frac1n[Z_{\mathbf{P}_n}]$ converge weakly to the closed positive $(1,1)$-current $dd^c\Phi_{P,f}$. Under the stronger logarithmic moment assumption $\E[(\log(1+|\xi_0|))^d]<\infty$, we prove almost sure weak convergence of the zero currents along the full sequence for $d>2$, and along sparse subsequences for $d \le 2$. On $(\C^*)^d$, the limiting potential is given by $\Phi_{P,f}(z)=I_{P,f}(\Log z)$, where $I_{P,f}$ is the Legendre--Fenchel transform of the profile over $P$ and $\Log (z)=(\log|z_1|,\dots,\log|z_d|)$. These results extend the exponential-profile mechanism of Kabluchko and Zaporozhets from one complex variable to the genuinely multivariate $P$-polynomial setting under relaxed probabilistic assumptions, directly connecting random zero hypersurfaces with convex-analytic data determined by $(P,f)$.\end{abstract}

\maketitle

%%%%%%%%%%%%%%%%%%%%%%%%%%%%%%%%%%%%%%%%%%%%%%%%%%%%%%%%%%%%%%%%%%%%%%%%%%%%%%%

\section{Introduction}

The asymptotic distribution of zeros of random holomorphic objects is a classical topic at the interface of probability, complex analysis, and geometry. In this paper we study random $P$-polynomials in $\C^d$ whose monomial supports lie in $nP\cap \Z_+^d$, where $P\subset \R_+^d$ is a convex body, and whose deterministic coefficient array admits a uniform exponential profile on $P$. Our aim is to identify the limiting logarithmic potential and the limiting zero current in terms of the deterministic data of the model, namely the support body $P$ and the coefficient profile.

The one-variable theory already contains two themes that are relevant to the present work. For classical Kac polynomials, Ibragimov and Zaporozhets \cite{IZ13} established the sharp logarithmic moment condition governing the asymptotic zero distribution. Kabluchko and Zaporozhets \cite{KZ14} later considered random analytic functions whose deterministic coefficients admit an exponential profile and proved, under the condition $\E[\log(1+|\xi_0|)]<\infty$, that the normalized zero measures converge in probability to a deterministic limit described by the Legendre--Fenchel transform of the profile. Thus, in this coefficient-driven setting, the limiting zero distribution is determined by the asymptotic behavior of the deterministic coefficients.

Another line of work concerns random sums built from polynomial or section families chosen in advance by geometric, orthogonal, or potential-theoretic considerations. In one complex variable, Dauvergne \cite{Dau21} obtained sharp results for random sums of orthogonal and, more generally, asymptotically minimal polynomial families on a compact set in $\C$. In geometric settings, Shiffman and Zelditch \cite{ShiffmanZelditch99} proved equidistribution of zeros of random holomorphic sections of high powers of a positive line bundle. In polynomial settings in several complex variables, weighted extremal functions, Bergman-type asymptotics, and pluripotential theory describe the corresponding limits; see, for example, \cite{BL15,Bay16,BBL24,BDL24,BG}. In these works, the random object is built from a preassigned family, such as a sequence of asymptotically minimal polynomials, an orthonormal basis of a polynomial space, or an orthonormal basis of a space of holomorphic sections (see \cite{BCHM} and references therein).

In several complex variables, sparsity of the monomial support brings an additional geometric feature into the asymptotic problem. When the support is restricted to $nP\cap \Z_+^d$, the convex body $P$ plays an essential role. In the Gaussian setting, Shiffman and Zelditch \cite{ShiffmanZelditch04Newton} showed that prescribed Newton polytope support leads to toric asymptotics and to the appearance of allowed and forbidden regions for zeros. Bayraktar \cite{Bay17} studied random sparse Laurent polynomials with prescribed polytope support for a broad class of non-Gaussian coefficient distributions. The corresponding deterministic framework is provided by $P$-pluripotential theory \cite{BBL18}, which provides the relevant tools such as growth classes, extremal functions, and equilibrium measures for this setting.

The present paper brings together the concepts of sparse support and prescribed exponential coefficient asymptotics. More precisely, the random object is built from a scaled sparse monomial array of the form $\{c_{\alpha,n}z^\alpha\}_{\alpha\in nP\cap \Z_+^d}$, where the deterministic coefficient array satisfies a uniform exponential profile $f$ on $P$ in the sense of \eqref{eq:profile}. What distinguishes this setting is that the asymptotic information is imposed directly on the coefficient array, rather than being derived from a chosen family arising from orthogonal or potential-theoretic constructions. At the same time, the framework is broad enough to include many structured arrays as special cases. In particular, it includes orthonormal monomial arrays arising from torus-invariant measures and torus-invariant weights. In this way, the ensemble is determined by the pair $(P,f)$, where $P$ fixes the support geometry and $f$ fixes the exponential scale of the coefficients. The corresponding limit is described by a toric plurisubharmonic function $\Phi_{P,f}$ naturally associated with $(P,f)$ and obtained explicitly from the Legendre--Fenchel transform of the profile on $P$. From this point of view, the model is close in spirit to the exponential-profile framework of \cite{KZ14}, while the limiting object belongs naturally to the setting of $P$-pluripotential theory.

Our first result identifies the limiting potential explicitly. Assuming the tail condition \eqref{eq:logtail} for $\log(1+|\xi_0|)$, we show that the normalized logarithmic modulus converges in probability to $\Phi_{P,f}$ in $L^1_{\mathrm{loc}}(\C^d)$. It follows that the normalized zero currents converge in probability to the closed positive current $dd^c\Phi_{P,f}$ (see Theorem~\ref{thm:potentials} and Corollary~\ref{cor:currents}). In dimension $d=1$, this convergence-in-probability result is obtained under a tail assumption which is strictly weaker than the logarithmic moment condition considered in \cite{KZ14}. Under the stronger logarithmic moment condition \eqref{eq:logmoment}, we also prove almost sure convergence of the normalized zero currents to $dd^c\Phi_{P,f}$ (see Theorem~\ref{thm:main}). This almost sure convergence of the normalized zero currents holds along the full sequence when $d>2$, and along suitable sparse subsequences when $d\le 2$.

We also show in Section~\ref{sec:torus-orthopoly} that the present framework includes a natural weighted orthogonal case as a specialization. More precisely, for the orthonormal monomial basis associated with a general torus-invariant weight, we verify the profile assumption \eqref{eq:profile}. In the special case $P=\Sigma$, our general construction recovers the toric extremal current arising in the weighted setting and the corresponding equidistribution result studied in \cite{Bay19}.

Thus, in the sparse multivariate setting considered here, the global asymptotic zero distribution is determined explicitly by the convex analytic data $(P,f)$. In this way, our results show that the profile-driven universality mechanism developed in one complex variable extends naturally to a genuinely multivariate sparse setting governed by Newton polytope geometry and $P$-pluripotential theory.

\subsection*{Random $P$-polynomials with exponential profiles}
Fix $d\ge 1$. Let $(\Omega,\mathcal F,\mathbb P)$ be a probability space and let
$(\xi_\alpha)_{\alpha\in\Z_+^d}$ be an i.i.d.\ family of complex-valued random variables on $\Omega$. Assume that $\xi_0$  is non-degenerate (i.e. not almost surely constant) and satisfies the tail condition
\begin{equation}\label{eq:logtail}
\mathbb P\big(\log(1+|\xi_0|)>t\big)=o(t^{-d})\quad \text{as } t\to\infty .
\end{equation}
We will also consider the stronger moment condition for our almost sure convergence results (see, e.g., Theorem~\ref{thm:main}) 
\begin{equation}\label{eq:logmoment}
\E\Big[\big(\log(1+|\xi_0|)\big)^d\Big] < \infty.
\end{equation}
It is easy to see that, by Markov's inequality, the moment condition \eqref{eq:logmoment} implies the tail bound \eqref{eq:logtail}. 

Let $P\subset \R_+^d=[0,\infty)^d$ be a convex body (i.e., compact, convex, and with nonempty interior) such that $0\in P$,
and write $nP:=\{nt:t\in P\}$ for $n\in\mathbb{N}$.
For each $n\in\mathbb{N}$ consider the random $P$-polynomial
\begin{equation}\label{eq:Gn}
\mathbf{P}_n(z,\omega)
:=\sum_{\alpha\in nP\cap\Z_+^d}\xi_\alpha(\omega)\,c_{\alpha,n}\,z^\alpha,
\qquad z\in\C^d,\qquad z^\alpha:=z_1^{\alpha_1}\cdots z_d^{\alpha_d}.
\end{equation}
We assume that the deterministic coefficients admit an asymptotic exponential profile on $P$, i.e.,
there exists a continuous function $f:P\to(0,\infty)$ such that
\begin{equation}\label{eq:profile}
\lim_{n\to\infty}\ \sup_{\alpha\in nP\cap\Z_+^d}\ \left|\frac1n\log|c_{\alpha,n}|-\log f(\alpha/n)\right|=0.
\end{equation}
Equivalently, if $u:=-\log f\in \mathscr{C}(P)$, then
\begin{equation*}
\frac{1}{n}\log|c_{\alpha,n}|=-u(\alpha/n)+o(1)
\qquad\text{as } n\to\infty,
\end{equation*}
uniformly for $\alpha\in nP\cap\Z_+^d$.
Since $\log f$ is finite on $P$, \eqref{eq:profile} implies that for all sufficiently large $n$,
\begin{equation*}
c_{\alpha,n}\neq 0\qquad\text{for every }\alpha\in nP\cap\Z_+^d.
\end{equation*}

For $s,t\in\R^d$ we denote by $\langle s,t\rangle:=\sum_{j=1}^d s_j t_j$ the Euclidean pairing.
Define the (restricted) Legendre--Fenchel transform of $u$ over $P$ by
\begin{equation}\label{eq:IP}
I_{P,f}(s):=\sup_{t\in P}\big(\langle s,t\rangle-u(t)\big)
          =\sup_{t\in P}\big(\langle s,t\rangle+\log f(t)\big),\qquad s\in\R^d.
\end{equation}
Since $P$ is compact and $u\in \mathscr{C}(P)$, the function $I_{P,f}$ is finite-valued, convex, and continuous on $\R^d$ (see Lemma \ref{lem:IPf-basic}).

Introduce the logarithmic map $\Log:(\C^*)^d\to\R^d$,
\begin{equation*}
\Log(z):=(\log|z_1|,\dots,\log|z_d|),\qquad z\in(\C^*)^d.
\end{equation*}
On the complex torus $(\C^*)^d$, define
\begin{equation}\label{eq:PhiPf-torus}
\Phi_{P,f}(z):=I_{P,f}(\Log(z))
=\sup_{t\in P}\big\{\langle \Log(z),t\rangle+\log f(t)\big\},
\qquad z\in(\C^*)^d,
\end{equation}
where $\langle \Log(z),t\rangle=\sum_{j=1}^d t_j\log|z_j|$.
To interpret the limiting object globally on $\C^d$, we extend $\Phi_{P,f}$ across the coordinate hyperplanes by
upper semicontinuous regularization:
\begin{equation}\label{eq:PhiPf}
\Phi_{P,f}(z):=\limsup_{\substack{\zeta\to z\\ \zeta\in(\C^*)^d}} I_{P,f}(\Log(\zeta)),
\qquad z\in\C^d.
\end{equation}
Then $\Phi_{P,f}\in\PSH(\C^d)$ and \eqref{eq:PhiPf-torus} holds on $(\C^*)^d$
(for details see Lemma~\ref{lem:PhiPf-psh}). In particular, $dd^c\Phi_{P,f}$ is a well-defined closed positive $(1,1)$-current on $\C^d$.

Now let $[Z_{\mathbf{P}_n}]$ be the current of integration over the zero hypersurface of $\mathbf{P}_n$ (counted with multiplicities). On the event $\{\mathbf{P}_n\equiv 0\}$ we set $[Z_{\mathbf{P}_n}]:=0$ and $\frac1n\log|\mathbf{P}_n|:=0$. This convention does not affect any of our limit statements, since $\mathbf{P}_n\equiv 0$ occurs only finitely many times almost surely (see Lemma~\ref{lem:eventual-nonzero}).

\subsection*{Main results}
We first prove the convergence of the normalized logarithmic modulus in $L^1_{\mathrm{loc}}(\C^d)$ \emph{in probability} under the tail assumption \eqref{eq:logtail}, and then deduce the convergence of the associated normalized zero currents. 

\begin{thm}\label{thm:potentials}
Assume \eqref{eq:profile} and \eqref{eq:logtail}. Then, in probability,
\begin{equation}\label{eq:L1prob}
   \frac{1}{n}\log|\mathbf{P}_n| \longrightarrow \Phi_{P,f}
\end{equation}
in \(L^1_{\mathrm{loc}}(\C^d)\) as \(n\to\infty\).
\end{thm}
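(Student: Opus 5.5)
The plan is the standard two-sided scheme for random polynomials: an upper bound that holds locally uniformly with high probability, a pointwise lower bound at a dense countable set with high probability, and then a compactness argument for plurisubharmonic functions. Throughout, write $u_n:=\frac1n\log|\mathbf{P}_n|$. The number of monomials is $N_n:=\#(nP\cap\Z_+^d)=O(n^d)$.

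\emph{Upper bound.} The tail condition \eqref{eq:logtail} gives, for every $\varepsilon>0$,
\[
\mathbb P\Big(\max_{\alpha\in nP\cap\Z_+^d}\log(1+|\xi_\alpha|)>\varepsilon n\Big)\le N_n\,\mathbb P\big(\log(1+|\xi_0|)>\varepsilon n\big)=O(n^d)\,o(n^{-d})\to 0 .
\]
This is exactly where the exponent $d$ in \eqref{eq:logtail} is used. On the complementary event, combining with \eqref{eq:profile}, we get for $z\in(\C^*)^d$
\[
u_n(z)\le \tfrac1n\log N_n+\varepsilon+o(1)+\max_{\alpha}\big(\langle \Log z,\alpha/n\rangle-u(\alpha/n)\big)\le I_{P,f}(\Log z)+2\varepsilon
\]
for large $n$. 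The same bound extends to all of $\C^d$ by continuity of $u_n$ and the definition \eqref{eq:PhiPf}. Hence, with probability tending to one, $u_n\le\Phi_{P,f}+2\varepsilon$ on every compact set. In particular the $u_n$ are locally uniformly bounded above with high probability.

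\emph{Lower bound at fixed points.} Fix $z\in(\C^*)^d$ and $\varepsilon>0$, and choose $t^*\in P$ attaining the supremum in \eqref{eq:IP}. By continuity of $u$ and convexity of $P$ there are lattice points $\alpha_n\in nP\cap\Z_+^d$ with $\alpha_n/n\to t^*$, so that $\frac1n\log|c_{\alpha_n,n}z^{\alpha_n}|\to I_{P,f}(\Log z)$. To bound $|\mathbf{P}_n(z)|$ from below I would condition on all $\xi_\beta$ with $\beta\ne\alpha_n$. Then $\mathbf{P}_n(z)=\xi_{\alpha_n}a+b$ with $a=c_{\alpha_n,n}z^{\alpha_n}$ and $b$ independent of $\xi_{\alpha_n}$, and
\[
\mathbb P\big(|\xi_{\alpha_n}a+b|<e^{-\varepsilon n}|a|\big)\le \sup_{w\in\C}\mathbb P\big(|\xi_0-w|<e^{-\varepsilon n}\big)=:Q(e^{-\varepsilon n}).
\]
Since $\xi_0$ is non-degenerate, $Q(r)\to$ the largest atom mass $p<1$ as $r\to0$, so this only gives probability bounded away from $1$ rather than tending to zero. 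To fix this I would use $k$ distinct near-maximizing indices $\alpha_n^{(1)},\dots,\alpha_n^{(k)}$, all within $o(n)$ of $nt^*$, and use the fact that the event $\{|\mathbf{P}_n(z)|<e^{-2\varepsilon n}|a|\}$ forces a small-ball event for each $\xi_{\alpha^{(j)}}$ given the others. A cleaner route is to pass to the $k$-dimensional linear form in $(\xi_{\alpha^{(j)}})_j$ and apply a Kolmogorov–Rogozin type concentration inequality, which gives probability $O(k^{-1/2})$. Letting $k\to\infty$ slowly yields $\mathbb P(u_n(z)<\Phi_{P,f}(z)-3\varepsilon)\to0$. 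This small-ball step is the main obstacle: it is where non-degeneracy enters and where the argument must avoid any moment assumption. The points $z$ are taken from a countable dense subset of $(\C^*)^d$.

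\emph{Conclusion.} Suppose convergence in probability in $L^1_{\rm loc}$ fails. Then there are a subsequence and $\delta>0$ with $\mathbb P(\|u_n-\Phi_{P,f}\|_{L^1(K)}>\delta)\ge\delta$. By a diagonal argument, the two previous steps give a further subsequence along which, almost surely, $\limsup u_n\le\Phi_{P,f}$ locally uniformly and $\liminf u_n(z_j)\ge\Phi_{P,f}(z_j)$ for every $z_j$ in the dense set. By Hartogs' lemma and compactness of psh families bounded above (using also that $u_n(z_1)$ is bounded below, which rules out $u_n\to-\infty$), every $L^1_{\rm loc}$ limit $v$ of this subsequence satisfies $v\le\Phi_{P,f}$ everywhere. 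At the points $z_j$, Hartogs gives $v(z_j)\ge\limsup u_n(z_j)\ge\Phi_{P,f}(z_j)$. Two psh functions with $v\le\Phi_{P,f}$, where $\Phi_{P,f}$ is continuous on $(\C^*)^d$ and $v=\Phi_{P,f}$ on a dense set, coincide: the set $\{v<\Phi_{P,f}-\eta\}$ is open up to a polar set and has positive measure if nonempty, and the sub-mean value property at the $z_j$ then forces equality. Hence $u_n\to\Phi_{P,f}$ in $L^1_{\rm loc}$ almost surely along the sub-subsequence, contradicting the choice of $\delta$.
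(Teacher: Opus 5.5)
Your proof is correct. Its two probabilistic inputs are the same as the paper's: a union bound against the $o(t^{-d})$ tail for the upper bound, and Kolmogorov--Rogozin applied to many near-maximizing indices for the lower bound. The paper uses all $\ge c_U n^d$ indices with $\alpha/n$ in a neighborhood $U$ of the maximizer, which gives the explicit rate $C'n^{-d/2}$ in \eqref{eq:lower-prob-rev}; your $O(k^{-1/2})$ with $k\to\infty$ is enough for what you need.

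You genuinely differ in the upgrade to $L^1_{\rm loc}$. The paper proceeds as follows:
\begin{itemize}
\item it extracts a sparse sub-subsequence $m_j\ge j^3$ (Lemma~\ref{lem:good-subsequence}), so that $m_j^{-d/2}$ is summable;
\item it gets almost sure convergence at \emph{every} fixed $z\in(\C^*)^d$;
\item it swaps quantifiers by Fubini (Lemma~\ref{lem:fubini-swap1}) to obtain, for a.e.\ $\omega$, convergence at Lebesgue-a.e.\ $z$;
\item it concludes with Corollary~\ref{cor:psh-ae-to-L1}, which needs only the crude local bound $\sup_K u_{m_j}\le I_{P,f}(S_K)+o(1)$.
\end{itemize}
You instead observe that the upper bound is global and sharp: $u_n\le\Phi_{P,f}+2\varepsilon$ on all of $\C^d$ on the good event. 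Lower bounds at a countable dense set of points, combined with Hartogs' lemma, then identify every subsequential limit.

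Your route avoids joint measurability and Fubini. It also needs no quantitative lower-tail rate, since a diagonal extraction over countably many pairs $(z_j,\varepsilon)$ makes any $o(1)$ bound summable. The paper's route gets by with a weaker one-sided bound, and it produces the rate $n^{-d/2}$ that is reused for Theorem~\ref{thm:main}.

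One small point: your justification that $v=\Phi_{P,f}$ is more roundabout than necessary. On $(\C^*)^d$ the function $v-\Phi_{P,f}$ is upper semicontinuous, because $\Phi_{P,f}$ is continuous there. Hence $v(z)-\Phi_{P,f}(z)\ge\limsup_{z_j\to z}\bigl(v(z_j)-\Phi_{P,f}(z_j)\bigr)=0$ at every point of the torus. So $v=\Phi_{P,f}$ on $(\C^*)^d$, hence a.e., hence everywhere by Lemma~\ref{lem:psh-tools}(ii). No polar-set or sub-mean-value argument is needed.
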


As a consequence, we obtain the following convergence result for the normalized zero currents.

\begin{cor}\label{cor:currents}
Assume \eqref{eq:profile} and \eqref{eq:logtail}. Then the normalized zero currents $\frac1n[Z_{\mathbf{P}_n}]$
converge to \(dd^c\Phi_{P,f}\) in each of the following senses:
\begin{itemize}
\item[(i)] \emph{(Almost sure convergence along further subsequences)}
For every subsequence \((n_k)_{k\ge1}\), there exists a further subsequence \((n_{k_j})_{j\ge1}\) such that, as \(j\to\infty\),
\[
\frac{1}{n_{k_j}}[Z_{\mathbf{P}_{n_{k_j}}}] \longrightarrow dd^c\Phi_{P,f}
\qquad\text{almost surely in the weak sense of currents}.
\]

\item[(ii)] \emph{(Convergence in probability in the weak topology of currents)}
For every open neighborhood \(\mathcal{O}\subset\mathcal{D}'^{\,1,1}(\C^d)\) of \(dd^c\Phi_{P,f}\) for the weak topology of currents,
\[
\mathbb{P}\!\left(\frac1n[Z_{\mathbf{P}_n}]\in\mathcal{O}\right)\xrightarrow[n\to\infty]{}1.
\]
\end{itemize}
\end{cor}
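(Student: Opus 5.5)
The corollary follows from Theorem~\ref{thm:potentials} together with the continuity of $dd^c$ as an operator from $L^1_{\mathrm{loc}}(\C^d)$ to currents. Let $u_n:=\frac1n\log|\mathbf{P}_n|$. On the event $\{\mathbf{P}_n\not\equiv 0\}$, the Poincar\'e--Lelong formula gives $\frac1n[Z_{\mathbf{P}_n}]=dd^c u_n$. With the stated conventions, both sides vanish on $\{\mathbf{P}_n\equiv0\}$, so the identity $\frac1n[Z_{\mathbf{P}_n}]=dd^c u_n$ holds for every $\omega$. Moreover, if $v_k\to v$ in $L^1_{\mathrm{loc}}(\C^d)$, then for every test form $\varphi$ of bidegree $(d-1,d-1)$ we have
\[
\langle dd^c v_k,\varphi\rangle=\int v_k\, dd^c\varphi\longrightarrow\int v\, dd^c\varphi=\langle dd^c v,\varphi\rangle .
\]
Since $dd^c\varphi$ is smooth and compactly supported, this is immediate. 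Thus $dd^c$ is sequentially continuous from $L^1_{\mathrm{loc}}$ into $\mathcal D'^{\,1,1}$ with the weak topology.

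For (i), I will use the standard characterization of convergence in probability in a metric space. Fix an exhaustion by compact sets $K_m$ and the metric $\rho(v,w)=\sum_m 2^{-m}\min(1,\|v-w\|_{L^1(K_m)})$, which metrizes $L^1_{\mathrm{loc}}(\C^d)$. Theorem~\ref{thm:potentials} says $\rho(u_n,\Phi_{P,f})\to0$ in probability. Given a subsequence $(n_k)$, choose $k_j$ such that $\mathbb P(\rho(u_{n_{k_j}},\Phi_{P,f})>2^{-j})<2^{-j}$. By Borel--Cantelli, almost surely $\rho(u_{n_{k_j}},\Phi_{P,f})\to 0$, that is, $u_{n_{k_j}}\to\Phi_{P,f}$ in $L^1_{\mathrm{loc}}$. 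Applying the continuity of $dd^c$ pointwise in $\omega$ yields almost sure weak convergence of $\frac{1}{n_{k_j}}[Z_{\mathbf{P}_{n_{k_j}}}]$ to $dd^c\Phi_{P,f}$. Measurability is not an issue, because for each fixed test form the pairing $\int u_n\,dd^c\varphi$ is a random variable.

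For (ii), I argue by contradiction using (i). If (ii) failed, there would be an open $\mathcal O\ni dd^c\Phi_{P,f}$, some $\varepsilon>0$ and a subsequence $(n_k)$ with $\mathbb P(\frac1{n_k}[Z_{\mathbf{P}_{n_k}}]\notin\mathcal O)\ge\varepsilon$ for all $k$. By (i), a further subsequence converges almost surely, hence almost surely lies eventually in $\mathcal O$. Dominated convergence applied to the indicators then gives $\mathbb P(\frac1{n_{k_j}}[Z_{\mathbf{P}_{n_{k_j}}}]\notin\mathcal O)\to0$, which is a contradiction.

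The only delicate step is the topological one. The weak topology on $\mathcal D'^{\,1,1}$ is not metrizable, so I must justify that almost sure sequential convergence implies the events $\{\frac1n[Z_{\mathbf{P}_n}]\in\mathcal O\}$ are eventually realized. To avoid this, I would shrink $\mathcal O$ to a basic neighborhood $\{T:|\langle T-dd^c\Phi_{P,f},\varphi_i\rangle|<\delta,\ i=1,\dots,N\}$. Such a set is defined by finitely many real random variables, so the events are measurable, and convergence along the sequence does imply eventual membership. Alternatively, one can bypass (i) here and prove (ii) directly. On such a basic neighborhood,
\[
\bigl|\langle \tfrac1n[Z_{\mathbf{P}_n}]-dd^c\Phi_{P,f},\varphi_i\rangle\bigr|\le C_{\varphi_i}\,\|u_n-\Phi_{P,f}\|_{L^1(\operatorname{supp}\varphi_i)},
\]
so Theorem~\ref{thm:potentials} gives the claim at once.
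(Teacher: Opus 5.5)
Your proposal is correct and follows the paper's proof. It uses the Poincar\'e--Lelong identity $dd^c u_n=\frac1n[Z_{\mathbf P_n}]$ together with the convention on $\{\mathbf P_n\equiv0\}$. Part (i) comes from the subsequence principle in $L^1_{\mathrm{loc}}$ plus continuity of $dd^c$. Your direct basic-neighborhood estimate for part (ii) is a concrete form of the paper's observation that $(dd^c)^{-1}(\mathcal O)$ is an open neighborhood of $\Phi_{P,f}$ in $L^1_{\mathrm{loc}}$.

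One small correction: non-metrizability is not the issue. A sequence converging in any topology is eventually in each open neighborhood. The only genuine subtlety is measurability of $\{\frac1n[Z_{\mathbf P_n}]\in\mathcal O\}$, and your reduction to basic neighborhoods already handles it.
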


\medskip

Our next result establishes almost sure convergence of the normalized zero currents under the stronger logarithmic moment condition \eqref{eq:logmoment}.

\begin{thm}\label{thm:main}
Assume \eqref{eq:profile} and \eqref{eq:logmoment}.
\begin{enumerate}[(i)]
\item For every deterministic increasing sequence $(\ell_k)_{k\ge1}$ such that $\sum_{k=1}^\infty \ell_k^{-d/2}<\infty$
(for instance, $\ell_k\ge k^3$), we have, as $k\to\infty$,
\begin{equation}
\frac{1}{\ell_k}[Z_{\mathbf{P}_{\ell_k}}] \longrightarrow dd^c\Phi_{P,f}
\qquad\text{almost surely in the weak sense of currents}.
\end{equation}

\item If, in addition, $d>2$, then, as $n\to\infty$,
\begin{equation}\label{eq:main-as}
\frac{1}{n}[Z_{\mathbf{P}_n}] \longrightarrow dd^c\Phi_{P,f}
\qquad\text{almost surely in the weak sense of currents}.
\end{equation}
\end{enumerate}
\end{thm}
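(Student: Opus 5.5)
We reduce Theorem~\ref{thm:main} to almost sure $L^1_{\rm loc}$ convergence of the potentials $\frac1{\ell_k}\log|\mathbf{P}_{\ell_k}|$ to $\Phi_{P,f}$. Since $dd^c$ is continuous from $L^1_{\rm loc}$ to currents, weak convergence of the zero currents then follows exactly as in Corollary~\ref{cor:currents}. To get this convergence we use the standard compactness principle for plurisubharmonic functions (Hartogs' lemma together with $L^1_{\rm loc}$-precompactness of locally uniformly bounded above families in $\PSH(\C^d)$). It suffices to prove, almost surely along the relevant sequence, two facts. The first is a uniform \emph{upper bound}: for every compact $K\subset\C^d$ and $\varepsilon>0$, eventually $\frac1n\log|\mathbf{P}_n|\le \Phi_{P,f}+\varepsilon$ on $K$. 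The second is a pointwise \emph{lower bound}: for each $z$ in a fixed countable dense subset of $(\C^*)^d$, $\liminf \frac1n\log|\mathbf{P}_n(z)|\ge \Phi_{P,f}(z)$. Continuity of $\Phi_{P,f}$ on $(\C^*)^d$ together with Hartogs' lemma then forces every $L^1_{\rm loc}$ limit point to equal $\Phi_{P,f}$.

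\emph{Upper bound.} By the triangle inequality and \eqref{eq:profile},
\[
\tfrac1n\log|\mathbf{P}_n(z)|\le \tfrac1n\log\#(nP\cap\Z^d_+)+\max_{\alpha\in nP}\Big(\tfrac1n\log(1+|\xi_\alpha|)\Big)+\max_{\alpha}\big(\langle \Log z,\alpha/n\rangle+\log f(\alpha/n)\big)+o(1).
\]
The last term is at most $I_{P,f}(\Log z)$, with a standard adjustment near coordinate hyperplanes via \eqref{eq:PhiPf}. The difficulty is the middle term, so we need $\max_{|\alpha|\lesssim n}\log(1+|\xi_\alpha|)=o(n)$ almost surely. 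The array $\{\xi_\alpha:\alpha\in nP\}$ is nested in $n$, so this is a statement about a single i.i.d.\ family indexed by $\Z^d_+$. By Borel--Cantelli it suffices that
\[
\sum_\alpha \mathbb P\big(\log(1+|\xi_\alpha|)>\varepsilon|\alpha|\big)<\infty .
\]
Grouping by $|\alpha|=m$ (about $m^{d-1}$ terms each), this sum is comparable to $\sum_m m^{d-1}\mathbb P(\log(1+|\xi_0|)>\varepsilon m)$, which is finite exactly under \eqref{eq:logmoment}. Hence the upper bound holds along the \emph{full} sequence, for every $d$.

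\emph{Lower bound.} This is the main obstacle, and it is where the dichotomy between $d>2$ and $d\le2$ enters. Fix $z\in(\C^*)^d$ and pick $t^*\in P$ nearly attaining the supremum in $I_{P,f}(\Log z)$. Let $A_n$ be the set of lattice points $\alpha\in nP$ with $\alpha/n$ within $\delta$ of $t^*$; then $\#A_n\asymp n^d$. Condition on all coefficients except those in $A_n$. The polynomial is then $X_n+\sum_{\alpha\in A_n}\xi_\alpha c_{\alpha,n}z^\alpha$, where $X_n$ is independent of the $\xi_\alpha$ with $\alpha\in A_n$ and each weight satisfies $|c_{\alpha,n}z^\alpha|\ge e^{n(\Phi_{P,f}(z)-\eta)}$. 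We need a small-ball (anti-concentration) estimate: for a nondegenerate i.i.d.\ sum with $N$ terms whose weights are all at least $w$, Kolmogorov--Rogozin gives
\[
\sup_x \mathbb P\big(|S-x|<w\big)\le C N^{-1/2}.
\]
With $N\asymp n^d$ this yields
\[
\mathbb P\big(\tfrac1n\log|\mathbf{P}_n(z)|<\Phi_{P,f}(z)-\eta\big)\le C n^{-d/2}.
\]
This bound is summable over the whole sequence exactly when $d>2$, and along $(\ell_k)$ whenever $\sum\ell_k^{-d/2}<\infty$. Borel--Cantelli then gives the pointwise lower bound almost surely at each point of the countable dense set, which proves (i) and (ii). The delicate point I expect to require care is applying Kolmogorov--Rogozin to complex variables with unequal complex weights $c_{\alpha,n}z^\alpha$. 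I would handle this by projecting onto a real direction in which $\xi_0$ is nondegenerate. I would also restrict to a subfamily of $A_n$ whose weights have arguments in a narrow sector, or else use the Esseen-type concentration inequality for sums with weights bounded below in modulus. Either route should preserve the $N^{-1/2}$ rate up to constants.
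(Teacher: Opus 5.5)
Your proposal is correct. Its probabilistic core is the same as the paper's. The upper bound comes from grouping multi-indices by $|\alpha|=m$ and applying Borel--Cantelli under \eqref{eq:logmoment} (Lemma~\ref{lem:multiindex-subexp}). The lower bound is the Kolmogorov--Rogozin estimate $O(n^{-d/2})$, summed along $(\ell_k)$, or along all $n$ when $d>2$.

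\textbf{Where your route differs: the passage from pointwise to $L^1_{\rm loc}$ convergence.}
\begin{itemize}
\item \emph{The paper's route.} It proves a.s.\ convergence at every fixed $z\in(\C^*)^d$. It then uses the Tonelli swap of Lemma~\ref{lem:fubini-swap1} to obtain a single full-measure event on which convergence holds at Lebesgue-a.e.\ $z$. Finally it applies Corollary~\ref{cor:psh-ae-to-L1}, which needs only local upper boundedness (Lemma~\ref{lem:upper-bounds-compacts}) and a psh limit $\not\equiv-\infty$.
\item \emph{Your route.} You use only a countable dense set $D\subset(\C^*)^d$. You compensate with a sharper bound $u_n\le\Phi_{P,f}+\varepsilon$ that holds simultaneously for all $z$. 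This bound does follow from the same coefficient estimate. It extends across the coordinate hyperplanes because $\log|\mathbf P_n|$ is continuous with values in $[-\infty,\infty)$, while $\Phi_{P,f}$ there is the $\limsup$ from the torus.
\item \emph{Identifying a limit point $v$.} The sub-mean value inequality gives $v\le\Phi_{P,f}$. It also gives $\limsup_j u_{n_j}(z)\le v(z)$ for every $z$, so $v\ge\Phi_{P,f}$ on $D$. The step hidden in your phrase ``continuity plus Hartogs'' is this: $v-\Phi_{P,f}$ is upper semicontinuous on $(\C^*)^d$, so $\{v\ge\Phi_{P,f}\}$ is closed. Since it contains $D$, it is all of $(\C^*)^d$. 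Hence $v=\Phi_{P,f}$ a.e., and therefore everywhere. Write this step out explicitly.
\item \emph{What each route buys.} Yours avoids joint measurability and Fubini, and needs only countably many Borel--Cantelli events. The paper's does not use continuity of the limit, only that it is psh and not identically $-\infty$.
\end{itemize}

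\textbf{On the ``delicate point''.} No projection or sector restriction is needed. Theorem~\ref{thm:KR} is stated for complex variables, with concentration measured on disks. Since $\mathcal Q(aX;r)=\mathcal Q(X;r/|a|)$ for every $a\in\C\setminus\{0\}$, the arguments of the weights $c_{\alpha,n}z^\alpha$ play no role.

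The real subtlety is the radius. Nondegeneracy only gives $\mathcal Q(\xi_0;r_0)<1$ for some $r_0>0$, possibly with $r_0<1$. So applying Kolmogorov--Rogozin directly at radius $w$ (the lower bound on the weights) can give nothing. The paper fixes this by normalizing the weights to be $\ge 1$ and using radius $r_n=e^{-\eta n}\le r_0$, which costs one extra $\eta$ in the exponent. Your bound at radius $w$ is still true, but you must derive it from radius $r_0w$ using the doubling property of $\mathcal Q$.
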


\medskip

In the one-dimensional case \(d=1\), the preceding results admit a particularly concrete interpretation. Since \(P\subset \R_{+}\) is then a compact interval containing \(0\), the random \(P\)-polynomials \(\mathbf P_n\) are ordinary one-variable random polynomials whose degree grows linearly with \(n\). Moreover, \(\Phi_{P,f}\) is a subharmonic function on \(\C\), and \(dd^c\Phi_{P,f}\) is a positive measure on \(\C\). Accordingly, \([Z_{\mathbf P_n}]\) is the zero counting measure of \(\mathbf P_n\), and the convergence of normalized zero currents becomes the convergence of normalized zero counting measures toward the deterministic limiting measure \(dd^c\Phi_{P,f}\). In particular, Theorem~\ref{thm:potentials} yields convergence in probability of the normalized logarithmic potentials, Corollary~\ref{cor:currents} gives convergence in probability of the normalized zero counting measures, and Theorem~\ref{thm:main}(i) yields almost sure convergence along deterministic sparse subsequences under the stronger logarithmic moment condition \eqref{eq:logmoment}.

\subsection*{Organization}
Section~\ref{sec:background} collects the deterministic and probabilistic preliminaries used throughout the paper.
In Section~\ref{sec:convprob}, we prove the convergence-in-probability statements, specifically Theorem~\ref{thm:potentials} and Corollary~\ref{cor:currents}.
In Section~\ref{sec:conv-as}, we prove the almost sure convergence statements of Theorem~\ref{thm:main}.
Finally, in Section~\ref{sec:torus-orthopoly}, we verify the profile assumption \eqref{eq:profile} for orthonormal monomials associated with torus-invariant weighted measures. By specializing this to the standard simplex $P=\Sigma$, we recover the limiting zero current for the toric setting originally established in \cite{Bay19}.
\section{Background}\label{sec:background}

We collect deterministic and probabilistic ingredients used throughout the paper. On the deterministic side, we recall
the convex-analytic data attached to a convex body $P\subset\R_+^d$ and a continuous profile $f$, and the associated
toric plurisubharmonic potential $\Phi_{P,f}$ on $\C^d$. On the probabilistic side, we recall the notion of convergence in probability used later, together with concentration
functions and the Kolmogorov--Rogozin small-ball inequality.

% ------------------------------------------------------------
\subsection{Deterministic preliminaries}\label{subsec:PhiPf-background}

Let $\R_+^d:=[0,\infty)^d$ and let $P\subset\R_+^d$ be a convex body (i.e., $P$ is compact, convex, and has nonempty interior).
Throughout, we assume that $0\in P$. For $n\in\mathbb{N}$ we write $nP:=\{nt:\ t\in P\}$ for the dilates of $P$ and consider the lattice set $nP\cap\Z_+^d$,
where $\Z_+^d:=\{0,1,2,\dots\}^d$. 

For the sake of completeness, we state and prove the following lemma on the asymptotic growth of the cardinality of the lattice set, which will be used later.

\begin{lem}\label{lem:lattice-growth}
There exist constants $c_P,C_P>0$ and $n_P\in\mathbb{N}$ such that for all $n\ge n_P$,
\begin{equation*}
c_P\,n^d \ \le\ |nP\cap\Z_+^d|\ \le\ C_P\,n^d.
\end{equation*}
In particular, $\frac1n\log|nP\cap\Z_+^d|\to 0$ as $n\to\infty$.
\end{lem}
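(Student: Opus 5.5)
The plan is to compare lattice points with unit cubes. For each $\alpha\in\Z_+^d$, let $Q_\alpha:=\alpha+[0,1)^d$. These cubes are pairwise disjoint and each has volume one. We count lattice points by comparing the union of the relevant cubes with slightly shrunken or enlarged copies of $nP$. Both directions rely only on the facts that $P$ is compact, has nonempty interior, and lies in $\R_+^d$.

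For the upper bound, since $P$ is compact it lies in a box $[0,R]^d$ with $R>0$. Every $\alpha\in nP\cap\Z_+^d$ then satisfies $0\le\alpha_j\le nR$. Hence
\[
|nP\cap\Z_+^d|\le(\lfloor nR\rfloor+1)^d\le(nR+1)^d\le (R+1)^d n^d
\]
for $n\ge1$, and we may take $C_P:=(R+1)^d$. Alternatively, the cubes $Q_\alpha$ for these $\alpha$ all lie in the Minkowski sum $nP+[0,1)^d$. Its volume is a polynomial in $n$ of degree $d$ with leading coefficient $\mathrm{vol}(P)$, which gives an asymptotically sharper constant. The box bound is enough for our purposes.

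For the lower bound we use the nonempty interior. Choose a closed cube $x_0+[0,r]^d\subset P$ with $r>0$; this is possible because $P$ has an interior point. Since $P\subset\R_+^d$, the corner $x_0$ lies in $\R_+^d$. Scaling gives $nx_0+[0,nr]^d\subset nP$. Along each coordinate, the interval $[nx_{0,j},nx_{0,j}+nr]$ contains at least $\lfloor nr\rfloor$ integers, and these integers are nonnegative. Therefore $|nP\cap\Z_+^d|\ge\lfloor nr\rfloor^d\ge (nr/2)^d$ once $nr\ge2$. This gives $c_P:=(r/2)^d$ and $n_P:=\lceil 2/r\rceil$.

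Finally, taking logarithms gives $\frac1n\log|nP\cap\Z_+^d|\in\big[\frac{\log c_P+d\log n}{n},\frac{\log C_P+d\log n}{n}\big]$. Both endpoints tend to $0$ because $\frac{\log n}{n}\to0$.

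The only step needing any care is locating an axis-parallel cube inside $P$ whose lattice points are nonnegative. This is immediate from the interior assumption together with $P\subset\R_+^d$, so I do not expect a real obstacle.
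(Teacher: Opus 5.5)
Your proof is correct and follows the paper's argument: you use a bounding box $[0,R]^d$ for the upper bound, scale an inscribed axis-parallel cube $x_0+[0,r]^d\subset P$ by $n$ for the lower bound, and use $\frac{\log n}{n}\to 0$ for the final claim. Your per-coordinate count of at least $\lfloor nr\rfloor$ integers is valid and slightly sharper than the paper's $\lfloor an\rfloor-1$. The unit-cube and Minkowski-sum framing in your opening paragraph is never actually used.
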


\begin{proof}
Since $P$ is compact, there exists $M>0$ such that $P\subset[0,M]^d$, hence
\begin{equation*}
|nP\cap\Z_+^d|
\le |[0,nM]^d\cap\Z^d|
\le (\lfloor nM\rfloor+1)^d \le C_P n^d.
\end{equation*}
For the lower bound, choose $t^{(0)}\in \mathrm{int}(P)$. Since $\mathrm{int}(P)$ is open, there exists $a>0$ such that
$t^{(0)}+[0,a]^d\subset P$, hence $nt^{(0)}+[0,an]^d\subset nP$.
For any $x\in\R$ and $L\ge 2$, the interval $[x,x+L]$ contains at least $\lfloor L\rfloor-1$ integers; applying this
coordinatewise yields
\begin{equation*}
|(nt^{(0)}+[0,an]^d)\cap\Z^d|
\ge (\lfloor an\rfloor-1)^d \ge c_P n^d
\end{equation*}
for all sufficiently large $n$. The final claim follows since $\log(n^d)/n\to 0$.
\end{proof}

\subsubsection*{Canonical potential}
For $s,t\in\R^d$ we write $\langle s,t\rangle:=\sum_{j=1}^d s_j t_j$. The \emph{support function of $P$} is given by
\begin{equation*}
h_P(s):=\sup_{t\in P}\langle s,t\rangle,\qquad s\in\R^d,
\end{equation*}
which is finite and convex. Moreover, $h_P$ uniquely determines $P$ via the supporting half-space representation
\begin{equation*}
P=\big\{t\in\R^d:\ \langle s,t\rangle\le h_P(s)\ \text{for all } s\in\R^d\big\},
\end{equation*}
a direct consequence of the separation theorem for closed convex sets (see, e.g., \cite[Ch.~11]{Rockafellar70}). 

On $(\C^*)^d$ we use the logarithmic map $\Log:(\C^*)^d\to\R^d$, given by
\begin{equation*}
\Log(z):=(\log|z_1|,\dots,\log|z_d|)\in\R^d,
\end{equation*}
and define the \emph{logarithmic indicator function} associated with $P$ by
\begin{equation}\label{eq:HP}
H_P(z):=h_P(\Log(z))
=\sup_{t\in P}\sum_{j=1}^d t_j\log|z_j|,
\qquad z\in(\C^*)^d.
\end{equation}
We extend $H_P$ to $\C^d$ by its upper semicontinuous regularization, using the convention $\log 0=-\infty$.
Since $0\in P$, the choice $t=0$ shows that $H_P\ge 0$ on $(\C^*)^d$, hence also on $\C^d$.

A function on $(\C^*)^d$ is called \emph{$d$-circled} (or \emph{toric}) if it depends only on $(|z_1|,\dots,|z_d|)$; in particular, $H_P$ is $d$-circled.

\begin{example}\label{rem:HP-examples}
For the standard simplex $\Sigma=\{t\in\R_+^d:\ \sum_{j=1}^d t_j\le1\}$, one has
\begin{equation*}
H_\Sigma(z)=\max\{0,\log|z_1|,\dots,\log|z_d|\}=\max_{1\le j\le d}\log^+|z_j|.
\end{equation*}
For the cube $Q=[0,1]^d$, one has $H_Q(z)=\sum_{j=1}^d \log^+|z_j|$. If $P$ is a non-degenerate convex polytope (that is, the convex hull of a finite subset of $\mathbb{Z}^d_+$ in $\mathbb{R}^d_+$ with $\mathrm{int}(P)\neq \emptyset$), then $h_P(s)=\max_{v\in\mathrm{Vert}(P)}\langle s,v\rangle$ and hence on $(\C^*)^d$,
\begin{equation*}
H_P(z)=\max_{v\in\mathrm{Vert}(P)}\sum_{j=1}^d v_j\log|z_j|,
\end{equation*}
with the extension to $\C^d$ obtained by upper semicontinuous regularization.
\end{example}

\medskip

Now let $f:P\to(0,\infty)$ be continuous and set $u:=-\log f\in \mathscr{C}(P)$.
The \emph{restricted Legendre--Fenchel transform} of $u$ over $P$ is given by
\begin{equation}\label{eq:IP-background}
I_{P,f}(s):=\sup_{t\in P}\big(\langle s,t\rangle-u(t)\big)
=\sup_{t\in P}\big(\langle s,t\rangle+\log f(t)\big),
\qquad s\in\R^d.
\end{equation}

\begin{lem}\label{lem:IPf-basic}
The function $I_{P,f}:\R^d\to\R$ is finite-valued, convex, and globally Lipschitz. For each $s\in\R^d$, the supremum in
\eqref{eq:IP-background} is attained. Moreover,
\begin{equation}\label{eq:IP-vs-hP}
h_P(s)+\inf_{P}\log f\ \le\ I_{P,f}(s)\ \le\ h_P(s)+\sup_{P}\log f,
\qquad s\in\R^d,
\end{equation}
hence $I_{P,f}-h_P$ is bounded on $\R^d$, and in particular $I_{P,1}=h_P$.
\end{lem}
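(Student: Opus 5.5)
The plan is to argue directly from the definition \eqref{eq:IP-background}, using only that $P$ is compact and nonempty and that $u=-\log f$ is continuous on $P$. Since $f$ is continuous and positive on the compact set $P$, both $m:=\inf_P\log f$ and $M:=\sup_P\log f$ are finite and attained. For fixed $s\in\R^d$, the map $t\mapsto \langle s,t\rangle+\log f(t)$ is continuous on $P$. The supremum in \eqref{eq:IP-background} is therefore attained by Weierstrass' theorem, and in particular it is finite.

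Next I will prove the two-sided bound \eqref{eq:IP-vs-hP}. For every $t\in P$ we have
\[
\langle s,t\rangle+m\le \langle s,t\rangle+\log f(t)\le \langle s,t\rangle+M .
\]
Taking the supremum over $t\in P$ gives $h_P(s)+m\le I_{P,f}(s)\le h_P(s)+M$. Consequently $|I_{P,f}-h_P|\le \max(|m|,|M|)$ on $\R^d$. When $f\equiv1$ we have $\log f\equiv 0$, so the definitions coincide and $I_{P,1}=h_P$.

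Convexity follows because $I_{P,f}$ is a pointwise supremum of the affine functions $s\mapsto\langle s,t\rangle-u(t)$, $t\in P$. Being finite everywhere, it is a convex function on $\R^d$.

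For the global Lipschitz property, set $R:=\max_{t\in P}|t|<\infty$. For each $t\in P$ and all $s,s'\in\R^d$,
\[
\langle s,t\rangle-u(t)\le \langle s',t\rangle-u(t)+R|s-s'|\le I_{P,f}(s')+R|s-s'|.
\]
Taking the supremum over $t$ yields $I_{P,f}(s)\le I_{P,f}(s')+R|s-s'|$. Exchanging the roles of $s$ and $s'$ shows that $I_{P,f}$ is $R$-Lipschitz, and hence continuous.

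None of these steps poses a genuine obstacle. The only point needing care is recording that the Lipschitz constant comes from the boundedness of $P$, since $I_{P,f}$ is a supremum of affine functions whose slopes $t$ all lie in $P$.
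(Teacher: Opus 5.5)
Your proposal is correct and follows essentially the same route as the paper. Both arguments use Weierstrass on the compact set $P$ for attainment and finiteness, the pointwise bounds $\inf_P\log f\le\log f\le\sup_P\log f$ for \eqref{eq:IP-vs-hP}, the supremum-of-affine-functions argument for convexity, and the bound $R=\max_{t\in P}|t|$ on the slopes for the Lipschitz estimate.
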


\begin{proof}
Since $\log f\in \mathscr{C}(P)$ and $P$ is compact, $m_f:=\inf_{P}\log f$ and $M_f:=\sup_{P}\log f$ are finite.
For fixed $s\in\R^d$, the map $t\mapsto \langle s,t\rangle+\log f(t)$ is continuous on $P$, hence attains its maximum, which
gives finiteness of $I_{P,f}(s)$ and attainment of the supremum.

Convexity follows from the fact that for each $t\in P$ the function $s\mapsto \langle s,t\rangle+\log f(t)$ is affine, and $I_{P,f}$ is the
pointwise supremum of these affine functions.

For the Lipschitz bound (with respect to the Euclidean norm), set $R_P:=\sup_{t\in P}\|t\|<\infty$. Then for any $s,s'\in\R^d$,
\begin{equation*}
I_{P,f}(s)-I_{P,f}(s')
=\sup_{t\in P}\big(\langle s,t\rangle+\log f(t)\big)-\sup_{t\in P}\big(\langle s',t\rangle+\log f(t)\big)
\le \sup_{t\in P}\langle s-s',t\rangle
\le R_P\|s-s'\|.
\end{equation*}
Exchanging $s$ and $s'$ yields $|I_{P,f}(s)-I_{P,f}(s')|\le R_P\|s-s'\|$.

Finally, $m_f\le \log f(t)\le M_f$ for all $t\in P$ implies
\begin{equation*}
\sup_{t\in P}\big(\langle s,t\rangle+m_f\big)\le I_{P,f}(s)\le \sup_{t\in P}\big(\langle s,t\rangle+M_f\big),
\end{equation*}
i.e., $h_P(s)+m_f\le I_{P,f}(s)\le h_P(s)+M_f$, which is \eqref{eq:IP-vs-hP}. If $f\equiv 1$, then $m_f=M_f=0$ and $I_{P,1}=h_P$.
\end{proof}

\medskip

On the complex torus $(\C^*)^d$ we define the \emph{canonical potential} as in the introduction by 
\begin{equation}\label{eq:PhiPf-torus-background}
\Phi_{P,f}(z):=I_{P,f}(\Log(z))
=\sup_{t\in P}\Big\{\sum_{j=1}^d t_j\log|z_j|+\log f(t)\Big\},
\qquad z\in(\C^*)^d,
\end{equation}
and extend it to $\C^d$ by upper semicontinuous regularization:
\begin{equation}\label{eq:PhiPf-usc-background}
\Phi_{P,f}(z):=\limsup_{\substack{\zeta\to z\\ \zeta\in(\C^*)^d}} I_{P,f}(\Log\zeta),
\qquad z\in\C^d.
\end{equation}

\begin{lem}\label{lem:PhiPf-psh}
The function $\Phi_{P,f}$ belongs to $\PSH(\C^d)$ and is $d$-circled. Moreover,
\begin{equation}\label{eq:Phi-vs-H}
H_P(z)+\inf_{P}\log f\ \le\ \Phi_{P,f}(z)\ \le\ H_P(z)+\sup_{P}\log f,
\qquad z\in\C^d.
\end{equation}
\end{lem}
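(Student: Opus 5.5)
The plan is to prove plurisubharmonicity on the torus first, then control $\Phi_{P,f}$ from above near the coordinate hyperplanes, and finally extend across them using the usc regularization.

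\emph{Step 1: plurisubharmonicity on $(\C^*)^d$.} For each fixed $t\in P$ the function
\[
z\mapsto \sum_{j=1}^d t_j\log|z_j|+\log f(t)
\]
is pluriharmonic on $(\C^*)^d$. Indeed, it is the real part of a local holomorphic branch of $\sum_j t_j\log z_j$ plus a constant. Hence $\Phi_{P,f}=I_{P,f}\circ\Log$ on $(\C^*)^d$ is a supremum of a family of pluriharmonic functions. By Lemma~\ref{lem:IPf-basic}, $I_{P,f}$ is finite and Lipschitz, so $\Phi_{P,f}$ is continuous on $(\C^*)^d$. A continuous supremum of plurisubharmonic functions is plurisubharmonic, so $\Phi_{P,f}\in\PSH((\C^*)^d)$. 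It is $d$-circled because it depends only on $\Log z$.

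\emph{Step 2: the comparison \eqref{eq:Phi-vs-H} and local boundedness.} On $(\C^*)^d$ the inequality \eqref{eq:Phi-vs-H} is exactly \eqref{eq:IP-vs-hP} evaluated at $s=\Log z$. Taking $\limsup$ over $\zeta\to z$ with $\zeta\in(\C^*)^d$ on both sides transfers it to all of $\C^d$, since $H_P$ on $\C^d$ is defined by the same regularization. Adding a constant commutes with $\limsup$.

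For local boundedness from above, I use $P\subset[0,M]^d$ with $t_j\ge0$. This gives
\[
h_P(\Log\zeta)\le \sum_j M\log^+|\zeta_j|.
\]
Therefore $\Phi_{P,f}$ is bounded above on bounded subsets of $(\C^*)^d$, and the $\limsup$ in \eqref{eq:PhiPf-usc-background} is finite everywhere. It is also $\ge$ the value at interior points, thanks to the lower bound $H_P\ge0$.

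\emph{Step 3: extension across the coordinate hyperplanes (main obstacle).} The complement $A=\C^d\setminus(\C^*)^d$ is a closed pluripolar set, namely the union of the coordinate hyperplanes. By Steps 1 and 2, $\Phi_{P,f}$ is plurisubharmonic on $\C^d\setminus A$ and locally bounded above near every point of $A$. The Grauert--Remmert / Lelong removable singularity theorem then says that
\[
\tilde\Phi(z):=\limsup_{\zeta\to z,\ \zeta\notin A}\Phi_{P,f}(\zeta)
\]
is plurisubharmonic on $\C^d$ and agrees with $\Phi_{P,f}$ off $A$. This is exactly definition \eqref{eq:PhiPf-usc-background}, so $\Phi_{P,f}\in\PSH(\C^d)$.

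The $d$-circled property persists on all of $\C^d$. The torus $(S^1)^d$ acts by $z\mapsto(e^{i\theta_j}z_j)$, which preserves $(\C^*)^d$, and the $\limsup$ is invariant under this action.

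The delicate points are verifying local upper boundedness near $A$, which is where $t\in\R_+^d$ is used, and confirming that the regularized function is not identically $-\infty$. The latter is immediate, since it is finite on $(\C^*)^d$.
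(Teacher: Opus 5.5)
Your proof is correct, but it takes a different route from the paper.

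\textbf{The paper's route.} The paper never restricts to the torus. Because $t\in\R_+^d$, each $\psi_t(z)=\sum_j t_j\log|z_j|+\log f(t)$ is already plurisubharmonic on all of $\C^d$, using the convention $\log 0=-\infty$. The family $(\psi_t)_{t\in P}$ is locally uniformly bounded above, so the regularized upper envelope $\Psi^*$ lies in $\PSH(\C^d)$ by the elementary envelope lemma. The remaining step is to check that $\Psi^*$ coincides with the torus-only $\limsup$ in \eqref{eq:PhiPf-usc-background}. This is done by replacing the vanishing coordinates of an approximating sequence $w_k\to z$ by $1/k$, which can only increase $\Psi$ since $t_j\ge 0$.

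\textbf{Your route.} You first obtain plurisubharmonicity, indeed continuity, on $(\C^*)^d$, where the $\psi_t$ are pluriharmonic. You then extend across the coordinate hyperplanes with the Grauert--Remmert theorem for closed pluripolar sets.

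\textbf{What each buys.} On your side:
\begin{itemize}
\item The extension produced by that theorem is literally $\limsup_{\zeta\to z,\ \zeta\notin A}$, so it matches the paper's definition with no identification step.
\item It isolates that the hypothesis $P\subset\R_+^d$ enters only through local upper boundedness near the hyperplanes.
\end{itemize}
The cost is invoking a considerably deeper extension theorem than the paper needs. The paper uses only the upper-envelope lemma plus a one-line perturbation argument.

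The comparison inequalities and the $d$-circled property are handled the same way in both proofs.

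\textbf{Minor point.} Your remark that the regularization is ``$\ge$ the value at interior points, thanks to $H_P\ge 0$'' is unclear and unnecessary. Finiteness of the $\limsup$ from below already follows from $\Phi_{P,f}\ge\inf_P\log f$ on $(\C^*)^d$. On the torus, the regularization agrees with $I_{P,f}\circ\Log$ by the continuity you established in Step 1.
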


\begin{proof}
For each $t\in P$ define
\begin{equation*}
\psi_t(z):=\sum_{j=1}^d t_j\log|z_j|+\log f(t),\qquad z\in\C^d,
\end{equation*}
Since $\log|z_j|$ is plurisubharmonic and $t_j\ge0$ for $t\in P\subset\R_+^d$, it follows that $\psi_t\in\PSH(\C^d)$ for every $t\in P$.
Observe that the family $(\psi_t)_{t\in P}$ is locally uniformly bounded above.
Indeed, let $K\subset\C^d$ be compact and set $M_j:=\sup_{z\in K}\log^+|z_j|<\infty$. Then for $z\in K$,
\begin{equation*}
\sum_{j=1}^d t_j\log|z_j|
\le \sum_{j=1}^d t_j\log^+|z_j|
\le \Big(\max_{1\le j\le d} M_j\Big)\sum_{j=1}^d t_j.
\end{equation*}
Since $P$ is compact, $\sup_{t\in P}\sum_{j=1}^d t_j<\infty$, and since $\log f$ is continuous on $P$, $\sup_P\log f<\infty$.
Hence $\sup_{t\in P}\sup_{z\in K}\psi_t(z)<\infty$.

Define $\Psi(z):=\sup_{t\in P}\psi_t(z)$ on $\C^d$ and let $\Psi^*$ be its upper semicontinuous regularization.
By standard properties of upper envelopes, $\Psi^*\in\PSH(\C^d)$ (see, e.g., \cite[Ch.\,I]{Demailly12} or \cite[Ch.\,III]{Hormander90}).

On $(\C^*)^d$ we have
\begin{equation*}
\Psi(z)=\sup_{t\in P}\big\{\langle\Log(z),t\rangle+\log f(t)\big\}=I_{P,f}(\Log z).
\end{equation*}
Therefore, by \eqref{eq:PhiPf-usc-background}, for every $z\in\C^d$, we have
\begin{equation*}
\Phi_{P,f}(z)=\limsup_{\substack{\zeta\to z\\ \zeta\in(\C^*)^d}}I_{P,f}(\Log\zeta)
=\limsup_{\substack{\zeta\to z\\ \zeta\in(\C^*)^d}}\Psi(\zeta)=\Psi^*(z),
\end{equation*}
where the last equality follows because one always has
$\limsup_{\zeta\to z,\ \zeta\in(\C^*)^d}\Psi(\zeta)\le \Psi^*(z)$ (since $(\C^*)^d\subset\C^d$), while conversely one may choose
$w_k\to z$ in $\C^d$ with $\Psi(w_k)\to \Psi^*(z)$ and replace any zero coordinate $w_{k,j}=0$ by $1/k$ to obtain
$\zeta_k\in(\C^*)^d$ with $\zeta_k\to z$ and $\Psi(\zeta_k)\ge \Psi(w_k)$ (as $t_j\ge0$ and $\log|0|=-\infty$), which yields the reverse inequality.
Hence $\Phi_{P,f}\in\PSH(\C^d)$.

Since each $\psi_t$ depends only on $(|z_1|,\dots,|z_d|)$, the same is true of $\Phi_{P,f}$, so $\Phi_{P,f}$ is $d$-circled.
Finally, applying \eqref{eq:IP-vs-hP} with $s=\Log(z)$ yields \eqref{eq:Phi-vs-H} on $(\C^*)^d$, and taking
$\limsup_{\zeta\to z,\ \zeta\in(\C^*)^d}$ extends the inequalities to $\C^d$.
\end{proof}

\medskip

We recall the classical Lelong growth classes on $\C^d$:
\begin{equation*}
\mathcal{L}(\C^d):=\Big\{v\in\PSH(\C^d):\ v(z)\le \log^+\|z\|+O(1)\ \text{as }\|z\|\to\infty\Big\},
\end{equation*}
\begin{equation*}
\mathcal{L}^+(\C^d):=\Big\{v\in \mathcal{L}(\C^d):\ v(z)\ge \log^+\|z\|+O(1)\ \text{as }\|z\|\to\infty\Big\}.
\end{equation*}
In the $P$-setting, one uses the corresponding $P$-growth classes defined relative to $H_P$:
\begin{equation*}
\mathcal{L}_P(\C^d):=\Big\{v\in\PSH(\C^d):\ v(z)-H_P(z)=O(1)\ \text{as }\|z\|\to\infty\Big\},
\end{equation*}
\begin{equation*}
\mathcal{L}_{P}^+(\C^d):=\Big\{v\in \mathcal{L}_P(\C^d):\ v(z)\ge H_P(z)+C_v\ \text{on }\C^d\Big\}.
\end{equation*}
Clearly, by Lemma~\ref{lem:PhiPf-psh}, $\Phi_{P,f}\in \mathcal{L}_{P}^+(\C^d)$. Observe that when $P=\Sigma$, one recovers the classical logarithmic regime since
$H_\Sigma(z)=\max_{1\le j\le d}\log^+|z_j|=\log^+\|z\|+O(1)$.

\begin{rem}[Comparison with the simplex growth]\label{rem:Sigma-kP}
Set $A_P:=\max_{t\in P}|t|_1=\max_{t\in P}\sum_{j=1}^d t_j.$
Then $P\subset A_P\,\Sigma$, hence
\[
H_P(z)\le A_P\,H_\Sigma(z)\qquad \text{for all } z\in\C^d,
\]
and therefore every $u\in \mathcal{L}_P(\C^d)$ has $O(\log\|z\|)$ growth. More precisely,
\[
u(z)\le A_P\,H_\Sigma(z)+O(1)=A_P\log^+\|z\|+O(1)\qquad \text{as }\|z\|\to\infty.
\]
Moreover, since $0\in P$, we have $H_P\ge 0$ on $\C^d$. In particular, if $u\in \mathcal{L}_{P}^+(\C^d)$, then $u\ge H_P+C_u\ge C_u$ on $\C^d,$
so $u$ never takes the value $-\infty$. In the present paper, we work intrinsically with $H_P$ and therefore do not impose auxiliary geometric assumptions such as $\Sigma\subset kP$, which are used in parts of the literature to force comparability with classical logarithmic indicators (see, e.g., \cite{BBL18}).
\end{rem}

\subsection*{Currents and Poincar\'e--Lelong formula}
We write $\mathcal{D}^{p,q}(\C^d)$ for the space of smooth compactly supported $(p,q)$-forms and
$\mathcal{D}'^{p,q}(\C^d)$ for the space of $(p,q)$-currents, i.e., continuous linear functionals on
$\mathcal{D}^{d-p,d-q}(\C^d)$, endowed with its weak topology.
Thus $T_n\to T$ in $\mathcal{D}'^{p,q}(\C^d)$ means that
\[
\langle T_n,\psi\rangle\to\langle T,\psi\rangle
\qquad\text{for every }\psi\in\mathcal{D}^{d-p,d-q}(\C^d).
\]

Throughout we adopt the normalization $dd^{c}:=\frac{i}{\pi}\partial\bar\partial$ and will repeatedly use that
if $v_n\to v$ in $L^1_{\mathrm{loc}}(\C^d)$, then $dd^c v_n\to dd^c v$ weakly in
$\mathcal{D}'^{1,1}(\C^d)$.
Indeed, for every test form $\psi\in\mathcal{D}^{d-1,d-1}(\C^d)$,
\[
\langle dd^c(v_n-v),\psi\rangle=\langle v_n-v,dd^c\psi\rangle.
\]
Since $dd^c\psi$ is a smooth compactly supported $(d,d)$-form, it follows that
\[
|\langle v_n-v,dd^c\psi\rangle|
\le C_\psi \|v_n-v\|_{L^1(\operatorname{supp}\psi)}
\]
for some constant $C_\psi>0$, which tends to zero as $n\to\infty$.

Moreover, for a nonzero holomorphic function $F$ on $\mathbb{C}^d$, the Poincar\'e--Lelong formula gives \begin{equation*} dd^c\log|F|=[Z_F]\qquad\text{in }\mathcal{D}'^{1,1}(\mathbb{C}^d), \end{equation*}
where $[Z_F]$ denotes the current of integration over the zero hypersurface of $F$, counted with multiplicities.
(When $F\not\equiv 0$, one has $\log|F|\in L^1_{\mathrm{loc}}(\C^d)$.)

\medskip

In what follows, we will repeatedly use standard compactness and uniqueness properties of plurisubharmonic functions to establish $L^1_{\mathrm{loc}}$ convergence of the normalized logarithmic potentials and weak convergence of their associated currents.

\begin{lem}\label{lem:psh-tools}
Let $(u_j)$ be a sequence of plurisubharmonic functions on $\mathbb{C}^d$ that is locally uniformly bounded above. Then:
\begin{enumerate}
    \item[\textup{(i)}] Either $u_j\to -\infty$ locally uniformly on $\mathbb{C}^d$, or $(u_j)$ admits a subsequence that converges in $L^1_{\mathrm{loc}}(\mathbb{C}^d)$ to a plurisubharmonic function.
    \item[\textup{(ii)}] If $u,v\in\operatorname{PSH}(\mathbb{C}^d)$ and $u=v$ Lebesgue-a.e.\ on $\mathbb{C}^d$, then $u\equiv v$ on $\mathbb{C}^d$.
\end{enumerate}
\end{lem}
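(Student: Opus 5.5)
The two assertions are classical facts of pluripotential theory. I would derive them from the corresponding facts for subharmonic functions, using the sub-mean value property on balls and polydiscs, rather than re-derive everything from scratch.

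For (i), assume the sequence $(u_j)$ does not converge to $-\infty$ locally uniformly. Then there is a compact set $K$, a constant $c$, and a subsequence with $\sup_K u_j\ge -c$. Pick points $z_j\in K$ with $u_j(z_j)\ge -c-1$, and by compactness assume $z_j\to z_0$. Fix a radius $R>0$ and let $B_j$ be the ball of radius $R$ about $z_j$. Let $M$ be a uniform upper bound for the $u_j$ on a neighbourhood of all the $B_j$. The sub-mean value inequality on $B_j$ gives
\[
\int_{B_j}(M-u_j)\,d\lambda\le |B_j|(M+c+1),
\]
so the $L^1$ norms of the $u_j$ are bounded on $B(z_0,R/2)$ for large $j$. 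Next I would propagate this bound to all of $\C^d$ by the standard connectedness argument. Let $E$ be the set of points having a neighbourhood on which $\|u_j\|_{L^1}$ is bounded along the subsequence. $E$ is open and nonempty. It is also closed: if $w\in\overline E$, pick $w'\in E$ close to $w$. A bounded $L^1$ norm near $w'$ forces, for each $j$, a point $y_j$ near $w'$ with $u_j(y_j)\ge -C'$. The sub-mean value inequality on a ball about $y_j$ that contains a neighbourhood of $w$ then gives the $L^1$ bound near $w$. Hence the sequence is bounded in $L^1_{\rm loc}$. To extract a convergent subsequence, note that $\Delta u_j\ge0$ and, for a cutoff $\chi$,
\[
\langle \Delta u_j,\chi\rangle=\langle u_j,\Delta\chi\rangle
\]
is bounded, so the positive measures $\Delta u_j$ have locally bounded mass. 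By the Riesz decomposition, $u_j$ equals a Newtonian potential of $\chi\,\Delta u_j$ plus a harmonic function. The potential parts are relatively compact in $L^1_{\rm loc}$, by weak compactness of measures and continuity of convolution with the locally integrable kernel. The harmonic parts are bounded in $L^1$ and hence locally uniformly bounded with bounded derivatives, so Arzel\`a--Ascoli applies. A diagonal argument over an exhaustion of $\C^d$ yields an $L^1_{\rm loc}$ limit $v$. Finally, $v$ coincides a.e.\ with a plurisubharmonic function: its distributional complex Hessian $\big(\partial^2 v/\partial z_k\partial\bar z_l\big)$ is a limit of positive forms, hence positive, and a standard regularization (the limit of $v*\rho_\varepsilon$ as $\varepsilon\to0$) produces the psh representative.

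For (ii), let $\rho_\varepsilon$ be radial mollifiers. For $u\in\PSH$, the sub-mean value property on spheres, together with monotonicity of spherical means in the radius, gives $u*\rho_\varepsilon\downarrow u$ pointwise as $\varepsilon\downarrow0$. If $u=v$ a.e., then $u*\rho_\varepsilon=v*\rho_\varepsilon$ everywhere, and letting $\varepsilon\to0$ gives $u\equiv v$.

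The main obstacle is the compactness step in (i), specifically the passage from a local $L^1$ bound to a global one by the chaining argument. In a written version I would cite it from \cite[Thm.~3.2.12]{Hormander90} or \cite[Ch.~I]{Demailly12}, as the paper already does for upper envelopes, rather than reprove it.
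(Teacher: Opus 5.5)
Your proposal is correct and is essentially the paper's approach. The paper's proof simply cites Demailly (Prop.~5.9 for (i), and the regularization theorem, Thm.~5.5, or Klimek for (ii)); your sketch is the standard argument behind those references, and your (ii) is exactly the decreasing radial-mollifier argument.

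One small simplification: on $\C^d$ the chaining step you single out as the main obstacle is unnecessary. Your first sub-mean-value estimate works for every radius $R$, because all balls $B(z_j,R)$ lie in a fixed compact set on which the $u_j$ are uniformly bounded above, so it already yields the $L^1_{\mathrm{loc}}$ bound on all of $\C^d$.
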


\begin{proof}
For \textup{(i)}, see \cite[Prop.~5.9]{Demailly12}. The identity in \textup{(ii)} follows from the standard regularization of plurisubharmonic functions; see \cite[Thm.~5.5]{Demailly12} or \cite[\S\,2.6]{Klimek91}.
\end{proof}

\begin{cor}\label{cor:psh-ae-to-L1}
Let $(u_j)$ be plurisubharmonic on $\mathbb{C}^d$ and locally uniformly bounded above. 
Assume that $u_j\to u$ Lebesgue-a.e.\ on $\mathbb{C}^d$ for some plurisubharmonic function $u \not\equiv -\infty$.
Then $u_j\to u$ in $L^1_{\mathrm{loc}}(\mathbb{C}^d)$ and hence $dd^c u_j\to dd^c u$ weakly as currents.
\end{cor}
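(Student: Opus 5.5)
The argument splits into two steps: first show that a limit point of $(u_j)$ in $L^1_{\mathrm{loc}}$ exists and must equal $u$, then upgrade this to convergence of the whole sequence by the usual subsequence principle. The current statement follows at the end from the continuity of $dd^c$ under $L^1_{\mathrm{loc}}$ convergence recorded above.

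\textbf{Step 1: reduction to subsequences.} To prove $u_j\to u$ in $L^1_{\mathrm{loc}}(\C^d)$, it suffices to show that every subsequence $(u_{j_k})$ has a further subsequence converging to $u$ in $L^1_{\mathrm{loc}}$. The space $L^1_{\mathrm{loc}}$ is metrizable (via a countable exhaustion by compacts), so the standard subsequence principle then gives convergence of the full sequence.

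\textbf{Step 2: identification of limit points.} Fix a subsequence $(u_{j_k})$. It is still plurisubharmonic and locally uniformly bounded above, so Lemma~\ref{lem:psh-tools}(i) applies and leaves two alternatives.
\begin{itemize}
\item[(a)] $u_{j_k}\to-\infty$ locally uniformly. This is impossible. Since $u\not\equiv-\infty$ is plurisubharmonic, $u\in L^1_{\mathrm{loc}}$, so $u>-\infty$ almost everywhere. Then $u_{j_k}\to u$ almost everywhere on a set of positive (indeed full) measure, where the limit is finite, contradicting divergence to $-\infty$.
\item[(b)] A further subsequence $u_{j_{k_l}}$ converges in $L^1_{\mathrm{loc}}$ to some $v\in\PSH(\C^d)$. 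Passing to yet another subsequence, $L^1_{\mathrm{loc}}$ convergence (applied on an exhaustion by balls, with a diagonal argument) gives almost everywhere convergence to $v$. By hypothesis the same functions also converge almost everywhere to $u$. Hence $u=v$ almost everywhere, and Lemma~\ref{lem:psh-tools}(ii) gives $u\equiv v$.
\end{itemize}
Thus every subsequence has a further subsequence converging to $u$ in $L^1_{\mathrm{loc}}$, and Step 1 yields $u_j\to u$ in $L^1_{\mathrm{loc}}(\C^d)$.

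\textbf{Step 3: currents.} The weak convergence $dd^c u_j\to dd^c u$ then follows from the estimate $|\langle v_n-v,dd^c\psi\rangle|\le C_\psi\|v_n-v\|_{L^1(\operatorname{supp}\psi)}$ stated earlier.

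\textbf{Main obstacle.} The only delicate point is excluding alternative (a) and extracting an almost-everywhere convergent subsequence from the $L^1_{\mathrm{loc}}$ limit. Both are handled by the finiteness of $u$ almost everywhere, which holds because $u\not\equiv-\infty$ forces $u\in L^1_{\mathrm{loc}}$.
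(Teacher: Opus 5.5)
Your proposal is correct and follows essentially the same route as the paper. Both arguments apply the dichotomy of Lemma~\ref{lem:psh-tools}(i) to an arbitrary subsequence, rule out divergence to $-\infty$ using that $u$ is finite almost everywhere, and identify the $L^1_{\mathrm{loc}}$ limit with $u$ via almost-everywhere convergence and Lemma~\ref{lem:psh-tools}(ii); the subsequence principle and the continuity of $dd^c$ then finish the proof.
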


\begin{proof}
Let $(u_{j_k})$ be any subsequence. Because $u \not\equiv -\infty$, the function $u$ is finite Lebesgue-almost everywhere on $\mathbb{C}^d$.
Since $u_j \to u$ almost everywhere, the sequence $(u_j)$ cannot converge to $-\infty$ locally uniformly.

Thus, by Lemma~\ref{lem:psh-tools}\textup{(i)}, $(u_{j_k})$ admits a further subsequence $(u_{j_{k_\ell}})$ converging in $L^1_{\mathrm{loc}}(\mathbb{C}^d)$ to some plurisubharmonic function $v$.
After extracting once more, we may assume that $u_{j_{k_\ell}}\to v$ Lebesgue-almost everywhere on $\mathbb{C}^d$.

Since we also have $u_{j_{k_\ell}}\to u$ Lebesgue-almost everywhere by assumption, it follows that $v=u$ almost everywhere.
By Lemma~\ref{lem:psh-tools}\textup{(ii)}, we conclude that $v \equiv u$.

Thus, every subsequence has a further subsequence converging to $u$ in $L^1_{\mathrm{loc}}(\mathbb{C}^d)$, which implies that the entire sequence converges: $u_j\to u$ in $L^1_{\mathrm{loc}}(\mathbb{C}^d)$.
The weak convergence $dd^c u_j\to dd^c u$ then follows from the continuity of $dd^c$ under $L^1_{\mathrm{loc}}$ convergence.
\end{proof}

% ------------------------------------------------------------

\subsection{Probabilistic preliminaries}\label{subsec:prob-prelim}

We recall the notion of convergence in probability that will be used for logarithmic potentials in $L^1_{\rm loc}(\C^d)$ and for currents in $\mathcal D'^{1,1}(\C^d)$.
Since $L^1_{\rm loc}(\C^d)$ is metrizable, convergence in probability there can be treated in the usual metric sense; one convenient metric inducing the standard $L^1_{\rm loc}$ topology is
\begin{equation*}
d_{L^1_{\rm loc}}(u,v)
:=\sum_{m=1}^\infty 2^{-m}\min\Big\{1,\ \|u-v\|_{L^1(B_m)}\Big\},
\qquad 
B_m:=\{z\in\C^d:\ \|z\|\le m\}.
\end{equation*}
In what follows, when discussing convergence in probability in the metrizable space $L^1_{\mathrm{loc}}$, we will frequently use the following standard characterization, which we refer to as the \emph{subsequence principle}:

\emph{$X_n\to x$ in probability if and only if for every subsequence $(X_{n_k})$
there exists a further subsequence $(X_{n_{k_\ell}})$ such that
$X_{n_{k_\ell}}\to x$ almost surely.}

For currents in $\mathcal D'^{1,1}(\C^d)$ equipped with the weak topology of currents, the space is not metrizable in general, so we use the neighborhood formulation of convergence in probability recalled below.

\begin{rem}\label{rem:curr-prob}
Let $(\Omega,\mathcal F,\mathbb P)$ be a probability space, let $(S,\tau)$ be a topological space endowed with its Borel $\sigma$-algebra, let $X_n:\Omega\to S$ be $S$-valued random variables, and let $x\in S$ be deterministic. We say that
\begin{equation*}
X_n\to x \text{ in probability}
\qquad\Longleftrightarrow\qquad
\forall\ \text{open neighborhood }\mathcal O\ni x,\ \ \mathbb{P}(X_n\in\mathcal O)\to 1.
\end{equation*}
In metrizable spaces this agrees with the usual metric definition, and it is equivalent to the subsequence principle stated above.

The weak topology on $\mathcal D'^{1,1}(\C^d)$ is not metrizable in general. We therefore record both conclusions in Corollary~\ref{cor:currents}: (ii) is convergence in probability in the neighborhood sense for the weak topology of currents, while (i) is the corresponding almost sure subsequence convergence statement, deduced from Theorem~\ref{thm:potentials} using the continuity of the operator $dd^c:L^1_{\mathrm{loc}}(\C^d)\to\mathcal D'^{1,1}(\C^d).$
\end{rem}

\medskip

We next record the probabilistic tools used later to control logarithmic potentials and small-ball probabilities
for sums indexed by $nP\cap\Z_+^d$. 

For a complex-valued random variable $X$, define the \emph{concentration function}
\begin{equation}\label{eq:conc-fn}
\mathcal Q(X;r):=\sup_{w\in\C}\mathbb P\big(X\in B(w,r)\big),\qquad r>0,
\end{equation}
where $B(w,r)\subset\C$ is the Euclidean disk of center $w$ and radius $r$.
We call $X$ \emph{non-degenerate} if it is not almost surely constant; equivalently $\mathcal Q(X;r)<1$ for some $r>0$.
For $c\in\C$ and $a\in\C\setminus\{0\}$ one has
\begin{equation*}
\mathcal Q(X+c;r)=\mathcal Q(X;r),
\qquad
\mathcal Q(aX;r)=\mathcal Q\!\left(X;\frac{r}{|a|}\right),
\end{equation*}
and if $X,Y$ are independent then
\begin{equation}\label{eq:conc-basic-ineq}
\mathcal Q(X+Y;r)\le \mathcal Q(X;r),\qquad r>0.
\end{equation}

\begin{thm}[Kolmogorov--Rogozin inequality]\label{thm:KR}
There exists a constant $C_{\mathrm{KR}}>0$ such that for every $m\in\mathbb{N}$, every family of independent complex-valued random variables
$X_1,\dots,X_m$, and every $r>0$,
\begin{equation}\label{eq:KR}
\mathcal Q\Big(\sum_{j=1}^m X_j;\,r\Big)
\le
\frac{C_{\mathrm{KR}}}{\sqrt{\sum_{j=1}^m \bigl(1-\mathcal Q(X_j;r)\bigr)}}.
\end{equation}
\end{thm}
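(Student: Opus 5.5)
We would reduce to the classical one-dimensional Kolmogorov--Rogozin inequality on $\R$, by projecting onto a well-chosen real direction. We use the real inequality in its symmetrized form, as in Rogozin's and Esseen's proofs. Write $\widetilde X_j:=X_j-X_j'$, where $X_j'$ is an independent copy of $X_j$. The real statement then reads: for independent real random variables $Y_1,\dots,Y_m$ and every $r>0$,
\[
\mathcal Q_{\R}\Big(\sum_j Y_j;\,r\Big)\le \frac{C}{\sqrt{\sum_j \mathbb P(|\widetilde Y_j|\ge r)}}.
\]
This is proved via Esseen's smoothing inequality $\mathcal Q_\R(S;r)\le C r\int_{|t|\le 1/r}|\varphi_S(t)|\,dt$. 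One then uses $|\varphi_{Y_j}|^2=\varphi_{\widetilde Y_j}$ and $|\varphi|\le \exp(-\tfrac12(1-|\varphi|^2))$, together with the lower bound $r\int_{|t|\le 1/r}(1-\varphi_{\widetilde Y}(t))\,dt\ge c\,\mathbb P(|\widetilde Y|\ge r)$. The last step reduces the resulting integral of an exponential of a sum to the identically distributed case via Hölder's inequality. We would cite this real version rather than reprove it.

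\textbf{Step 1 (comparison of concentrations).} Fix $\theta\in\R$ and write $\pi_\theta(x):=\mathrm{Re}(e^{-i\theta}x)$. The disk $B(w,r)$ is contained in the strip $\{x:|\pi_\theta(x)-\pi_\theta(w)|\le r\}$, so
\[
\mathcal Q\Big(\sum_j X_j;\,r\Big)\le \mathcal Q_\R\Big(\sum_j \pi_\theta(X_j);\,r\Big).
\]
\textbf{Step 2 (lower bound for the symmetrized tails).} For each $j$ we have $\mathbb P(|\widetilde X_j|\ge r)\ge 1-\mathcal Q(X_j;r)$. Indeed, conditioning on $X_j'$ gives $\mathbb P(|X_j-X_j'|<r)\le \mathcal Q(X_j;r)$. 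Moreover, if $|\widetilde X_j|\ge r$ then $|\pi_0(\widetilde X_j)|\ge r/\sqrt2$ or $|\pi_{\pi/2}(\widetilde X_j)|\ge r/\sqrt2$. Summing over $j$, one of the two directions $\theta\in\{0,\pi/2\}$ satisfies
\[
\sum_j \mathbb P\big(|\pi_\theta(\widetilde X_j)|\ge r/\sqrt2\big)\ge \tfrac12\sum_j\big(1-\mathcal Q(X_j;r)\big).
\]
The key point is that this $\theta$ depends on the whole family but not on $w$. It is therefore legitimate to fix it before taking the supremum over $w$.

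\textbf{Step 3 (matching the radii).} Apply the real symmetrized inequality with radius $r/\sqrt2$, using $\widetilde{\pi_\theta(X_j)}=\pi_\theta(\widetilde X_j)$. Then use the elementary covering bound $\mathcal Q_\R(S;r)\le 2\,\mathcal Q_\R(S;r/\sqrt2)$, since an interval of length $2r$ is covered by two intervals of length $\sqrt2\, r$. Combining with Steps 1 and 2 gives \eqref{eq:KR} with $C_{\mathrm{KR}}=2\sqrt2\,C$, uniformly in $m$ and $r$.

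\textbf{Main obstacle.} The delicate point is that the denominator involves the \emph{complex} concentration $\mathcal Q(X_j;r)$, while a projection can only increase concentration. The naive bound $1-\mathcal Q_\R(\pi_\theta X_j;r)\ge 1-\mathcal Q(X_j;r)$ goes the wrong way. This is why we pass through the symmetrized tails $\mathbb P(|\widetilde X_j|\ge r)$ and the two-direction pigeonhole in Step 2. If citing the symmetrized real version were deemed insufficient, we would instead run Esseen's smoothing argument directly in $\R^2$ on the strip. The Hölder reduction to the identically distributed case would go through unchanged.
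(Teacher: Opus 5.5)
Your proposal is correct, and it takes a different route from the paper. The paper gives no argument: in Remark~\ref{rem:KR} it simply asserts that the statement is the $\C\simeq\R^2$ case of the Kolmogorov--Rogozin inequality and cites Petrov, Kesten and Esseen. Since the Petrov and Kesten versions are one-dimensional, the citation implicitly relies on the multidimensional (Esseen) form. You instead derive the complex case from the one-dimensional theory in three steps. First, you project onto a real direction $\pi_\theta$, valid for every $\theta$. Second, you use the symmetrization bound $\mathbb P(|\widetilde X_j|\ge r)\ge 1-\mathcal Q(X_j;r)$ and a pigeonhole over the two orthogonal directions; this choice of $\theta$ depends on the family but not on $w$, so it may be fixed before taking the supremum. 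Third, a covering step restores the radius, giving $C_{\mathrm{KR}}=2\sqrt2\,C$ uniformly in $m$ and $r$. You also correctly identify the one real pitfall: projection can only increase concentration, so $1-\mathcal Q_\R(\pi_\theta X_j;r)$ cannot be compared directly with $1-\mathcal Q(X_j;r)$. The symmetrized tails are exactly what behaves well under the two-direction split. Your route needs the symmetrized (Esseen) form of the real inequality, but this is not essential. From $\mathbb P(|\widetilde Y|\le 2\rho)\ge \mathcal Q_\R(Y;\rho)^2$ one gets $1-\mathcal Q_\R(Y;\rho)\ge \tfrac12\,\mathbb P(|\widetilde Y|>2\rho)$, so the plain real Kolmogorov--Rogozin inequality would serve just as well after one more adjustment of radii. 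What your approach buys is a self-contained reduction of the statement actually used in the paper to the standard one-dimensional references, with an explicit constant. The citation is shorter but leaves the passage from $\R$ to $\C$ to the multidimensional literature.
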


\begin{rem}\label{rem:KR}
Theorem~\ref{thm:KR} is the $\C$ (equivalently $\R^2$) case of the Kolmogorov--Rogozin inequality for concentration functions.
For references and related forms of the inequality, see Petrov~\cite[Ch.~II, \S2]{Petrov75}, Kesten~\cite{Kesten69},
and Ess\'een~\cite{Ess68}.
\end{rem}

\medskip

The deterministic tools above, together with Theorem~\ref{thm:KR}, will be combined with the coefficient assumptions
\eqref{eq:logtail} and \eqref{eq:logmoment} in Sections~\ref{sec:convprob} and~\ref{sec:conv-as} to control small-ball probabilities
for the random sums defining $\mathbf P_n$ and to obtain pointwise convergence of the normalized logarithmic potentials.

\section{Convergence in probability}\label{sec:convprob}

In this section we prove Theorem~\ref{thm:potentials} and Corollary~\ref{cor:currents}. Assuming the tail condition \eqref{eq:logtail}, we first establish probabilistic bounds on the random coefficients. Using standard compactness/uniqueness properties of plurisubharmonic functions, we then upgrade the pointwise convergence to $L^1_{\rm loc}(\C^d)$ convergence, and hence obtain weak convergence in probability of the normalized zero currents.

\subsection{Coefficient control under tail assumption}\label{subsec:prob-coeff}
We begin by deriving two elementary subexponential bounds on the random variables required for the subsequent proofs.

\begin{lem}\label{lem:xi-subexp-prob}
Assume \eqref{eq:logtail}. Then for every $\varepsilon>0$,
\begin{equation}\label{eq:max-xi-op}
\lim_{n \to \infty} \mathbb P\Big(\max_{\alpha\in nP\cap\Z_+^d}\log(1+|\xi_\alpha|)>\varepsilon n\Big) = 0.
\end{equation}
\end{lem}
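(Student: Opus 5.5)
The argument is a union bound over the lattice set, combined with the cardinality estimate of Lemma~\ref{lem:lattice-growth} and the tail assumption \eqref{eq:logtail}.

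Fix $\varepsilon>0$ and write $N_n:=|nP\cap\Z_+^d|$. Since the $\xi_\alpha$ are identically distributed with $\xi_0$, subadditivity of $\mathbb P$ gives
\begin{equation*}
\mathbb P\Big(\max_{\alpha\in nP\cap\Z_+^d}\log(1+|\xi_\alpha|)>\varepsilon n\Big)
\le \sum_{\alpha\in nP\cap\Z_+^d}\mathbb P\big(\log(1+|\xi_\alpha|)>\varepsilon n\big)
= N_n\,\mathbb P\big(\log(1+|\xi_0|)>\varepsilon n\big).
\end{equation*}
Independence is not needed here; only the identical distribution is used.

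Next, Lemma~\ref{lem:lattice-growth} gives $N_n\le C_P n^d$ for $n\ge n_P$. Set $g(t):=t^d\,\mathbb P(\log(1+|\xi_0|)>t)$. By \eqref{eq:logtail}, $g(t)\to0$ as $t\to\infty$. With $t=\varepsilon n$, the right-hand side is therefore bounded by
\begin{equation*}
C_P n^d\,\mathbb P\big(\log(1+|\xi_0|)>\varepsilon n\big)=C_P\,\varepsilon^{-d}\,g(\varepsilon n).
\end{equation*}
This tends to $0$ as $n\to\infty$, because $\varepsilon$ is fixed and $\varepsilon n\to\infty$. That is exactly \eqref{eq:max-xi-op}.

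There is no real obstacle. The one point to note is that the exponent $d$ in the tail condition \eqref{eq:logtail} is precisely calibrated against the $n^d$ growth of the lattice count, so that the little-$o$ rate absorbs the polynomial factor $C_P n^d$. This is also why the constant $\varepsilon^{-d}$ is harmless for each fixed $\varepsilon$.
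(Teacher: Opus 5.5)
Your proof is correct and matches the paper's argument step for step. Both use a union bound relying only on identical distribution, then the count $|nP\cap\Z_+^d|\le C_P n^d$ from Lemma~\ref{lem:lattice-growth}, and then the rewriting $C_P\varepsilon^{-d}\,(\varepsilon n)^d\,\mathbb P(\log(1+|\xi_0|)>\varepsilon n)\to 0$ via \eqref{eq:logtail}.
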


\begin{proof}
Fix $\varepsilon>0$. Observe that
\begin{equation*}
\Big\{\max_{\alpha\in nP\cap\Z_+^d}\log(1+|\xi_\alpha|)>\varepsilon n\Big\}
=\bigcup_{\alpha\in nP\cap\Z_+^d}\Big\{\log(1+|\xi_\alpha|)>\varepsilon n\Big\}.
\end{equation*}
Hence, by the union bound and the identical distribution of $(\xi_\alpha)$,
\begin{equation*}
\mathbb P\Big(\max_{\alpha\in nP\cap\Z_+^d}\log(1+|\xi_\alpha|)>\varepsilon n\Big)
\le |nP\cap\Z_+^d|\,\mathbb P\big(\log(1+|\xi_0|)>\varepsilon n\big).
\end{equation*}
By Lemma~\ref{lem:lattice-growth}, $|nP\cap\Z_+^d|\le C_P n^d$ for all sufficiently large $n$, and therefore
\begin{equation*}
\mathbb P\Big(\max_{\alpha\in nP\cap\Z_+^d}\log(1+|\xi_\alpha|)>\varepsilon n\Big)
\le C_P n^d\,\mathbb P\big(\log(1+|\xi_0|)>\varepsilon n\big)
= C_P\varepsilon^{-d}\Big((\varepsilon n)^d\,\mathbb P(\log(1+|\xi_0|)>\varepsilon n)\Big).
\end{equation*}
The term in parentheses tends to $0$ as $n\to\infty$ by \eqref{eq:logtail}, which proves \eqref{eq:max-xi-op}.
\end{proof}

We will also need an almost sure bound along a sparse extraction from an arbitrary subsequence.

\begin{lem}\label{lem:good-subsequence}
Assume \eqref{eq:logtail}. Let $(n_k)_{k\ge1}$ be any increasing sequence of integers.
Then there exists a further subsequence $(m_j)_{j\ge1}$, with $m_j:=n_{k_j}$, such that for all $j \ge 1$,
\begin{equation}\label{eq:good-subseq-choice}
m_j\ge j^3
\quad\text{and}\quad
m_j^d\,\mathbb P\!\left(\log(1+|\xi_0|)>\frac{m_j}{j}\right)\le 2^{-j}.
\end{equation}
Consequently, there exists an event $\Omega_\xi$ with $\mathbb P(\Omega_\xi)=1$ such that for every
$\omega\in\Omega_\xi$,
\begin{equation}\label{eq:good-subseq-bc}
\max_{\alpha\in m_jP\cap\Z_+^d}\log(1+|\xi_\alpha(\omega)|)\le \frac{m_j}{j}
\end{equation}
for all sufficiently large $j$.
\end{lem}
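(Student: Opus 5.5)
The plan is to choose the indices $k_j$ inductively, in increasing order, using the tail condition \eqref{eq:logtail} at the scale $t=m/j$ for fixed $j$. Fix $j\ge1$ and put $g(t):=t^d\,\mathbb P(\log(1+|\xi_0|)>t)$, so that $g(t)\to0$ as $t\to\infty$ by \eqref{eq:logtail}. For a candidate $m=n_k$ we write
\[
m^d\,\mathbb P\Big(\log(1+|\xi_0|)>\frac{m}{j}\Big)=j^d\, g(m/j).
\]
Because $j$ is fixed, $m/j\to\infty$ as $k\to\infty$. Hence there is a threshold $T_j$ such that $g(t)\le 2^{-j}j^{-d}$ for all $t\ge T_j$. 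We then choose $k_j>k_{j-1}$ so large that $n_{k_j}\ge \max\{j^3,\,jT_j\}$; this is possible since $n_k\to\infty$. With $m_j:=n_{k_j}$, both conditions in \eqref{eq:good-subseq-choice} hold, and $(m_j)$ is strictly increasing. This step is purely deterministic and is the only place the tail assumption enters. The one point needing care is that the threshold must depend on $j$, which is why we choose one index at a time.

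For the almost sure statement we apply Borel--Cantelli to the events
\[
A_j:=\Big\{\max_{\alpha\in m_jP\cap\Z_+^d}\log(1+|\xi_\alpha|)>m_j/j\Big\}.
\]
The union bound and identical distribution, as in the proof of Lemma~\ref{lem:xi-subexp-prob}, give
\[
\mathbb P(A_j)\le |m_jP\cap\Z_+^d|\,\mathbb P\big(\log(1+|\xi_0|)>m_j/j\big).
\]
By Lemma~\ref{lem:lattice-growth}, $|m_jP\cap\Z_+^d|\le C_P m_j^d$ once $m_j\ge n_P$, which holds for large $j$ since $m_j\ge j^3$. Therefore $\mathbb P(A_j)\le C_P2^{-j}$ for all large $j$, and $\sum_j\mathbb P(A_j)<\infty$. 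Borel--Cantelli then yields a full-measure event $\Omega_\xi$ on which only finitely many $A_j$ occur, which is exactly \eqref{eq:good-subseq-bc}.

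There is no real obstacle here. The only bookkeeping issues are to let the threshold depend on $j$ and to absorb the lattice-count constant $C_P$, which is harmless because $\sum_j C_P2^{-j}<\infty$. One could equally fold $C_P$ into the choice by requiring $g(m_j/j)\le 2^{-j}j^{-d}/C_P$, but this is not needed.
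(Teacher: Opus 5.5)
Your proof is correct and follows the paper's argument essentially step for step. You choose $k_j>k_{j-1}$ inductively using $t^d\,\mathbb P(\log(1+|\xi_0|)>t)\to0$ at the scale $t=m/j$; your explicit threshold $T_j$ just makes the paper's limit $n^d\,\mathbb P(\log(1+|\xi_0|)>n/j)\to0$ quantitative. You then combine the union bound, the lattice count $|m_jP\cap\Z_+^d|\le C_Pm_j^d$ for large $j$, and Borel--Cantelli, exactly as the paper does.
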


\begin{proof}
Fix $j\ge 1$. The assumption \eqref{eq:logtail} implies that
$
\lim_{t \to \infty} t^d\mathbb P\big(\log(1+|\xi_0|)>t\big) = 0,
$
and hence
\begin{equation*}
\lim_{n \to \infty} n^d\,\mathbb P\!\left(\log(1+|\xi_0|)>\frac{n}{j}\right)
= j^d\lim_{n \to \infty} \Big(\Big(\frac{n}{j}\Big)^d\mathbb P\Big(\log(1+|\xi_0|)>\frac{n}{j}\Big)\Big) = 0.
\end{equation*}
Since $n_k\to\infty$, we can inductively choose $k_j>k_{j-1}$ large enough such that $m_j:=n_{k_j}\ge j^3$ and
$m_j^d\mathbb P(\log(1+|\xi_0|)>m_j/j)\le 2^{-j}$, proving \eqref{eq:good-subseq-choice}.

Using the union bound, the identical distribution of $(\xi_\alpha)$, and Lemma~\ref{lem:lattice-growth}, we have
\begin{align*}
\mathbb P\!\left(\max_{\alpha\in m_jP\cap\Z_+^d}\log(1+|\xi_\alpha|)>\frac{m_j}{j}\right)
&\le |m_jP\cap\Z_+^d|\,\mathbb P\!\left(\log(1+|\xi_0|)>\frac{m_j}{j}\right)\\
&\le C_P\,m_j^d\,\mathbb P\!\left(\log(1+|\xi_0|)>\frac{m_j}{j}\right)\\
&\le C_P\,2^{-j}.
\end{align*}
Since $\sum_{j=1}^\infty C_P 2^{-j}<\infty$,  Borel--Cantelli lemma yields \eqref{eq:good-subseq-bc}.
\end{proof}

\subsection{Pointwise convergence and the $L^1_{\rm loc}$ upgrade}\label{subsec:pointwise-L1}

First we prove pointwise convergence in probability of the normalized logarithmic potentials on $(\C^*)^d$.
\begin{thm}\label{thm:KZ-P-Theorem41}
Let $(\xi_{\alpha})_{\alpha\in\Z_+^d}$ be i.i.d.\ complex-valued non-degenerate random variables and assume \eqref{eq:profile}.
If \eqref{eq:logtail} holds, then for every $z\in(\C^*)^d$,
\begin{equation}\label{eq:pot-conv-prob}
\frac1n\log|\mathbf{P}_n(z)|\longrightarrow \Phi_{P,f}(z)\qquad\text{in probability as }n\to\infty.
\end{equation}
\end{thm}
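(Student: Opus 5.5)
Fix $z\in(\C^*)^d$ and write $s:=\Log(z)$. We prove an upper bound and a lower bound, each holding with probability tending to one, for every $\varepsilon>0$:
\[
\mathbb P\Big(\tfrac1n\log|\mathbf P_n(z)|>\Phi_{P,f}(z)+\varepsilon\Big)\to0,\qquad
\mathbb P\Big(\tfrac1n\log|\mathbf P_n(z)|<\Phi_{P,f}(z)-\varepsilon\Big)\to0 .
\]

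\emph{Upper bound.} For every $\alpha\in nP\cap\Z_+^d$, \eqref{eq:profile} gives
\[
|c_{\alpha,n}z^\alpha|=\exp\big(n(\langle s,\alpha/n\rangle+\log f(\alpha/n))+o(n)\big)\le e^{nI_{P,f}(s)+o(n)},
\]
uniformly in $\alpha$. Hence
\[
|\mathbf P_n(z)|\le |nP\cap\Z_+^d|\cdot\max_\alpha|\xi_\alpha|\cdot e^{nI_{P,f}(s)+o(n)}.
\]
By Lemma~\ref{lem:lattice-growth}, the cardinality factor is $e^{o(n)}$. By Lemma~\ref{lem:xi-subexp-prob}, $\max_\alpha|\xi_\alpha|\le e^{\varepsilon n}$ with probability tending to one. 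This gives the upper bound.

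\emph{Lower bound.} This is the main step, and we handle it by anti-concentration. By Lemma~\ref{lem:IPf-basic}, choose a maximizer $t^*\in P$ of $t\mapsto\langle s,t\rangle+\log f(t)$. By continuity of this function on $P$, there is $\delta>0$ with
\[
\langle s,t\rangle+\log f(t)\ge I_{P,f}(s)-\varepsilon/4\quad\text{for all } t\in P,\ |t-t^*|<\delta.
\]
We need many lattice points near $nt^*$. Since $P$ is convex with nonempty interior, the set $U:=\{t\in P:|t-t^*|<\delta\}$ contains a small cube, by moving slightly toward an interior point $t^{(0)}$. Arguing as in Lemma~\ref{lem:lattice-growth}, the index set $A_n:=\{\alpha\in nP\cap\Z_+^d:\alpha/n\in U\}$ then satisfies $|A_n|\ge c\,n^d$ for large $n$. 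For $\alpha\in A_n$, \eqref{eq:profile} gives, for large $n$,
\[
|c_{\alpha,n}z^\alpha|\ge r_n:=e^{n(I_{P,f}(s)-\varepsilon/2)}.
\]

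Now condition on $(\xi_\beta)_{\beta\notin A_n}$ and use \eqref{eq:conc-basic-ineq} to drop those terms:
\[
\mathbb P(|\mathbf P_n(z)|<r_n)\le\mathcal Q\Big(\sum_{\alpha\in A_n}\xi_\alpha c_{\alpha,n}z^\alpha;\,r_n\Big).
\]
Each summand satisfies $\mathcal Q(\xi_\alpha c_{\alpha,n}z^\alpha;r_n)=\mathcal Q(\xi_0;r_n/|c_{\alpha,n}z^\alpha|)\le\mathcal Q(\xi_0;1)$. This is where some care is needed: non-degeneracy only guarantees $\mathcal Q(\xi_0;r_0)<1$ for some $r_0>0$. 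We fix such an $r_0$ and replace $r_n$ by $r_0r_n$, which costs only a factor $e^{O(1)}$ and does not affect the exponential rate. Theorem~\ref{thm:KR} then gives
\[
\mathbb P\big(|\mathbf P_n(z)|<r_0r_n\big)\le\frac{C_{\mathrm{KR}}}{\sqrt{|A_n|\,(1-\mathcal Q(\xi_0;r_0))}}=O(n^{-d/2})\to0 .
\]
This gives the lower bound, since $\frac1n\log(r_0r_n)=I_{P,f}(s)-\varepsilon/2+O(1/n)$.

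Combining the two bounds, and using $\Phi_{P,f}(z)=I_{P,f}(\Log z)$ on $(\C^*)^d$ from \eqref{eq:PhiPf-torus-background}, yields \eqref{eq:pot-conv-prob}. The only delicate points are the lattice-counting near $t^*$, which may lie on $\partial P$ and so requires the convexity argument, and the fixed-scale concentration bound. The $O(n^{-d/2})$ rate is also what later feeds the summability condition in Theorem~\ref{thm:main}.
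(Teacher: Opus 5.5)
Your proposal is correct and follows essentially the same route as the paper. The upper bound comes from Lemma~\ref{lem:xi-subexp-prob}, the profile condition and the lattice count. The lower bound restricts to a cube of lattice points near the maximizer (reached through an interior point), drops the remaining independent terms via the concentration inequality, and applies Kolmogorov--Rogozin to obtain the $O(n^{-d/2})$ bound. The only cosmetic difference is how the non-degeneracy scale is handled: you absorb it with a fixed factor $r_0$, whereas the paper normalizes the coefficients and uses the shrinking radius $e^{-\eta n}$, which eventually falls below $r_0$.
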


\begin{proof}
Fix $z\in(\C^*)^d$ and set $s=\Log(z)=(\log|z_1|,\dots,\log|z_d|)\in\R^d$.
By the definition of $\Phi_{P,f}$ on $(\C^*)^d$ (cf.\ \eqref{eq:PhiPf-torus}), we have
\begin{equation*}
\Phi_{P,f}(z)=I_{P,f}(s),\qquad
I_{P,f}(s)=\sup_{t\in P}\big(\langle s,t\rangle+\log f(t)\big).
\end{equation*}
Set
\begin{equation*}
h_s(t):=\langle s,t\rangle+\log f(t)=\langle s,t\rangle-u(t),\qquad t\in P,
\end{equation*}
so that $I_{P,f}(s)=\sup_{t\in P}h_s(t)$. We also write $|\alpha|:=|\alpha|_1=\alpha_1+\cdots+\alpha_d$.

\smallskip\noindent\emph{Upper bound in probability.}
Assume \eqref{eq:logtail} and fix $\varepsilon>0$. Set
\begin{equation*}
E_n:=\Big\{\max_{\alpha\in nP\cap\Z_+^d}\log(1+|\xi_\alpha|)\le \varepsilon n\Big\}.
\end{equation*}
By Lemma~\ref{lem:xi-subexp-prob}, $\lim_{n \to \infty} \mathbb P(E_n^c) = 0$.
By the profile condition \eqref{eq:profile}, there exists $n_0=n_0(\varepsilon)$ such that for all $n\ge n_0$ and all $\alpha\in nP\cap\Z_+^d$,
\begin{equation*}
\frac1n\log|c_{\alpha,n}|\le \log f(\alpha/n)+\varepsilon.
\end{equation*}
For such $n$ and on $E_n$ we have $|\xi_\alpha|\le e^{\varepsilon n}$, hence for every $\alpha\in nP\cap\Z_+^d$,
\begin{equation*}
|\xi_\alpha c_{\alpha,n}z^\alpha|
\le \exp\Big(n\big(\langle \alpha/n,s\rangle+\log f(\alpha/n)+2\varepsilon\big)\Big)
\le \exp\big(n(\Phi_{P,f}(z)+2\varepsilon)\big).
\end{equation*}
Summing over $\alpha\in nP\cap\Z_+^d$ via the triangle inequality and using Lemma~\ref{lem:lattice-growth}, we obtain, for all large $n$ and on $E_n$,
\begin{equation*}
|\mathbf{P}_n(z)|\le |nP\cap\Z_+^d|\,\exp\big(n(\Phi_{P,f}(z)+2\varepsilon)\big)\le C_Pn^d\,\exp\big(n(\Phi_{P,f}(z)+2\varepsilon)\big).
\end{equation*}
Because $C_P n^d \le e^{\varepsilon n}$ for all sufficiently large $n$, we have
\begin{equation}\label{eq:upper-tail-prob}
\lim_{n \to \infty} \mathbb P\!\left(\frac1n\log|\mathbf{P}_n(z)|>\Phi_{P,f}(z)+3\varepsilon\right)
\le \lim_{n \to \infty} \mathbb P(E_n^c) = 0.
\end{equation}

\smallskip\noindent\emph{Lower bound in probability (small-ball estimate).}
Fix $\eta>0$. Since $h_s$ is continuous on the compact set $P$, there exists $t_0\in P$ with
$h_s(t_0)=I_{P,f}(s)$. Choose $\rho>0$ such that with $U:=P\cap B(t_0,\rho)$ one has
\begin{equation}\label{eq:U-good-rev}
h_s(t)\ge I_{P,f}(s)-2\eta\qquad\text{for all }t\in U.
\end{equation}
Let
\begin{equation*}
\mathcal{J}_n(U):=\{\alpha\in nP\cap\Z_+^d:\ \alpha/n\in U\}.
\end{equation*}
Since $P$ is a convex body, $\mathrm{int}(P)$ is dense in $P$, hence $U\cap\mathrm{int}(P)\neq\varnothing$.
Fix $t_1\in U\cap\mathrm{int}(P)$. In particular, as $P\subset\R_+^d$, we have $(t_1)_j>0$ for all $j$.
Since $t_1\in\mathrm{int}(P)$, there exists $r>0$ such that $B(t_1,r)\subset P$.
Choose $\kappa>0$ so small that $\kappa<\min_{1\le j\le d}(t_1)_j$ and
\begin{equation*}
t_1+[-\kappa,\kappa]^d\subset B(t_1,r)\cap B(t_0,\rho)\subset U,
\end{equation*}
and set $Q_U:=t_1+[-\kappa,\kappa]^d\subset U$.
Then $nQ_U\cap\Z_+^d\subset \mathcal{J}_n(U)$ for all $n$.
Moreover, for all large $n$ one has $|nQ_U\cap\Z_+^d|\ge (2\kappa n-1)^d$, hence there exist $c_U>0$ and $n_1$ such that
\begin{equation}\label{eq:JnU-size-rev}
|\mathcal{J}_n(U)|\ge c_U n^d,\qquad n\ge n_1.
\end{equation}

By \eqref{eq:profile}, there exists $n_2=n_2(\eta)$ such that for all $n\ge n_2$ and all $\alpha\in nP\cap\Z_+^d$,
\begin{equation}\label{eq:coef-comp-rev}
\frac1n\log|c_{\alpha,n}|\ge \log f(\alpha/n)-\eta.
\end{equation}
Hence for $n\ge n_2$ and $\alpha\in\mathcal{J}_n(U)$, using \eqref{eq:U-good-rev} and $|z|^\alpha=e^{\langle s,\alpha\rangle}$ with $s=\Log(z)$, we obtain
\begin{equation*}
\frac1n\log\big(|c_{\alpha,n}||z|^\alpha\big)
\ge \big(\log f(\alpha/n)-\eta\big)+\Big\langle s,\frac{\alpha}{n}\Big\rangle
= h_s(\alpha/n)-\eta\ge I_{P,f}(s)-3\eta.
\end{equation*}
Multiplying by $n$ and exponentiating both sides, we find that for all such $\alpha$,
\begin{equation}\label{eq:large-coefs-rev}
|c_{\alpha,n}||z|^\alpha \ge \exp\big(n(I_{P,f}(s)-3\eta)\big).
\end{equation}

Define the normalized polynomial
\begin{equation*}
\widetilde{\mathbf{P}}_n(z):=e^{-n(I_{P,f}(s)-3\eta)}\mathbf{P}_n(z)
=\sum_{\alpha\in nP\cap\Z_+^d}\xi_\alpha a_{\alpha,n},
\qquad\text{where}\quad
a_{\alpha,n}:=c_{\alpha,n}z^\alpha\,e^{-n(I_{P,f}(s)-3\eta)}.
\end{equation*}
We decompose this as $\widetilde{\mathbf{P}}_n(z)=S_n+T_n$, where
\begin{equation*}
S_n:=\sum_{\alpha\in\mathcal{J}_n(U)}\xi_\alpha a_{\alpha,n}
\qquad\text{and}\qquad
T_n:=\sum_{\alpha\in (nP\cap\Z_+^d)\setminus\mathcal{J}_n(U)}\xi_\alpha a_{\alpha,n}.
\end{equation*}
Then $S_n$ and $T_n$ are independent, and by \eqref{eq:large-coefs-rev} we have $|a_{\alpha,n}|\ge 1$ for $\alpha\in\mathcal{J}_n(U)$.
Let $r_n:=e^{-\eta n}$. Using the concentration function \eqref{eq:conc-fn} and \eqref{eq:conc-basic-ineq} (and the independence of $S_n$ and $T_n$), we obtain
\begin{equation*}
\mathbb{P}\big(|\widetilde{\mathbf{P}}_n(z)|\le r_n\big)
\le \mathcal Q(\widetilde{\mathbf{P}}_n(z);r_n)
=\mathcal Q(S_n+T_n;r_n)\le \mathcal Q(S_n;r_n).
\end{equation*}
Equivalently,
\begin{equation*}
\mathbb{P}\big(|\mathbf{P}_n(z)|\le e^{n(I_{P,f}(s)-4\eta)}\big)
\le \mathcal Q(S_n;r_n).
\end{equation*}

Applying the Kolmogorov--Rogozin inequality (Theorem~\ref{thm:KR}) to the independent family $(\xi_\alpha a_{\alpha,n})_{\alpha\in\mathcal{J}_n(U)}$ yields
\begin{equation*}
\mathcal Q(S_n;r_n)\le
\frac{C_{\mathrm{KR}}}{\sqrt{\sum_{\alpha\in\mathcal{J}_n(U)}\bigl(1-\mathcal Q(\xi_\alpha a_{\alpha,n};r_n)\bigr)}}.
\end{equation*}
Since $|a_{\alpha,n}|\ge 1$ and $\mathcal Q(aX;r)=\mathcal Q(X;r/|a|)$,
\begin{equation*}
\mathcal Q(\xi_\alpha a_{\alpha,n};r_n)
=\mathcal Q\!\left(\xi_0;\frac{r_n}{|a_{\alpha,n}|}\right)
\le \mathcal Q(\xi_0;r_n).
\end{equation*}
Non-degeneracy of $\xi_0$ implies that there exists $r_0>0$ such that $\mathcal Q(\xi_0;r_0)<1$.
Since $r\mapsto \mathcal Q(\xi_0;r)$ is nondecreasing, for all large $n$ we have $r_n\le r_0$ and hence
$\mathcal Q(\xi_0;r_n)\le \mathcal Q(\xi_0;r_0)$.
Setting $q:=1-\mathcal Q(\xi_0;r_0)>0$, we get $1-\mathcal Q(\xi_0;r_n)\ge q$ for all large $n$, and therefore,
using \eqref{eq:JnU-size-rev},
\begin{equation}\label{eq:lower-prob-rev}
\mathbb{P}\big(|\mathbf{P}_n(z)|\le e^{n(I_{P,f}(s)-4\eta)}\big)
\le \frac{C'}{n^{d/2}},
\end{equation}
where $C'>0$ is a constant independent of $n$.
Because $d \ge 1$, the right-hand side goes to zero. Equivalently,
\begin{equation}\label{eq:lower-tail-rev}
\lim_{n \to \infty} \mathbb{P}\!\left(\frac1n\log|\mathbf{P}_n(z)|\le I_{P,f}(s)-4\eta\right) = 0.
\end{equation}

\smallskip\noindent\emph{Completion of the proof of \eqref{eq:pot-conv-prob}.}
Assume \eqref{eq:logtail}. Fix $\varepsilon>0$. By \eqref{eq:upper-tail-prob},
\begin{equation*}
\lim_{n \to \infty} \mathbb P\!\left(\frac1n\log|\mathbf{P}_n(z)|>\Phi_{P,f}(z)+3\varepsilon\right) = 0.
\end{equation*}
Moreover, by \eqref{eq:lower-tail-rev} with $4\eta = \varepsilon$,
\begin{equation*}
\lim_{n \to \infty} \mathbb P\!\left(\frac1n\log|\mathbf{P}_n(z)|<\Phi_{P,f}(z)-\varepsilon\right) = 0.
\end{equation*}
Therefore $\frac1n\log|\mathbf{P}_n(z)|\to\Phi_{P,f}(z)$ in probability, proving \eqref{eq:pot-conv-prob}.
\end{proof}

\medskip
We will also need an almost sure pointwise statement along the extracted sparse subsubsequence.

\begin{lem}\label{lem:pointwise-as-goodsubseq}
Assume \eqref{eq:profile} and \eqref{eq:logtail}. Let $(n_k)_{k\ge1}$ be any increasing sequence of integers and
let $(m_j)_{j\ge1}$ be a further subsequence provided by Lemma~\ref{lem:good-subsequence}, with the associated event
$\Omega_\xi$ of \eqref{eq:good-subseq-bc}. Then for every fixed $z\in(\mathbb{C}^*)^d$,
\begin{equation*}
\frac1{m_j}\log|\mathbf{P}_{m_j}(z,\omega)|\longrightarrow \Phi_{P,f}(z)\qquad\text{almost surely as }j\to\infty.
\end{equation*}
\end{lem}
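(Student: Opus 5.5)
Fix $z\in(\C^*)^d$, set $s=\Log(z)$, and prove the upper and lower bounds separately along $(m_j)$. Then apply Borel--Cantelli to each.

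\emph{Upper bound.} Work on $\Omega_\xi$, which has probability one by Lemma~\ref{lem:good-subsequence}. For all large $j$ we have $\max_{\alpha\in m_jP\cap\Z_+^d}\log(1+|\xi_\alpha|)\le m_j/j$, so $|\xi_\alpha|\le e^{m_j/j}$. Fix $\varepsilon>0$. By \eqref{eq:profile}, for $j$ large and every $\alpha\in m_jP\cap\Z_+^d$,
\[
\frac{1}{m_j}\log|c_{\alpha,m_j}|\le \log f(\alpha/m_j)+\varepsilon .
\]
Repeating the termwise estimate from the proof of Theorem~\ref{thm:KZ-P-Theorem41} then gives
\[
|\mathbf P_{m_j}(z)|\le C_P m_j^d\exp\big(m_j(\Phi_{P,f}(z)+\varepsilon+1/j)\big).
\]
Since $\frac{1}{m_j}\log(C_Pm_j^d)\to0$ and $1/j\to0$, we get $\limsup_j \frac1{m_j}\log|\mathbf P_{m_j}(z)|\le\Phi_{P,f}(z)+\varepsilon$ on $\Omega_\xi$. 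Intersecting these events over rational $\varepsilon$ yields the almost sure upper bound. The point is that $\varepsilon$ is deterministic and the random input is only the event $\Omega_\xi$, which does not depend on $\varepsilon$.

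\emph{Lower bound.} Fix $\eta>0$. The small-ball estimate \eqref{eq:lower-prob-rev} in the proof of Theorem~\ref{thm:KZ-P-Theorem41} uses only non-degeneracy of $\xi_0$ and \eqref{eq:profile}, not the tail condition. It gives, for $n$ large,
\[
\mathbb P\big(|\mathbf P_n(z)|\le e^{n(\Phi_{P,f}(z)-4\eta)}\big)\le C'n^{-d/2}.
\]
Apply this with $n=m_j$. Since $m_j\ge j^3$ by \eqref{eq:good-subseq-choice}, we have $m_j^{-d/2}\le j^{-3d/2}$. As $d\ge1$, the exponent satisfies $3d/2\ge 3/2>1$, so the bounds are summable in $j$. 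Borel--Cantelli then gives
\[
\liminf_j \frac1{m_j}\log|\mathbf P_{m_j}(z)|\ge \Phi_{P,f}(z)-4\eta
\]
almost surely. Taking $\eta=1/k$ and intersecting over $k\in\mathbb N$ gives the almost sure lower bound $\liminf\ge\Phi_{P,f}(z)$.

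Combining the two bounds on the intersection of $\Omega_\xi$ with these countably many full-measure events proves the lemma. The main point to check is that $m_j\ge j^3$ indeed makes the Kolmogorov--Rogozin bound summable in every dimension. This is exactly why Lemma~\ref{lem:good-subsequence} builds that growth into the extraction.
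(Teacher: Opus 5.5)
Your proposal is correct and follows the paper's proof essentially step for step. The upper bound is taken on $\Omega_\xi$ via \eqref{eq:good-subseq-bc} and \eqref{eq:profile}. The lower bound comes from the Kolmogorov--Rogozin estimate \eqref{eq:lower-prob-rev} at $n=m_j$, made summable by $m_j\ge j^3$ and closed with Borel--Cantelli and a countable intersection over $\eta$; as you note, that estimate needs only non-degeneracy of $\xi_0$ and \eqref{eq:profile}, not the tail condition.
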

\begin{proof}
Fix $z\in(\mathbb{C}^*)^d$ and an arbitrary $\eta>0$. Set $s:=\operatorname{Log}(z)\in\mathbb{R}^d$ and denote $u_n:=\frac1n\log|\mathbf{P}_n|$. 
Since the extracted subsequence satisfies $m_j\ge j^3$, we have $\sum_{j=1}^\infty m_j^{-d/2}<\infty$.

Now we first establish the lower bound. The quantitative estimate \eqref{eq:lower-prob-rev} implies that for all large $j$, we have
\begin{equation*}
\mathbb{P}\!\left(u_{m_j}(z)\le I_{P,f}(s)-4\eta\right)\le \frac{C'}{m_j^{d/2}}.
\end{equation*}
Hence $\sum_{j=1}^\infty \mathbb{P}(u_{m_j}(z)\le I_{P,f}(s)-4\eta)<\infty$, and the Borel--Cantelli lemma yields
\begin{equation}\label{eq:lb-liminf-mj}
\liminf_{j\to\infty}u_{m_j}(z,\omega)\ge I_{P,f}(s)-4\eta=\Phi_{P,f}(z)-4\eta
\qquad\text{almost surely.}
\end{equation}

For the corresponding upper bound, fix $\omega\in\Omega_\xi$. By \eqref{eq:good-subseq-bc}, for all large $j$ and all $\alpha\in m_jP\cap\mathbb{Z}_+^d$ we have $|\xi_\alpha(\omega)|\le e^{m_j/j}$. Moreover, by \eqref{eq:profile} there exists $j_0$ such that for $j\ge j_0$ and all $\alpha\in m_jP\cap\mathbb{Z}_+^d$,
\begin{equation*}
\frac1{m_j}\log|c_{\alpha,m_j}|\le \log f(\alpha/m_j)+\eta.
\end{equation*}
Thus, for $j\ge j_0$ and all $\alpha\in m_jP\cap\mathbb{Z}_+^d$,
\begin{equation*}
|\xi_\alpha(\omega)c_{\alpha,m_j}z^\alpha|
\le \exp\Big(m_j\big(\langle s,\alpha/m_j\rangle+\log f(\alpha/m_j)+\eta\big)\Big)\,e^{m_j/j}
\le \exp\big(m_j(I_{P,f}(s)+\eta)\big)\,e^{m_j/j},
\end{equation*}
and therefore via the triangle inequality,
\begin{equation*}
|\mathbf{P}_{m_j}(z,\omega)|
\le |m_jP\cap\mathbb{Z}_+^d|\,\exp\big(m_j(I_{P,f}(s)+\eta)\big)\,e^{m_j/j}.
\end{equation*}
Taking logarithms and dividing by $m_j$ gives, for all large $j$,
\begin{equation}\label{eq:ub-pointwise-mj}
u_{m_j}(z,\omega)\le \Phi_{P,f}(z)+\eta+\frac1j+\frac1{m_j}\log|m_jP\cap\mathbb{Z}_+^d|.
\end{equation}
Hence $\limsup_{j\to\infty}u_{m_j}(z,\omega)\le \Phi_{P,f}(z)+\eta$ for every $\omega\in\Omega_\xi$.

Since $\eta>0$ is arbitrary, we may take a countable sequence $\eta_k \downarrow 0$. Intersecting the probability-one event $\Omega_\xi$ with the corresponding almost sure events from the lower bound \eqref{eq:lb-liminf-mj}, we deduce that almost surely,
\begin{equation*}
\Phi_{P,f}(z) \le \liminf_{j\to\infty}u_{m_j}(z,\omega) \le \limsup_{j\to\infty}u_{m_j}(z,\omega) \le \Phi_{P,f}(z).
\end{equation*}
This shows $u_{m_j}(z,\omega)\to \Phi_{P,f}(z)$ almost surely, completing the proof.
\end{proof}
\medskip
Next we prove the following lemmas which will be used to upgrade pointwise convergence to $L^1_{\mathrm{loc}}$ convergence in probability.

\begin{lem}\label{lem:fubini-swap1}
Let $v_n:\C^d\times\Omega\to[-\infty,\infty)$ be jointly measurable and let $v:\C^d\to[-\infty,\infty)$ be measurable.
Assume that for Lebesgue-a.e.\ $z\in\C^d$,
\begin{equation*}
v_n(z,\omega)\to v(z)\qquad\text{almost surely in }\omega.
\end{equation*}
Then there exists an event $\Omega_0\subset\Omega$ with $\mathbb P(\Omega_0)=1$ such that for every $\omega\in\Omega_0$,
\begin{equation*}
v_n(z,\omega)\to v(z)\qquad\text{for Lebesgue-a.e.\ } z\in\C^d.
\end{equation*}
\end{lem}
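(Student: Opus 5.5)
The plan is a standard Fubini--Tonelli argument applied to the indicator of the set where convergence fails. First I would introduce the exceptional set
\[
N:=\big\{(z,\omega)\in\C^d\times\Omega:\ v_n(z,\omega)\not\to v(z)\big\}.
\]
I would check that $N$ is measurable for the product $\sigma$-algebra $\mathcal B(\C^d)\otimes\mathcal F$. Since $v_n$ is jointly measurable and $v$ is measurable, $(z,\omega)\mapsto v(z)$ is jointly measurable as well. Convergence of a sequence in $[-\infty,\infty)$ is a countable Boolean combination of events. One way to see this is to compose with a fixed homeomorphism $\phi:[-\infty,\infty)\to[-1,1)$, for instance $\phi(x)=\tanh x$ with $\phi(-\infty)=-1$. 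Then
\[
N=\bigcup_{k\ge1}\bigcap_{m\ge1}\bigcup_{n\ge m}\big\{|\phi(v_n(z,\omega))-\phi(v(z))|>1/k\big\},
\]
which is product-measurable. Here one has to be careful at $v(z)=-\infty$, but using $\phi$ handles it uniformly.

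Next I would apply Tonelli's theorem to $\mathbf 1_N$ with respect to $\lambda\otimes\mathbb P$, where $\lambda$ is Lebesgue measure on $\C^d$. Both measures are $\sigma$-finite, so this is legitimate. For each fixed $z$, the section $N_z=\{\omega:(z,\omega)\in N\}$ satisfies $\mathbb P(N_z)=0$ for $\lambda$-a.e.\ $z$, by hypothesis. Hence
\[
(\lambda\otimes\mathbb P)(N)=\int_{\C^d}\mathbb P(N_z)\,d\lambda(z)=0.
\]
Computing the same integral in the other order gives
\[
\int_\Omega \lambda(N^\omega)\,d\mathbb P(\omega)=0,
\]
where $N^\omega=\{z:(z,\omega)\in N\}$. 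Therefore $\lambda(N^\omega)=0$ for $\mathbb P$-a.e.\ $\omega$. I would set $\Omega_0:=\{\omega:\lambda(N^\omega)=0\}$, which is measurable by Tonelli. For $\omega\in\Omega_0$ we get $v_n(z,\omega)\to v(z)$ for a.e.\ $z$, which is the conclusion.

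The only delicate point is the product measurability of $N$, including the handling of the value $-\infty$. The homeomorphism trick above disposes of it, and no analytic obstacle is expected beyond that.
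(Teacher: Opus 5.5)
Your proposal is correct and follows the paper's proof: both apply Tonelli to the indicator of the set where $v_n(z,\omega)\not\to v(z)$ (the paper uses its complement $\mathcal A$), integrate first in $\omega$ to get zero, and then swap the order to get $\mathrm{Leb}(N^\omega)=0$ for $\mathbb P$-a.e.\ $\omega$. Your use of $\phi=\tanh$ to write the non-convergence set as a countable Boolean combination of measurable sets, including the case $v(z)=-\infty$, proves product measurability, which the paper only asserts.
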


\begin{proof}
Let
\begin{equation*}
\mathcal A:=\{(z,\omega)\in \C^d\times\Omega:\ v_n(z,\omega)\to v(z)\}.
\end{equation*}
Then $\mathcal A$ is measurable. For $z\in\C^d$ write $\mathcal A_z:=\{\omega:\ (z,\omega)\in\mathcal A\}$ and for $\omega\in\Omega$
write $\mathcal A_\omega:=\{z:\ (z,\omega)\in\mathcal A\}$. The assumption states that $\mathbb{P}(\mathcal A_z^c)=0$ for Lebesgue-a.e.\ $z\in\C^d$.
By Tonelli's theorem applied to the indicator function $\mathbf{1}_{\mathcal{A}^c}$,
\begin{equation*}
0=\int_{\C^d}\mathbb{P}(\mathcal A_z^c)\,d\mathrm{Leb}(z)
=\int_{\C^d}\int_\Omega \mathbf{1}_{\mathcal{A}^c}(z,\omega)\,d\mathbb{P}(\omega)\,d\mathrm{Leb}(z)
=\int_\Omega \mathrm{Leb}(\mathcal A_\omega^c)\,d\mathbb{P}(\omega).
\end{equation*}
Hence $\mathrm{Leb}(\mathcal A_\omega^c)=0$ for $\mathbb{P}$-a.e.\ $\omega$.
Set $\Omega_0:=\{\omega\in\Omega:\ \mathrm{Leb}(\mathcal A_\omega^c)=0\}$. Then $\mathbb{P}(\Omega_0)=1$, and for every $\omega\in\Omega_0$ we have $\mathrm{Leb}(\mathcal A_\omega^c)=0$, which means $v_n(z,\omega)\to v(z)$ for Lebesgue-a.e.\ $z\in\C^d$.
\end{proof}

\begin{lem}[Eventual non-vanishing]\label{lem:eventual-nonzero}
Let $(\xi_\alpha)_{\alpha\in\Z_+^d}$ be i.i.d.\ complex-valued, non-degenerate random variables.
Then almost surely $\mathbf{P}_n(\cdot,\omega)\not\equiv0$ for all sufficiently large $n$.
\end{lem}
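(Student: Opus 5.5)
We need to show that almost surely $\mathbf{P}_n(\cdot,\omega)\not\equiv 0$ for all large $n$. The plan is to reduce this to a statement about a single coefficient. For large $n$, the profile condition \eqref{eq:profile} gives $c_{\alpha,n}\neq 0$ for every $\alpha\in nP\cap\Z_+^d$. Since the monomials $z^\alpha$ are linearly independent, the identity $\mathbf{P}_n\equiv 0$ holds iff $\xi_\alpha c_{\alpha,n}=0$ for all such $\alpha$, i.e.\ iff $\xi_\alpha=0$ for all $\alpha\in nP\cap\Z_+^d$.

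The point is that the index sets are nested: $0\in P$ and $P$ is convex, so $nP\subset (n+1)P$. Also, as computed in the proof of Lemma~\ref{lem:lattice-growth}, the sets $nP\cap\Z_+^d$ exhaust a region of size $\asymp n^d$. I would therefore bound crudely
\[
\{\mathbf{P}_n\equiv 0\}\subset\{\xi_\alpha=0\ \ \forall \alpha\in nP\cap\Z_+^d\}.
\]
There are two cases depending on $p:=\mathbb P(\xi_0=0)$. If $p=0$, then since $0\in nP\cap\Z_+^d$ for every $n$, the event is contained in $\{\xi_0=0\}$, which is null. If $p>0$, non-degeneracy gives $p<1$, because otherwise $\xi_0\equiv 0$ a.s.\ would be constant. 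Independence then gives
\[
\mathbb P(\mathbf{P}_n\equiv 0)\le p^{|nP\cap\Z_+^d|}\le p^{c_Pn^d}
\]
for large $n$ by Lemma~\ref{lem:lattice-growth}. This is summable in $n$, so Borel--Cantelli shows that $\mathbf{P}_n\equiv0$ happens only finitely often almost surely.

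Alternatively, there is a cleaner route via nesting. Let $N:=\bigcup_n (nP\cap\Z_+^d)$, which is an infinite set. The event $\{\mathbf{P}_n\equiv 0 \text{ infinitely often}\}$ is contained in $\{\xi_\alpha=0\ \forall\alpha\in N\}$, since each fixed $\alpha\in N$ lies in $nP$ for all large $n$. That event has probability $\lim_k p^k=0$.

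The only mild obstacle is being careful that "for all sufficiently large $n$" accounts for the finitely many $n$ where some $c_{\alpha,n}$ may vanish. Those $n$ are deterministic and finite in number, so they are irrelevant.
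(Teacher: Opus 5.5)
Your main argument is correct and is essentially the paper's proof. For large $n$ the event $\{\mathbf{P}_n\equiv0\}$ reduces to $\{\xi_\alpha=0\ \forall\alpha\in nP\cap\Z_+^d\}$, whose probability $p^{|nP\cap\Z_+^d|}\le p^{c_Pn^d}$ is summable, and Borel--Cantelli finishes. The paper does not need your case split, since $p=0$ simply makes these terms vanish.

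Your alternative nesting argument is a genuinely different route and is also valid. It uses $nP\subset(n+1)P$ to get $\{\mathbf{P}_n\equiv0\text{ i.o.}\}\subset\{\xi_\alpha=0\ \forall\alpha\in\bigcup_n nP\cap\Z_+^d\}$, an event of probability $0$ since the union is infinite and $p<1$. This dispenses with both the quantitative lattice count and Borel--Cantelli.
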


\begin{proof}
Since $f>0$ on $P$, the profile condition \eqref{eq:profile} forces $c_{\alpha,n}\neq0$ for every $\alpha\in nP\cap\Z_+^d$ once $n$ is sufficiently large. Indeed, if $c_{\alpha,n}=0$, then $\frac1n\log|c_{\alpha,n}|=-\infty$, which contradicts the uniform approximation of $\log f$ on the compact set $P$.
For such $n$, because the monomials are linearly independent, we have
\begin{equation*}
\mathbf{P}_n\equiv0 \iff \xi_\alpha=0\ \text{for all }\alpha\in nP\cap\Z_+^d.
\end{equation*}
Set $p_0:=\mathbb{P}(\xi_0=0)\in[0,1)$. By the independence of the random variables $(\xi_\alpha)$,
\begin{equation*}
\mathbb{P}(\mathbf{P}_n\equiv0)=p_0^{\,|nP\cap\Z_+^d|}.
\end{equation*}
By Lemma~\ref{lem:lattice-growth}, there exist $c>0$ and $n_0$ such that $|nP\cap\Z_+^d|\ge c n^d$ for all $n\ge n_0$. Hence
\begin{equation*}
\sum_{n=1}^\infty \mathbb{P}(\mathbf{P}_n\equiv0)\le \sum_{n<n_0}1+\sum_{n\ge n_0} p_0^{c n^d}<\infty.
\end{equation*}
Borel--Cantelli lemma implies that the event $\{\mathbf{P}_n\equiv0\}$ occurs only finitely many times almost surely.
\end{proof}

We now utilize the preceding results to prove Theorem~\ref{thm:potentials} and Corollary~\ref{cor:currents}.

\begin{proof}[Proof of Theorem~\ref{thm:potentials}]
Assume \eqref{eq:logtail}. We use the convention from the Introduction, namely we set $u_n:=0$ and $[Z_{\mathbf P_n}]:=0$ on the event $\{\mathbf P_n\equiv 0\}$; this does not affect any limit statement by Lemma~\ref{lem:eventual-nonzero}.

We emphasize that Theorem~\ref{thm:KZ-P-Theorem41} provides only \emph{pointwise} convergence in probability. 
To upgrade this to convergence in probability in the separable metric space $L^1_{\mathrm{loc}}(\mathbb{C}^d)$, we invoke the standard subsequence principle. Specifically, it suffices to show that every increasing sequence $(n_k)$ admits a further subsequence $(m_j)$ along which
\begin{equation}\label{eq:L1-subseq-property}
u_{m_j}\longrightarrow \Phi_{P,f}\qquad\text{almost surely in }L^1_{\mathrm{loc}}(\mathbb{C}^d).
\end{equation}
Establishing this requires combining the almost sure pointwise convergence along a sparse extracted subsubsequence (Lemma~\ref{lem:pointwise-as-goodsubseq}) with local uniform upper bounds on compact sets.

Fix an increasing sequence $(n_k)$. By Lemma~\ref{lem:good-subsequence} there exist a subsequence $(m_j)$ with $m_j\ge j^3$ and an event $\Omega_\xi\subset\Omega$ with $\mathbb{P}(\Omega_\xi)=1$ such that
\begin{equation}\label{eq:good-subseq-used}
\lim_{j\to\infty}\frac1{m_j}\max_{\alpha\in m_jP\cap\mathbb{Z}_+^d}\log\big(1+|\xi_\alpha|\big)=0
\qquad\text{on }\Omega_\xi.
\end{equation}

\smallskip\noindent\emph{Step 1: uniform upper bounds on compacts.}
Let $K\subset\mathbb{C}^d$ be compact and fix $\eta>0$. Set
\begin{equation*}
R_{K,i}:=\max\Big\{1,\sup_{z\in K}|z_i|\Big\},\qquad S_K:=(\log R_{K,1},\dots,\log R_{K,d})\in\mathbb{R}_+^d,
\qquad B_K:=I_{P,f}(S_K).
\end{equation*}
By the profile condition \eqref{eq:profile} there exists $N_\eta$ such that for all $n\ge N_\eta$ and all $\alpha\in nP\cap\mathbb{Z}_+^d$,
\begin{equation}\label{eq:profile-upper-used}
|c_{\alpha,n}|\le \exp\big(n(\log f(\alpha/n)+\eta)\big).
\end{equation}
Fix $\omega\in\Omega_\xi$. By \eqref{eq:good-subseq-used} there exists $j_0=j_0(\omega,\eta)$ such that for all $j\ge j_0$ and all $\alpha\in m_jP\cap\mathbb{Z}_+^d$,
\begin{equation}\label{eq:xi-upper-used}
|\xi_\alpha(\omega)|\le \exp(m_j\eta).
\end{equation}
Using \eqref{eq:profile-upper-used}, \eqref{eq:xi-upper-used} and $|z^\alpha|\le \prod_i R_{K,i}^{\alpha_i}= \exp(\langle \alpha,S_K\rangle)$ for $z\in K$, we obtain for $m_j \ge \max\{N_\eta, m_{j_0}\}$:
\begin{equation*}
\sup_{z\in K}|\mathbf P_{m_j}(z,\omega)|
\le |m_jP\cap\mathbb{Z}_+^d|\exp\big(m_j(B_K+2\eta)\big).
\end{equation*}
Hence,
\begin{equation}\label{eq:ub-K-mj}
\sup_{z\in K}u_{m_j}(z,\omega)
\le B_K+2\eta+\frac1{m_j}\log|m_jP\cap\mathbb{Z}_+^d|.
\end{equation}
Because the first finitely many terms are upper semicontinuous and bounded above on the compact set $K$, by \eqref{eq:good-subseq-used} we conclude that for every $\omega\in\Omega_\xi$, the entire family $(u_{m_j}(\cdot,\omega))_{j\ge1}$ is locally uniformly bounded above on $\mathbb{C}^d$.

\smallskip\noindent\emph{Step 2: almost everywhere convergence along $(m_j)$.}
For every fixed $z\in(\mathbb{C}^*)^d$, Lemma~\ref{lem:pointwise-as-goodsubseq} yields
\begin{equation*}
\mathbb{P}\big(\{\omega:\ u_{m_j}(z,\omega)\to \Phi_{P,f}(z)\}\big)=1.
\end{equation*}
Moreover, $(z,\omega)\mapsto u_{m_j}(z,\omega)$ is jointly measurable on $\mathbb{C}^d\times\Omega$ for each $j$ (being continuous in $z$ and measurable in $\omega$).
Applying Lemma~\ref{lem:fubini-swap1} we obtain an event $\Omega_0\subset\Omega$ with $\mathbb{P}(\Omega_0)=1$
such that for every $\omega\in\Omega_0$,
\begin{equation*}
u_{m_j}(z,\omega)\to \Phi_{P,f}(z)\quad\text{for Lebesgue-a.e. }z\in(\mathbb{C}^*)^d
\quad(\text{hence for a.e. }z\in\mathbb{C}^d).
\end{equation*}

\smallskip\noindent\emph{Step 3: almost sure $L^1_{\mathrm{loc}}$-convergence along $(m_j)$.}
Fix $\omega\in\Omega_\xi\cap\Omega_0$. For each $j$, the function $u_{m_j}(\cdot,\omega)$ is plurisubharmonic on $\mathbb{C}^d$ (using our convention $u_{m_j} \equiv 0$ on the exceptional event $\{\mathbf{P}_{m_j} \equiv 0\}$).

Moreover, Step~1 yields a local uniform upper bound on compacts for the family $(u_{m_j}(\cdot,\omega))_{j\ge1}$, and Step~2 gives
$u_{m_j}(\cdot,\omega)\to \Phi_{P,f}$ for Lebesgue-a.e.\ $z\in\mathbb{C}^d$. 
Recall that $\Phi_{P,f}\in\operatorname{PSH}(\mathbb{C}^d)$ and $\Phi_{P,f}\not\equiv-\infty$ (as it is finite on $(\mathbb{C}^*)^d$). Applying Corollary~\ref{cor:psh-ae-to-L1} we obtain
\begin{equation*}
u_{m_j}(\cdot,\omega)\longrightarrow \Phi_{P,f}\qquad\text{in }L^1_{\mathrm{loc}}(\mathbb{C}^d).
\end{equation*}
This proves \eqref{eq:L1-subseq-property}, and hence assertion \eqref{eq:L1prob} follows.
\end{proof}

\begin{proof}[Proof of Corollary~\ref{cor:currents}]
For each $n$, on the event $\{\mathbf{P}_n\not\equiv0\}$ we have
$u_n=\frac1n\log|\mathbf{P}_n|\in\operatorname{PSH}(\mathbb{C}^d)$, hence by the Poincar\'e--Lelong formula,
\begin{equation*}
dd^c u_n=\frac1n\,dd^c\log|\mathbf{P}_n|=\frac1n[Z_{\mathbf{P}_n}].
\end{equation*}
On the exceptional event $\{\mathbf{P}_n\equiv0\}$, both sides are $0$ by our convention. Thus the identity $dd^c u_n=\frac1n[Z_{\mathbf{P}_n}]$
holds everywhere. By Lemma~\ref{lem:eventual-nonzero}, $\{\mathbf{P}_n\equiv0\}$ occurs only finitely many times almost surely, so it does not affect any asymptotic limits.

\smallskip\noindent\emph{(i) Almost sure convergence along further subsequences.}
Let $(n_k)$ be an arbitrary subsequence. By Theorem~\ref{thm:potentials} and the subsequence principle for the metric space $(L^1_{\mathrm{loc}}(\mathbb{C}^d),d_{L^1_{\mathrm{loc}}})$, there exists a further subsequence $(n_{k_j})$ such that $u_{n_{k_j}}\to \Phi_{P,f}$ almost surely in $L^1_{\mathrm{loc}}(\mathbb{C}^d)$. The continuity of $dd^c:L^1_{\mathrm{loc}}(\mathbb{C}^d)\to \mathcal{D}'^{\,1,1}(\mathbb{C}^d)$ then directly yields
\begin{equation*}
\frac1{n_{k_j}}[Z_{\mathbf{P}_{n_{k_j}}}]
= dd^c u_{n_{k_j}}
\longrightarrow dd^c\Phi_{P,f}
\qquad \text{almost surely in the weak sense of currents.}
\end{equation*}

\smallskip\noindent\emph{(ii) Convergence in probability in the weak topology of currents.}
Let $\mathcal{O}\subset\mathcal{D}'^{\,1,1}(\mathbb{C}^d)$ be an open neighborhood of $dd^c\Phi_{P,f}$ for the weak topology of currents.
Because the operator $dd^c:L^1_{\mathrm{loc}}(\mathbb{C}^d)\to \mathcal{D}'^{\,1,1}(\mathbb{C}^d)$
is continuous with respect to the $L^1_{\mathrm{loc}}$ topology and the weak topology of currents, the preimage
\begin{equation*}
U:=(dd^c)^{-1}(\mathcal{O})
=\{v\in L^1_{\mathrm{loc}}(\mathbb{C}^d):\ dd^c v\in\mathcal{O}\}
\end{equation*}
is open in $L^1_{\mathrm{loc}}(\mathbb{C}^d)$ and contains $\Phi_{P,f}$.
By Theorem~\ref{thm:potentials}, $u_n\to \Phi_{P,f}$ in probability 
$L^1_{\mathrm{loc}}(\mathbb{C}^d)$, hence
\begin{equation*}
\mathbb{P}\!\left(\frac1n[Z_{\mathbf{P}_n}]\in \mathcal{O}\right)
=\mathbb{P}\big(dd^c u_n\in \mathcal{O}\big)
=\mathbb{P}(u_n\in U)\xrightarrow[n\to\infty]{}1.
\end{equation*}
\end{proof}

\section{Almost sure convergence}\label{sec:conv-as}

In this section we prove Theorem~\ref{thm:main}. Under the logarithmic moment condition \eqref{eq:logmoment}, we first obtain
almost sure multi-index subexponential bounds on the coefficients. We then combine Borel--Cantelli arguments with standard
plurisubharmonic compactness/uniqueness principles to deduce almost sure weak convergence of the normalized zero currents.

\subsection{Coefficient control under the moment assumption}\label{subsec:as-coeff}

We begin with an almost sure multi-index subexponential bound of random coefficients.

\begin{lem}\label{lem:multiindex-subexp}
Assume \eqref{eq:logmoment}. Then for every $\delta>0$, almost surely there exists a finite random constant
$C_\delta=C_\delta(\omega)\ge 1$ such that, for all $\alpha\in\Z_+^d$,
\begin{equation}\label{eq:xi-subexp}
|\xi_\alpha(\omega)|\le C_\delta(\omega)\,e^{\delta|\alpha|},
\end{equation}
where $|\alpha|:=\alpha_1+\cdots+\alpha_d$.
\end{lem}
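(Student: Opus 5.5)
Fix $\delta>0$. The plan is a Borel--Cantelli argument over the multi-indices, grouped by their total degree. For each $\alpha\in\Z_+^d$ consider the event $A_\alpha:=\{\log(1+|\xi_\alpha|)>\delta|\alpha|\}$. We aim to show $\sum_{\alpha\in\Z_+^d}\mathbb P(A_\alpha)<\infty$. Then almost surely only finitely many $A_\alpha$ occur, and we can set
\[
C_\delta(\omega):=\max\Big\{1,\ \max_{\alpha:\ \omega\in A_\alpha}|\xi_\alpha(\omega)|e^{-\delta|\alpha|}\Big\}<\infty .
\]
Outside these finitely many indices we have $|\xi_\alpha|\le 1+|\xi_\alpha|\le e^{\delta|\alpha|}$, so \eqref{eq:xi-subexp} holds for every $\alpha$.

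To bound the series, use identical distribution and group the indices by $k=|\alpha|$. The number of $\alpha\in\Z_+^d$ with $|\alpha|=k$ is $\binom{k+d-1}{d-1}\le C_d(1+k)^{d-1}$. With $X:=\log(1+|\xi_0|)$ this gives
\[
\sum_{\alpha}\mathbb P(A_\alpha)\le C_d\sum_{k\ge0}(1+k)^{d-1}\,\mathbb P(X>\delta k).
\]
Next we compare this with the moment \eqref{eq:logmoment}. Write $\mathbb P(X>\delta k)=\sum_{m\ge k}\mathbb P(\delta m< X\le \delta(m+1))$ and exchange the order of summation. This yields
\[
\sum_k (1+k)^{d-1}\mathbb P(X>\delta k)=\sum_{m\ge 0}\Big(\sum_{k=0}^m(1+k)^{d-1}\Big)\mathbb P\big(\delta m<X\le\delta(m+1)\big)\le \sum_m (1+m)^d\,\mathbb P\big(\delta m<X\le\delta(m+1)\big).
\]
On the event $\{\delta m<X\le \delta(m+1)\}$ we have $(1+m)^d\le (1+X/\delta)^d$, so the last sum is at most $\E[(1+X/\delta)^d]\le 2^{d-1}(1+\delta^{-d}\E[X^d])<\infty$ by \eqref{eq:logmoment}. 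The $k=0$ term is trivially at most $1$.

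None of these steps is a serious obstacle. The only point needing care is the layer-cake comparison: the polynomial growth $k^{d-1}$ of the level sets of $|\alpha|$ must match exactly the $d$-th moment. This matching is why \eqref{eq:logmoment} (rather than the weaker tail \eqref{eq:logtail}) is the right hypothesis. Finally, $\delta$ is fixed in the statement, so there is no need to intersect over countably many $\delta$. Doing so over $\delta=1/\ell$ would give a single full-measure event valid for all $\delta>0$ simultaneously, should that be needed later.
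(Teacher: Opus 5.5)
Your proof is correct and takes essentially the same route as the paper. Both arguments apply a union bound over the shells $\{|\alpha|=k\}$, which have $O(k^{d-1})$ elements, compare $\sum_k k^{d-1}\mathbb P(\log(1+|\xi_0|)>\delta k)$ with $\E[(\log(1+|\xi_0|))^d]$, and finish with Borel--Cantelli. The differences are cosmetic: you apply Borel--Cantelli to the individual events $A_\alpha$ rather than to the shell maxima, and you use a discrete layer-cake summation instead of the paper's tail-integral identity $\E[Y_0^d]=d\int_0^\infty t^{d-1}\mathbb P(Y_0>t)\,dt$.
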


\begin{proof}
Fix $\delta>0$ and set $Y_\alpha:=\log(1+|\xi_\alpha|)\ge 0$. For each $m\in\mathbb{N}$ define 
\begin{equation*}
\mathcal I_m:=\{\alpha\in\Z_+^d:\ |\alpha|=m\},
\end{equation*}
which has cardinality $N_m:=|\mathcal I_m|=\binom{m+d-1}{d-1}\le C_d\,m^{d-1}.$
Consider the events
\begin{equation*}
A_m:=\Big\{\max_{\alpha\in\mathcal{I}_m} Y_\alpha > \delta m\Big\}
=\bigcup_{\alpha\in\mathcal I_m}\{Y_\alpha>\delta m\}.
\end{equation*}
By the union bound and identical distribution,
\begin{equation*}
\mathbb{P}(A_m)\le \sum_{\alpha\in\mathcal I_m}\mathbb{P}(Y_\alpha>\delta m)
= N_m\,\mathbb{P}(Y_0>\delta m)
\le C_d\,m^{d-1}\mathbb{P}(Y_0>\delta m).
\end{equation*}
Thus it suffices to show $\sum_{m\ge1} m^{d-1}\mathbb{P}(Y_0>\delta m)<\infty$.

Using the tail-integral identity and $\E[Y_0^d]<\infty$, we have
\begin{equation*}
\E[Y_0^d]=d\int_0^\infty t^{d-1}\mathbb{P}(Y_0>t)\,dt<\infty.
\end{equation*}
Since $t\mapsto\mathbb{P}(Y_0>t)$ is decreasing, for $m\ge2$,
\begin{equation*}
\int_{\delta(m-1)}^{\delta m} t^{d-1}\mathbb{P}(Y_0>t)\,dt
\ge \delta\,(\delta(m-1))^{d-1}\mathbb{P}(Y_0>\delta m).
\end{equation*}
Hence for $m\ge2$,
\begin{equation*}
m^{d-1}\mathbb{P}(Y_0>\delta m)
\le \frac{2^{d-1}}{\delta^d}\int_{\delta(m-1)}^{\delta m} t^{d-1}\mathbb{P}(Y_0>t)\,dt,
\end{equation*}
using $m^{d-1}\le 2^{d-1}(m-1)^{d-1}$. Summing over $m\ge2$ gives
$\sum_{m\ge1} m^{d-1}\mathbb{P}(Y_0>\delta m)<\infty$, hence $\sum_{m\ge1}\mathbb{P}(A_m)<\infty$.

By Borel--Cantelli, almost surely there exists $m_0(\omega)$ such that for all $m\ge m_0(\omega)$, $A_m$ does not occur; i.e.
$Y_\alpha(\omega)\le \delta|\alpha|$ for all $|\alpha|\ge m_0(\omega)$. Exponentiating gives $|\xi_\alpha(\omega)|\le e^{\delta|\alpha|}$ for all $|\alpha|\ge m_0(\omega)$.
Set
\begin{equation*}
C_\delta(\omega):=\max\Big\{1,\ \max_{|\alpha|<m_0(\omega)}|\xi_\alpha(\omega)|e^{-\delta|\alpha|}\Big\}.
\end{equation*}
Then $C_\delta(\omega)$ is a.s.\ finite and \eqref{eq:xi-subexp} holds for all $\alpha\in\Z_+^d$.
\end{proof}

We will use a single probability-one event on which \eqref{eq:xi-subexp} holds simultaneously for all $\delta>0$.

\begin{rem}\label{rem:Omega-xi}
Applying Lemma~\ref{lem:multiindex-subexp} along the countable sequence $\delta=1/j$ for $j\in\mathbb{N}$, we may define the probability-one event $\Omega^\xi:=\bigcap_{j=1}^\infty \Omega(1/j)$. For any $\omega\in\Omega^\xi$ and any $\delta>0$, choosing $j$ such that $1/j\le\delta$ ensures that \eqref{eq:xi-subexp} holds everywhere on $\Omega^\xi$ with the finite constant $C_\delta(\omega):=C_{1/j}(\omega)$.
\end{rem}

\begin{lem}\label{lem:upper-bounds-compacts}
Assume \eqref{eq:logmoment} and \eqref{eq:profile}. Then almost surely the family $u_n(\cdot,\omega):=\frac1n\log|\mathbf{P}_n(\cdot,\omega)|$ is locally uniformly bounded above on $\mathbb{C}^d$; that is, for every compact set $K\Subset\mathbb{C}^d$,
\begin{equation*}
\sup_{n\ge1}\sup_{z\in K} u_n(z,\omega) < \infty \qquad \text{almost surely.}
\end{equation*}
\end{lem}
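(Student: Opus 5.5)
Fix $\omega$ in the probability-one event $\Omega^\xi$ of Remark~\ref{rem:Omega-xi}, intersected with the event of Lemma~\ref{lem:eventual-nonzero}. The plan is a deterministic estimate for this $\omega$, following the pattern of Step~1 in the proof of Theorem~\ref{thm:potentials}. The difference is that the sparse-subsequence control \eqref{eq:good-subseq-used} is replaced by the multi-index bound \eqref{eq:xi-subexp}, which holds for all $n$ at once.

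Fix a compact $K\Subset\C^d$ and $\delta=1$. For $\alpha\in nP\cap\Z_+^d$ we have $|\alpha|\le nA_P$, where $A_P=\max_{t\in P}|t|_1$ as in Remark~\ref{rem:Sigma-kP}. Hence \eqref{eq:xi-subexp} gives
\[
|\xi_\alpha(\omega)|\le C_1(\omega)e^{nA_P}.
\]
By \eqref{eq:profile} there is $N$ such that $|c_{\alpha,n}|\le \exp\big(n(\log f(\alpha/n)+1)\big)$ for $n\ge N$. With $S_K$ and $B_K=I_{P,f}(S_K)$ as in Step~1, we also have $|z^\alpha|\le e^{\langle\alpha,S_K\rangle}$ on $K$. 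Combining these with the triangle inequality and Lemma~\ref{lem:lattice-growth}, we get for $n\ge \max(N,n_P)$ and $z\in K$:
\[
u_n(z,\omega)\le B_K+1+A_P+\frac{\log C_1(\omega)}{n}+\frac1n\log(C_Pn^d),
\]
which is bounded uniformly in $n$. Any fixed $\delta>0$ works equally well, since we only need boundedness and not sharpness.

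Only finitely many indices $n<\max(N,n_P)$ remain. For each of these, either $\mathbf P_n(\cdot,\omega)\not\equiv0$, in which case $u_n$ is upper semicontinuous (indeed psh) and so bounded above on the compact $K$; or $\mathbf P_n\equiv 0$, in which case $u_n=0$ by convention. Taking the maximum of these finitely many bounds and the uniform bound above gives $\sup_{n}\sup_K u_n(\cdot,\omega)<\infty$.

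Finally, the good event $\Omega^\xi$ does not depend on $K$. So one probability-one event works for every compact set simultaneously, and no countable exhaustion argument is needed. There is no real obstacle here. The only point requiring care is to use $|\alpha|\le A_Pn$ to convert the $e^{\delta|\alpha|}$ growth from Lemma~\ref{lem:multiindex-subexp} into $e^{O(n)}$ growth, which keeps the bound on $u_n$ finite after dividing by $n$.
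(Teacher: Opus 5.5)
Your proof is correct and is essentially the paper's argument: on $\Omega^\xi$ you use $|\alpha|\le nA_P$ to turn \eqref{eq:xi-subexp} into $e^{O(n)}$ growth, combine this with the profile bound, $|z^\alpha|\le e^{\langle\alpha,S_K\rangle}$ on $K$ and Lemma~\ref{lem:lattice-growth}, and then absorb the finitely many remaining indices. The only difference is that you fix $\delta=1$ instead of tying $\delta$ to an arbitrary $\varepsilon$ via $\delta M_P\le\varepsilon$; this is immaterial, since only boundedness is needed and not the sharp constant $B_K$.
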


\begin{proof}
Fix a compact set $K\Subset\mathbb{C}^d$ and $\varepsilon>0$. Define
\begin{equation*}
R_{K,i}:=\max\Big\{1,\sup_{z\in K}|z_i|\Big\}\quad (i=1,\dots,d),\qquad
S_K:=(\log R_{K,1},\dots,\log R_{K,d})\in\mathbb{R}_+^d,
\end{equation*}
and set $B_K:=\sup_{t\in P}\big(\langle S_K,t\rangle+\log f(t)\big)=I_{P,f}(S_K)<\infty$. 
Let $M_P:=\sup_{t\in P}|t|_1<\infty$ and choose $\delta>0$ so that $\delta M_P\le\varepsilon$. Work on the probability-one event $\Omega^\xi$ from Remark~\ref{rem:Omega-xi}. Then for each $\omega\in\Omega^\xi$, there exists $C_\delta(\omega)<\infty$ such that $|\xi_\alpha(\omega)|\le C_\delta(\omega)e^{\delta|\alpha|}$ for all $\alpha$. If $\alpha\in nP\cap\mathbb{Z}_+^d$, then $|\alpha|\le nM_P$, hence for all $\alpha\in nP\cap\mathbb{Z}_+^d,$
\begin{equation}\label{eq:xi-K}
|\xi_\alpha(\omega)|\le C_\delta(\omega)e^{\varepsilon n}.
\end{equation}

By the profile condition \eqref{eq:profile}, there exists $n_0=n_0(\varepsilon)$ such that for all $n\ge n_0$ and $\alpha\in nP\cap\mathbb{Z}_+^d$,
\begin{equation*}
\frac1n\log|c_{\alpha,n}|\le \log f(\alpha/n)+\varepsilon.
\end{equation*}
For $z\in K$, we have $|z^\alpha|\le \exp(\langle S_K,\alpha\rangle)$, and since  $\alpha/n\in P$,
\begin{equation*}
|c_{\alpha,n}z^\alpha|
\le \exp\Big(n\big(\log f(\alpha/n)+\varepsilon+\langle S_K,\alpha/n\rangle\big)\Big)
\le \exp\big(n(B_K+\varepsilon)\big).
\end{equation*}
Combining this with \eqref{eq:xi-K} and summing over $\alpha$ yields, for $\omega\in\Omega^\xi$, $n\ge n_0$, and $z\in K$,
\begin{equation*}
|\mathbf{P}_n(z,\omega)|
\le C_\delta(\omega)\,|nP\cap\mathbb{Z}_+^d|\,\exp\big(n(B_K+2\varepsilon)\big).
\end{equation*}
By Lemma~\ref{lem:lattice-growth}, $\frac1n\log|nP\cap\mathbb{Z}_+^d|\to0$, so there exists $n_1=n_1(\varepsilon)$ such that $\frac1n\log|nP\cap\Z_+^d|\le\varepsilon$ for all $n\ge n_1$. Therefore, for $n\ge N:=\max\{n_0,n_1\}$,
\begin{equation*}
\sup_{z\in K} u_n(z,\omega)\le B_K+3\varepsilon+\frac1n\log C_\delta(\omega).
\end{equation*}
Absorbing the finitely many indices $1\le n<N$ (each $\sup_{z\in K}u_n(z,\omega)$ is finite) yields the desired bound on $K$.
\end{proof}

\subsection{Pointwise limits and proof of Theorem~\ref{thm:main}}\label{subsec:ptwise-main-proof}

When $d>2$ the lower-tail bound in \eqref{eq:lower-prob-rev} is summable, so we obtain pointwise almost sure convergence along the full sequence.

\begin{thm}\label{thm:KZ-P-as}
Assume \eqref{eq:profile} and \eqref{eq:logmoment}, and suppose that $d>2$. Then for every $z\in(\C^*)^d$,
\begin{equation}\label{eq:pot-conv-as}
\frac1n\log|\mathbf{P}_n(z)|\longrightarrow \Phi_{P,f}(z)\qquad\text{almost surely as }n\to\infty.
\end{equation}
\end{thm}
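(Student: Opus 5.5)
Fix $z\in(\C^*)^d$, set $s:=\Log(z)$ and $u_n:=\frac1n\log|\mathbf P_n|$. The plan is to prove an almost sure upper bound and an almost sure lower bound separately, each within an arbitrary $\eta>0$. We then intersect over a countable sequence $\eta_k\downarrow0$, exactly as at the end of the proof of Lemma~\ref{lem:pointwise-as-goodsubseq}.

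\emph{Upper bound.} We work on the probability-one event $\Omega^\xi$ of Remark~\ref{rem:Omega-xi}. This is essentially the argument of Lemma~\ref{lem:upper-bounds-compacts} applied to the singleton $K=\{z\}$, but with the sharper quantity $I_{P,f}(s)$ in place of $B_K$. Choose $\delta>0$ with $\delta M_P\le\eta$, where $M_P:=\sup_{t\in P}|t|_1$. For $\omega\in\Omega^\xi$ we then have $|\xi_\alpha(\omega)|\le C_\delta(\omega)e^{\eta n}$ for all $\alpha\in nP\cap\Z_+^d$. The profile condition \eqref{eq:profile} gives $|c_{\alpha,n}z^\alpha|\le\exp(n(h_s(\alpha/n)+\eta))\le \exp(n(I_{P,f}(s)+\eta))$ for $n$ large. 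Summing with the triangle inequality and using Lemma~\ref{lem:lattice-growth}, we get
\[
u_n(z,\omega)\le I_{P,f}(s)+2\eta+\tfrac1n\log C_\delta(\omega)+\tfrac1n\log|nP\cap\Z_+^d|.
\]
Hence $\limsup_n u_n(z,\omega)\le\Phi_{P,f}(z)+2\eta$ for every $\omega\in\Omega^\xi$. This step uses \eqref{eq:logmoment} only through Lemma~\ref{lem:multiindex-subexp}, and it holds along the full sequence in any dimension.

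\emph{Lower bound.} Here we reuse the quantitative small-ball estimate \eqref{eq:lower-prob-rev} from the proof of Theorem~\ref{thm:KZ-P-Theorem41}. That estimate states $\mathbb P\big(u_n(z)\le I_{P,f}(s)-4\eta\big)\le C'n^{-d/2}$ for all large $n$. Its derivation used only the profile condition, non-degeneracy of $\xi_0$, the lattice count \eqref{eq:JnU-size-rev} and the Kolmogorov--Rogozin inequality; it did not use the tail assumption. When $d>2$ we have $d/2>1$, so $\sum_n n^{-d/2}<\infty$. The Borel--Cantelli lemma then gives $\liminf_n u_n(z)\ge\Phi_{P,f}(z)-4\eta$ almost surely, along the full sequence. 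On the exceptional event $\{\mathbf P_n\equiv0\}$ our convention sets $u_n=0$. By Lemma~\ref{lem:eventual-nonzero} this event occurs only finitely often almost surely, so it does not affect the liminf.

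Combining the two bounds along $\eta=\eta_k\downarrow0$ and intersecting the countably many probability-one events yields \eqref{eq:pot-conv-as}. The only genuinely delicate point is the summability in the lower bound. The Kolmogorov--Rogozin rate $n^{-d/2}$ is exactly what forces the restriction $d>2$, and it is why the cases $d\le2$ in Theorem~\ref{thm:main}(i) must instead be handled along sparse subsequences satisfying $\sum_k\ell_k^{-d/2}<\infty$. Before applying Borel--Cantelli, I would check that the constant $C'$ in \eqref{eq:lower-prob-rev} and the threshold beyond which it holds depend only on $z$ and $\eta$, and not on $n$. From the construction of $U$, $c_U$, $n_1$, $n_2$ and $q$, this appears to be the case.
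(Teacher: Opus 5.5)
Your proposal is correct and follows the paper's proof essentially step for step. The upper bound holds on $\Omega^\xi$ via Lemma~\ref{lem:multiindex-subexp} with $\delta M_P\le\eta$, the lower bound comes from the summable small-ball estimate \eqref{eq:lower-prob-rev} and Borel--Cantelli when $d>2$, and you then let $\eta\downarrow0$ along a countable sequence; your observations that \eqref{eq:lower-prob-rev} does not use the tail assumption, and that its constant and threshold depend only on $z$ and $\eta$, are exactly what the paper tacitly relies on.
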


\begin{proof}
Fix $z\in(\C^*)^d$ and $\varepsilon>0$ and work on the event $\Omega^\xi$ from Remark~\ref{rem:Omega-xi}.
Set $M_P:=\sup_{t\in P}|t|_1$ and choose $\delta:=\varepsilon/(2M_P)$. For $\omega\in\Omega^\xi$ there exists $C_\delta(\omega)$ such that
$|\xi_\alpha(\omega)|\le C_\delta(\omega)e^{\delta|\alpha|}$ for all $\alpha$. Hence for every $n\ge1$,
\begin{equation*}
\max_{\alpha\in nP\cap\Z_+^d}\log\big(1+|\xi_\alpha(\omega)|\big)
\le \log(2C_\delta(\omega))+\delta\,\max_{\alpha\in nP\cap\Z_+^d}|\alpha|
\le \log(2C_\delta(\omega))+\delta n M_P.
\end{equation*}
Since $\delta M_P=\varepsilon/2$, there exists $n_0=n_0(\omega,\varepsilon)$ such that for all $n\ge n_0$,
\begin{equation*}
\max_{\alpha\in nP\cap\Z_+^d}\log\big(1+|\xi_\alpha(\omega)|\big)\le \varepsilon n.
\end{equation*}
Repeating the deterministic estimate from the upper bound in the proof of Theorem~\ref{thm:KZ-P-Theorem41}, we obtain
\begin{equation}\label{eq:upper-limsup-rev}
\limsup_{n\to\infty}\frac1n\log|\mathbf{P}_n(z)|\le \Phi_{P,f}(z)+3\varepsilon
\qquad\text{on }\Omega^\xi.
\end{equation}

On the other hand, \eqref{eq:lower-prob-rev} yields
\begin{equation*}
\mathbb P\!\left(\frac1n\log|\mathbf{P}_n(z)|\le \Phi_{P,f}(z)-4\varepsilon\right)\le C'\,n^{-d/2}.
\end{equation*}
Since $d>2$, $\sum_{n\ge1}n^{-d/2}<\infty$, so the Borel--Cantelli lemma implies
\begin{equation*}
\liminf_{n\to\infty}\frac1n\log|\mathbf{P}_n(z)|\ge \Phi_{P,f}(z)-4\varepsilon
\qquad\text{almost surely}.
\end{equation*}
Intersecting with $\Omega^\xi$ and combining with \eqref{eq:upper-limsup-rev} gives
\begin{equation*}
\Phi_{P,f}(z)-4\varepsilon\le \liminf_{n\to\infty}\frac1n\log|\mathbf{P}_n(z)|
\le \limsup_{n\to\infty}\frac1n\log|\mathbf{P}_n(z)|\le \Phi_{P,f}(z)+3\varepsilon
\quad\text{a.s.}
\end{equation*}
Letting $\varepsilon\downarrow0$ along a countable sequence proves \eqref{eq:pot-conv-as}.
\end{proof}

\begin{rem}\label{rem:subseq-as}
When $d\le 2$, the lower-tail bound \eqref{eq:lower-prob-rev} is of order $O(n^{-d/2})$, which is not summable. Thus, the Borel--Cantelli lemma does not yield almost sure convergence along the full sequence.

This limitation is standard when applying the Kolmogorov--Rogozin inequality to polynomials with $O(n^d)$ terms. As demonstrated by Bloom and Dauvergne \cite{BD19}, one can overcome this barrier for $d \le 2$ by using stronger small-ball estimates derived from Nguyen and Vu \cite{NV11}. However, applying these  theorems requires the deterministic coefficients to satisfy certain metric covering conditions. 

While such conditions can be verified for specific models (such as the Kac ensemble), establishing them for an arbitrary polytope $P$ and an arbitrary profile function $f$ is highly non-trivial. To keep our framework valid for general $P$ and $f$ without imposing restrictive structural assumptions, we do not pursue this direction. Instead, part~\textup{(i)} of Theorem~\ref{thm:main} simply restricts to a sparse subsequence $(\ell_k)_{k\ge1}$ with $\sum_{k=1}^\infty \ell_k^{-d/2}<\infty$ to obtain almost sure convergence in $L^1_{\mathrm{loc}}(\mathbb{C}^d)$.
\end{rem}

\begin{proof}[Proof of Theorem~\ref{thm:main}]
We maintain the convention from the Introduction on $\{\mathbf P_n\equiv0\}$ (cf.\ Lemma~\ref{lem:eventual-nonzero}).
In particular, for almost every $\omega$ there exists $n_0(\omega)$ such that $\mathbf{P}_n(\cdot,\omega)\not\equiv0$ for all $n\ge n_0(\omega)$,
and hence $u_n(\cdot,\omega)=\frac1n\log|\mathbf{P}_n(\cdot,\omega)|\in\PSH(\C^d)$ for all sufficiently large $n$.

\smallskip\noindent\emph{(i) Convergence along deterministic sparse subsequences.}
Let $(\ell_k)_{k\ge1}$ be increasing with $\sum_{k=1}^\infty \ell_k^{-d/2}<\infty$.
Fix $z\in(\C^*)^d$ and $\eta>0$. Arguing as in the previous lemma for the lower bound, the Borel--Cantelli lemma yields
\begin{equation}\label{eq:liminf-sparse}
\liminf_{k\to\infty}\frac1{\ell_k}\log|\mathbf{P}_{\ell_k}(z)|\ge \Phi_{P,f}(z)-4\eta
\qquad\text{almost surely}.
\end{equation}
On the other hand, restricting to the probability-one event $\Omega^\xi$, the upper bound argument (as in \eqref{eq:upper-limsup-rev}) gives that
\begin{equation*}
\limsup_{k\to\infty}\frac1{\ell_k}\log|\mathbf{P}_{\ell_k}(z,\omega)|\le \Phi_{P,f}(z)
\qquad\text{for every }\omega\in\Omega^\xi.
\end{equation*}
Letting $\eta\downarrow0$ along a countable sequence yields
\begin{equation*}
u_{\ell_k}(z,\omega)=\frac1{\ell_k}\log|\mathbf{P}_{\ell_k}(z,\omega)|\longrightarrow \Phi_{P,f}(z)
\qquad\text{almost surely as }k\to\infty.
\end{equation*}

Since $(z,\omega)\mapsto u_{\ell_k}(z,\omega)$ is jointly measurable on $\C^d\times\Omega$, applying Lemma~\ref{lem:fubini-swap1} provides an event $\Omega_0\subset\Omega$ with $\mathbb{P}(\Omega_0)=1$ such that for every $\omega\in\Omega_0$,
\begin{equation*}
u_{\ell_k}(z,\omega)\to \Phi_{P,f}(z)\qquad\text{for Lebesgue-a.e.\ }z\in\C^d.
\end{equation*}

Fix $\omega\in\Omega^\xi\cap\Omega_0$ such that $\mathbf{P}_n(\cdot,\omega)\not\equiv0$ for all sufficiently large $n$.
Then $u_{\ell_k}(\cdot,\omega)\in\PSH(\C^d)$ for all sufficiently large $k$. By Lemma~\ref{lem:upper-bounds-compacts}, the family $(u_{\ell_k}(\cdot,\omega))_{k\ge1}$ is locally uniformly bounded above on $\C^d$. Furthermore, recall that $\Phi_{P,f}\not\equiv-\infty$ (as it is finite on $(\C^*)^d$). Since $u_{\ell_k}(\cdot,\omega)\to\Phi_{P,f}$ almost everywhere, Corollary~\ref{cor:psh-ae-to-L1} yields
\begin{equation*}
u_{\ell_k}(\cdot,\omega)\longrightarrow\Phi_{P,f}\qquad\text{in }L^1_{\mathrm{loc}}(\C^d).
\end{equation*}
By the continuity of $dd^c$, this immediately gives $\frac{1}{\ell_k}[Z_{\mathbf{P}_{\ell_k}}]\longrightarrow dd^c\Phi_{P,f}$ almost surely in the weak sense of currents, proving \textup{(i)}.

\smallskip\noindent\emph{(ii) Convergence along the full sequence when $d>2$.}
Assume now that $d>2$. By Theorem~\ref{thm:KZ-P-as}, for every $z\in(\C^*)^d$, we have $u_n(z,\omega)\to \Phi_{P,f}$ almost surely.
Because this provides the exact same almost sure pointwise convergence as in part \textup{(i)} but for the full sequence $(u_n)$, the remainder of the proof is identical. Arguing exactly as above via Lemma~\ref{lem:fubini-swap1}, Lemma~\ref{lem:upper-bounds-compacts}, and Corollary~\ref{cor:psh-ae-to-L1}, we conclude that $u_n(\cdot,\omega)\longrightarrow\Phi_{P,f}$ in  $L^1_{\mathrm{loc}}(\C^d)$
almost surely. The continuity of $dd^c$ then immediately yields $\frac1n[Z_{\mathbf{P}_n}]\longrightarrow dd^c\Phi_{P,f}$ almost surely in the weak sense of currents, proving \textup{(ii)}.
\end{proof}

\section{Torus-invariant random orthogonal polynomials}\label{sec:torus-orthopoly}

In this section we show that random orthogonal polynomials associated with certain torus-invariant weight functions are a special case of our random $P$-polynomials, with $P$ given by the standard simplex. Under the present hypotheses, this recovers the conclusion of a theorem of Bayraktar \cite{Bay19}.

\subsection{The random orthogonal polynomial ensemble}

Let $d\ge 1$ and let $Q:\C^d\to[0,\infty)$ be a $\mathscr{C}^2$ weight invariant under the real torus action, i.e., $Q(z_1,\dots,z_d)=Q(|z_1|,\dots,|z_d|)$ for $z\in\C^d$.
Assume that there exist $\varepsilon>0$ and $R>0$ such that
\begin{equation}\label{eq:Q-growth-orthopoly}
Q(z)\ge (1+\varepsilon)\log\|z\|\qquad\text{for }\|z\|\ge R,
\end{equation}
where $\|\cdot\|$ is the Euclidean norm. By enlarging $R$ if necessary, we may assume $R\ge 1$ throughout. 

For $n\in\mathbb{N}$, we consider the weighted inner product
\begin{equation*}
\langle p,q\rangle_n:=\int_{\C^d} p(z)\,\overline{q(z)}\,e^{-2nQ(z)}\,dV_d(z),
\end{equation*}
and its induced norm $\|\cdot\|_n$, where $dV_d$ denotes Lebesgue measure on $\C^d$. Let us denote by  $\mathcal P_n:=\mathrm{span}\{z^\alpha:\alpha\in\Z_+^d,\ |\alpha|\le n\}$ the space of polynomials  of total degree at most $n$. The growth condition \eqref{eq:Q-growth-orthopoly} guarantees that the squared norms
$\|z^\alpha\|^{2}_n=\int_{\C^d}|z^\alpha|^2 e^{-2nQ}\,dV_d$ are finite for all $|\alpha|\le n$ once $n$ is large enough:
indeed, for $\|z\|\ge R$ and $|\alpha|\le n$,
\begin{equation*}
|z^\alpha|^2 e^{-2nQ(z)}
\le \|z\|^{2|\alpha|}\,\|z\|^{-2n(1+\varepsilon)}
\le \|z\|^{-2n\varepsilon},
\end{equation*}
so the tail is integrable whenever $2n\varepsilon>2d$. Thus, fixing $n_0:=\lfloor d/\varepsilon\rfloor+1$, the normalization constants below are well-defined for all $n\ge n_0$.

A fundamental feature of the torus invariance of $Q$ is that distinct monomials are mutually orthogonal in $L^2(\C^d, e^{-2nQ})$. Therefore, an orthonormal basis of $\mathcal P_n$ is obtained simply by normalizing the monomials, $e_{\alpha,n}(z):=c_{\alpha,n}\,z^\alpha$ for $|\alpha|\le n$ and $n\ge n_0$, where
\begin{equation}\label{eq:c-alpha-n-orthopoly}
c_{\alpha,n}:=\Big(\int_{\C^d}|z^\alpha|^2 e^{-2nQ(z)}\,dV_d(z)\Big)^{-1/2}.
\end{equation}
Consequently, the associated random orthogonal polynomial ensemble takes the form
\begin{equation}\label{eq:random-orthopoly-ensemble}
G_n(z)=\sum_{|\alpha|\le n}\xi_\alpha\,c_{\alpha,n}\,z^\alpha,
\end{equation}
with i.i.d.\ complex coefficients $(\xi_\alpha)$.
To relate \eqref{eq:random-orthopoly-ensemble} to our general model, it remains to identify the asymptotic exponential profile of the array $(c_{\alpha,n})$. The relevant probabilistic assumptions on $(\xi_\alpha)$ (depending on whether one applies the convergence-in-probability or almost sure result) will be imposed when invoking the corresponding theorem.

\subsection{Asymptotic exponential profile and the weighted extremal function}\label{subsec:orthonormal-profile}

We keep the notation and standing assumptions from the previous subsection. In particular, the orthonormal basis is given by normalized monomials $e_{\alpha,n}(z)=c_{\alpha,n}z^\alpha$ and the random ensemble has the form \eqref{eq:random-orthopoly-ensemble}. We now identify the asymptotic exponential profile of the array $(c_{\alpha,n})$.

For $n\ge n_0$ and $\alpha\in\Z_+^d$ with $|\alpha|\le n$, define 
\begin{equation*}
I_{\alpha,n}:=\int_{\C^d}|z^\alpha|^2 e^{-2nQ(z)}\,dV_d \qquad \text{and} \qquad c_{\alpha,n}:=I_{\alpha,n}^{-1/2}.
\end{equation*}

To evaluate this integral, we pass to logarithmic coordinates. Write $z_j=r_je^{i\theta_j}$ with $r_j>0$ and $\theta_j\in[0,2\pi)$. Then $dV_d(z)=\prod_{j=1}^d r_j\,dr_j\,d\theta_j$. Since $Q$ is torus-invariant, $Q(z)=Q(r)$. Therefore,
\begin{equation*}
I_{\alpha,n}
=\int_{[0,2\pi)^d}\int_{(0,\infty)^d}
\Big(\prod_{j=1}^d r_j^{2\alpha_j}\Big)e^{-2nQ(r)}\prod_{j=1}^d r_j\,dr_j\,d\theta
=(2\pi)^d\int_{(0,\infty)^d} r^{2\alpha}\,e^{-2nQ(r)}\prod_{j=1}^d r_j\,dr_j,
\end{equation*}
where $r^{2\alpha}:=\prod_{j=1}^d r_j^{2\alpha_j}$ and $d\theta:=d\theta_1\cdots d\theta_d$.
Set $s_j=\log r_j$ so that $r_j=e^{s_j}$ and $r_jdr_j=e^{2s_j}ds_j$. Define
\begin{equation}\label{eq:qdef-final}
q(s):=Q(e^{s_1},\dots,e^{s_d}),\qquad s\in\R^d,
\end{equation}
and for $t\in\Sigma$ set
\begin{equation}\label{eq:F-def-final}
F_t(s):=\langle t,s\rangle-q(s).
\end{equation}
Then for all $n\ge n_0$ and all $\alpha\in\Z_+^d$ with $|\alpha|\le n$, setting $t:=\alpha/n$, we have
\begin{equation}\label{eq:Laplace-form-final}
I_{\alpha,n}=(2\pi)^d\int_{\R^d}\exp\!\big(2nF_t(s)\big)\,e^{2\langle\mathbf 1,s\rangle}\,ds,
\end{equation}
where $\mathbf 1=(1,\dots,1)$.

To control the integrand, we introduce the maximum coordinate function $m(s):=\max_{1\le j\le d}s_j$ and the threshold $M_0:=\log R$.

\begin{lem}\label{lem:qgrowth-final}
If $m(s)\ge M_0$, then $q(s)\ge (1+\varepsilon)\,m(s).$ Consequently, for every $t\in\Sigma$,
\begin{equation}\label{eq:F-growth-final}
F_t(s)\le -\varepsilon\,m(s)\qquad\text{whenever }m(s)\ge M_0,
\end{equation}
uniformly in $t\in\Sigma$.
\end{lem}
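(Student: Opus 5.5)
The plan is to relate the point $r=(e^{s_1},\dots,e^{s_d})$ to the region where the growth hypothesis \eqref{eq:Q-growth-orthopoly} applies, and then read off the bound for $F_t$ directly from the definitions \eqref{eq:qdef-final} and \eqref{eq:F-def-final}.

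\textbf{Step 1: reduce to the growth hypothesis.} Fix $s\in\R^d$ with $m(s)\ge M_0=\log R$, and set $r=(e^{s_1},\dots,e^{s_d})$. Pick an index $j_0$ with $s_{j_0}=m(s)$. Then
\[
\|r\|\ge r_{j_0}=e^{m(s)}\ge e^{M_0}=R.
\]
Hence \eqref{eq:Q-growth-orthopoly} applies at the point $r$, with $r$ viewed as a point of $\C^d$ whose coordinates are positive reals.

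\textbf{Step 2: the bound on $q$.} Since $Q$ is torus-invariant, $q(s)=Q(r)$. Step 1 then gives
\[
q(s)\ge (1+\varepsilon)\log\|r\|\ge (1+\varepsilon)\log r_{j_0}=(1+\varepsilon)\,m(s).
\]
The second inequality uses $\log\|r\|\ge \log r_{j_0}$ and $1+\varepsilon>0$. This is the first claim of the lemma.

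\textbf{Step 3: the bound on $F_t$.} For $t\in\Sigma$ we have $t_j\ge 0$ and $\sum_j t_j\le 1$. Therefore
\[
\langle t,s\rangle\le \Big(\sum_j t_j\Big)\max_j s_j \le m(s).
\]
The last step uses $m(s)\ge M_0\ge 0$, which holds because $R\ge 1$; this is why the reduction $R\ge1$ was made. If $m(s)$ could be negative, the inequality $\bigl(\sum_j t_j\bigr)m(s)\le m(s)$ would fail for $t$ with $\sum_j t_j<1$. Combining with Step 2,
\[
F_t(s)=\langle t,s\rangle-q(s)\le m(s)-(1+\varepsilon)m(s)=-\varepsilon\,m(s).
\]
No constant in this chain depends on $t$, so the bound is uniform in $t\in\Sigma$.

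The argument is elementary throughout. The only delicate point is the sign issue in Step 3, and it is handled by the standing assumption $R\ge 1$.
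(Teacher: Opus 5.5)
Your proof is correct and follows the paper's argument essentially line for line: you pick a maximizing coordinate to get $\|e^s\|\ge e^{m(s)}\ge R$ and apply the growth hypothesis \eqref{eq:Q-growth-orthopoly}. You then use $t_i\ge 0$, $|t|_1\le 1$ and $m(s)\ge M_0=\log R\ge 0$ to get $\langle t,s\rangle\le m(s)$; note that $q(s)=Q(r)$ is just the definition \eqref{eq:qdef-final}, so torus-invariance is not actually needed at that step.
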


\begin{proof}
Choose $j\in\{1,\dots,d\}$ such that $s_j=m(s)$. Then $\|e^s\|\ge e^{s_j}=e^{m(s)}$, hence
$\log\|e^s\|\ge m(s)$. Since $m(s)\ge M_0=\log R$, we have $\|e^s\|\ge R$ and \eqref{eq:Q-growth-orthopoly} yields 
\begin{equation}\label{eq:qgrowth-final}
    q(s)=Q(e^s)\ge (1+\varepsilon)\log\|e^s\|\ge (1+\varepsilon)m(s).
\end{equation}

Now fix $t\in\Sigma$. Since $t_i\ge 0$ and $s_i\le m(s)$ for all $i$, we have 
\begin{equation*}
\langle t,s\rangle=\sum_{i=1}^d t_i s_i \le \sum_{i=1}^d t_i\,m(s)=|t|_1\,m(s).
\end{equation*}
As $|t|_1\le 1$ and $m(s)\ge M_0=\log R\ge 0$, we obtain $\langle t,s\rangle\le m(s)$.
Combining this with \eqref{eq:qgrowth-final} gives 
\begin{equation*}
F_t(s)=\langle t,s\rangle-q(s)\le m(s)-(1+\varepsilon)m(s)=-\varepsilon m(s),
\end{equation*}
which proves \eqref{eq:F-growth-final}. The estimate is clearly uniform in $t\in\Sigma$.
\end{proof}

Next, we introduce the candidate profile function and prove its basic regularity. Define $u:\Sigma\to\R\cup\{+\infty\}$ by
\begin{equation}\label{eq:u-def-final}
u(t):=\sup_{s\in\R^d}F_t(s)=\sup_{s\in\R^d}\big(\langle t,s\rangle-q(s)\big),\qquad t\in\Sigma.
\end{equation}

\begin{cor}\label{cor:u-finite-continuous-final}
The profile function $u(t)$ is finite everywhere on the simplex, meaning $u(t)\in\R$ for all $t\in\Sigma$, and $u\in \mathscr{C}(\Sigma)$.
\end{cor}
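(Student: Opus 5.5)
To show that $u$ is finite and continuous on $\Sigma$, I would first get a uniform two-sided bound and then use convexity together with a compactness (localization) argument for the supremum.

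\emph{Finiteness.} For the lower bound, evaluate at $s=0$: $u(t)\ge F_t(0)=-q(0)=-Q(1,\dots,1)>-\infty$ for every $t$. For the upper bound, split $\R^d$ into $\{m(s)\ge M_0\}$ and $\{m(s)<M_0\}$.
\begin{itemize}
\item On $\{m(s)\ge M_0\}$, Lemma~\ref{lem:qgrowth-final} gives $F_t(s)\le-\varepsilon m(s)\le 0$.
\item On $\{m(s)<M_0\}$, use $q\ge0$ (since $Q\ge0$) and $t_i\ge0$, so $\langle t,s\rangle\le |t|_1\max(m(s),0)\le M_0$. Hence $F_t(s)\le M_0$.
\end{itemize}
Therefore $-q(0)\le u(t)\le M_0$ for all $t\in\Sigma$, uniformly.

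\emph{Continuity.} The map $t\mapsto u(t)$ is a supremum of affine functions, so it is convex and lower semicontinuous on $\Sigma$. Convexity alone would only give continuity on the relative interior, and convex functions can jump upward at boundary points. So I would instead show that the supremum can be taken over a compact set of $s$, independent of $t$.

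The difficulty is that directions where some coordinates $s_j\to-\infty$ are not controlled by Lemma~\ref{lem:qgrowth-final}. The plan is therefore to compactify only the upper part and to handle the lower part by monotonicity in $s$ rather than by compactness.

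\emph{Step 1: cut off the upper part.} On $\{m(s)\ge M_0\}$ we have $F_t(s)\le-\varepsilon m(s)$. This is below $-q(0)-1\le u(t)-1$ as soon as $m(s)\ge A:=\max\bigl(M_0,(q(0)+1)/\varepsilon\bigr)$. Hence
\[
u(t)=\sup_{m(s)\le A}F_t(s).
\]

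\emph{Step 2: push negative coordinates back.} I would first try to show that $u(t)=\sup_{s\in[-L,A]^d}F_t(s)$ for some $L$ independent of $t$. This could fail near faces $t_j=0$ if $q$ decreases as $s_j\to-\infty$. If that happens, I would write instead
\[
u(t)=\lim_{L\to\infty}\sup_{s\in[-L,A]^d}F_t(s).
\]
Each term $\sup_{s\in[-L,A]^d}F_t(s)$ is continuous in $t$, because $F_t(s)$ is jointly continuous and $[-L,A]^d$ is compact. The terms increase in $L$, so $u$ is a monotone limit of continuous functions and hence lower semicontinuous, which we already know. For upper semicontinuity, the key extra input is this: the limit $s_j\to-\infty$ corresponds to $z_j\to0$, and $Q$ is $\mathscr{C}^2$, hence continuous at $z_j=0$. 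Thus $q(s)\to Q(e^{s'},0)$ as $s_j\to-\infty$, uniformly on compact sets of the remaining coordinates $s'$.

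From this I would deduce the following. If $t_j>0$, letting $s_j\to-\infty$ only decreases $F_t$, since $\langle t,s\rangle\to-\infty$ while $q$ stays bounded. If $t_j=0$, the supremum over $s_j\in(-\infty,-L]$ equals, up to $o(1)$, the value with $z_j=0$. So for $L$ large, uniformly in $t$, the supremum over $s_j\le -L$ exceeds the supremum over $s_j\in[-L,-L+1]$ by at most $o_L(1)$; to prove this, compare $F_t(s_j)$ with $F_t(s_j+\ell)$ for a shift $\ell$ bringing $s_j$ back into $[-L,-L+1]$. Here $t_j s_j$ only increases under the shift, and $|q(s_j)-q(s_j+\ell)|$ is small by uniform continuity of $Q$ on the compact set $\{|z_j|\le e^{-L+1},\ |z_i|\le e^{A}\}$. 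This gives uniform convergence $\sup_{[-L,A]^d}F_t\to u(t)$ in $t\in\Sigma$, so $u$ is a uniform limit of continuous functions and therefore $u\in\mathscr{C}(\Sigma)$.

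The main obstacle is Step 2, the uniform control as coordinates tend to $-\infty$. This is where the continuity of $Q$ on the coordinate hyperplanes, and not only its growth at infinity, is essential.
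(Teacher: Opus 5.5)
Your proof is correct. The finiteness part is the same as the paper's: the same split at $m(s)=M_0$ and the same lower bound $u\ge -q(0)$.

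For continuity you take a genuinely different route. The paper notes that $u$ is convex and lower semicontinuous, being a supremum of affine functions. It then invokes the Gale--Klee--Rockafellar theorem: a finite convex function on a simplex (more generally, a polytope) is automatically upper semicontinuous. You instead show directly that $u$ is the uniform limit on $\Sigma$ of the continuous functions $u_L(t):=\sup_{[-L,A]^d}F_t$.
\begin{itemize}
\item The upper cut-off at $A$ comes from the growth condition together with $u\ge -q(0)$.
\item The lower cut-off comes from replacing every coordinate $s_j<-L$ by $-L$. This does not decrease $\langle t,s\rangle$, because $t\in\R_+^d$.
\item The same replacement changes $q$ by at most $\omega_Q(\sqrt d\,e^{-L})$, where $\omega_Q$ is the modulus of continuity of the $\mathscr{C}^2$ function $Q$ on the compact set $\{|z_i|\le e^{A},\ i=1,\dots,d\}$.
\end{itemize}
When you write this up, perform the shift on all offending coordinates simultaneously rather than one coordinate at a time. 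Also drop the hedging about whether a fixed $L$ suffices: the uniform-convergence argument covers every case, and the distinction between $t_j>0$ and $t_j=0$ is unnecessary.

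The paper's route is shorter, but it rests on an external theorem that genuinely needs the polyhedral structure of $\Sigma$. That is why Remark~\ref{rem:convex-continuity-on-P} restricts attention to polytopes. Your route uses only $t\ge 0$ and the continuity of $Q$ up to the coordinate hyperplanes. It therefore works verbatim for any compact $P\subset\R_+^d$ under the growth hypothesis of Remark~\ref{rem:general-P-final}, which shows that the polytope restriction is not needed for this particular $u$. It also directly gives uniform approximation of $u$ by suprema over cubes. Since $[-L,A]^d\subset K_M$ for $M\ge\max(L,A)$, you get $u_L\le u_M\le u$, so Lemma~\ref{lem:Dini-uM-final} follows immediately without Dini's theorem.
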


\begin{proof}
Fix $t\in\Sigma$. We show that $F_t$ is bounded above on $\R^d$.
If $m(s)\ge M_0$, then Lemma~\ref{lem:qgrowth-final} yields $F_t(s)\le -\varepsilon m(s)\le -\varepsilon M_0$.
If $m(s)\le M_0$, then $s_i\le M_0$ for all $i$, hence 
\begin{equation*}
\langle t,s\rangle=\sum_{i=1}^d t_i s_i \le \sum_{i=1}^d t_i\,M_0=|t|_1 M_0\le M_0,
\end{equation*}
and since $q(s)\ge 0$ we obtain $F_t(s)\le M_0$. Therefore $u(t)=\sup_{\R^d}F_t<\infty$.
Also $u(t)\ge F_t(0)=-q(0)>-\infty$, hence $u(t)\in\R$.

Moreover, $u$ is the supremum of affine functions of $t$, hence convex and lower semicontinuous on $\Sigma$.
By the Gale--Klee--Rockafellar theorem \cite[condition (L), p.~868 and Thm.~1, p.~869]{GKR1968},
a \emph{finite} convex function on a simplex is upper semicontinuous. Therefore $u$ is continuous on $\Sigma$.
\end{proof}

\begin{rem}\label{rem:convex-continuity-on-P}
The Gale--Klee--Rockafellar theorem extends from simplices to general polytopes. In contrast, for a general convex body $P\subset\R^d$, a finite, convex, and lower semicontinuous function $f:P\to\R$ is guaranteed to be continuous only on its relative interior $\mathrm{ri}(P)$ (see \cite[Thm.~10.1]{Rockafellar70}); discontinuities can occur on the boundary $\partial P$. This justifies our reliance on the polytope structure of the standard simplex $\Sigma$.
\end{rem}

Next, we prove a uniform Laplace principle on a fixed cube. Fix $M>0$ and set $K_M:=[-M,M]^d$. For $t\in\Sigma$, define the localized supremum
\begin{equation}\label{eq:uM-def}
u_M(t):=\sup_{s\in K_M}F_t(s).
\end{equation}

\begin{lem}\label{lem:laplace-on-cube-final}
With $u_M(t)$ defined as above, we have
\begin{equation}\label{eq:laplace-cube-final}
\lim_{n\to\infty}\ \sup_{t\in\Sigma}\left|
\frac1{2n}\log\!\int_{K_M} e^{2nF_t(s)}e^{2\langle\mathbf 1,s\rangle}\,ds-u_M(t)\right|=0.
\end{equation}
\end{lem}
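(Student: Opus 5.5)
This is a uniform Laplace principle on a compact set, so I would prove matching upper and lower bounds that are uniform in $t\in\Sigma$. The key observation is that the family $\{F_t\}_{t\in\Sigma}$ is uniformly equicontinuous on $K_M$. Since $Q\in\mathscr C^2$, the function $q$ is continuous on $\R^d$, hence uniformly continuous on $K_M$. Also, $|\langle t,s\rangle-\langle t,s'\rangle|\le \|t\|\,\|s-s'\|\le \|s-s'\|$ for $t\in\Sigma$. So $F_t$ has a modulus of continuity $\omega(\cdot)$ on $K_M$ that does not depend on $t$. In addition, the weight $e^{2\langle\mathbf 1,s\rangle}$ lies between $e^{-2dM}$ and $e^{2dM}$ on $K_M$, and these are constants independent of $n$ and $t$.

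\emph{Upper bound.} For every $t$ we have $F_t\le u_M(t)$ on $K_M$, so
\[
\int_{K_M}e^{2nF_t}e^{2\langle\mathbf 1,s\rangle}ds\le (2M)^d e^{2dM}e^{2nu_M(t)}.
\]
Taking $\frac1{2n}\log$ gives an error of $\frac{1}{2n}\log\big((2M)^de^{2dM}\big)$, which tends to $0$ and does not depend on $t$.

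\emph{Lower bound.} Fix $\eta>0$ and choose $\rho\in(0,M)$ with $\omega(\rho)\le\eta$. Since $K_M$ is compact and $F_t$ is continuous, for each $t$ the supremum $u_M(t)$ is attained at some $s_t\in K_M$. On $B(s_t,\rho)\cap K_M$ we then have $F_t\ge u_M(t)-\eta$. The main point is that this set has Lebesgue measure bounded below by a constant $c_\rho>0$ that is independent of $s_t$. This holds because $K_M$ is a cube: for any $s\in K_M$, the set $B(s,\rho)\cap K_M$ contains a cube of side $\rho/\sqrt d$ with a vertex at $s$, pointing toward the interior. Hence
\[
\int_{K_M}e^{2nF_t}e^{2\langle\mathbf 1,s\rangle}ds\ge c_\rho e^{-2dM}e^{2n(u_M(t)-\eta)},
\]
so $\frac1{2n}\log(\cdots)\ge u_M(t)-\eta-O(1/n)$ uniformly in $t$. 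Letting $n\to\infty$ and then $\eta\downarrow0$ gives \eqref{eq:laplace-cube-final}.

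The only delicate step is the uniformity in the lower bound. It requires equicontinuity in $t$, which comes from $\Sigma$ being bounded, and a uniform lower bound on the measure of the neighbourhood of the maximizer even when that maximizer lies on $\partial K_M$, which comes from the geometry of the cube. Both are elementary, so I do not expect a real obstacle. Note that $u_M(t)$ is finite automatically by compactness; Lemma~\ref{lem:qgrowth-final} is not needed here and will only be used afterwards to compare $u_M$ with $u$ and to control the integral outside $K_M$.
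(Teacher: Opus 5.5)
Your proof is correct and follows essentially the same route as the paper. The upper bound comes from $F_t\le u_M(t)$ together with the bounds on the weight. The lower bound comes from a $t$-uniform neighbourhood of a maximizer $s_t$ whose measure is bounded below independently of $s_t$.

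The differences are minor. The paper gets equicontinuity from the Lipschitz bound $L_M=\sup_{K_M}\|\nabla q\|$, whereas you use uniform continuity of $q$, which needs only continuity of $Q$. The paper also obtains $\inf_{y\in K_M}\mathrm{Vol}(B(y,\rho)\cap K_M)>0$ by continuity and compactness, whereas your inward-pointing cube of side $\rho/\sqrt d$ gives the explicit constant $(\rho/\sqrt d)^d$.
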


\begin{proof}
Fix $t\in\Sigma$. Since $F_t$ is continuous and $K_M$ is compact, the supremum
$u_M(t)=\sup_{K_M}F_t$ is attained at some point $s_t\in K_M$.

Set $L_M:=\sup_{s\in K_M}\|\nabla q(s)\|<\infty$. For $s,y\in K_M$, by Cauchy--Schwarz and
$\|t\|_2\le |t|_1\le 1$ we have
\begin{equation*}
|F_t(s)-F_t(y)| \le |\langle t,s-y\rangle|+|q(s)-q(y)| \le \|s-y\|+L_M\|s-y\| =(L_M+1)\|s-y\|.
\end{equation*}
Fix $\delta>0$ and set $\rho:=\delta/(L_M+1)$. Then for every $s\in B(s_t,\rho)\cap K_M$, we have the lower bound $F_t(s)\ge F_t(s_t)-\delta=u_M(t)-\delta$.
Moreover, for $s\in K_M$ we have $\langle \mathbf 1,s\rangle\in[-dM,dM]$, hence $e^{2\langle\mathbf 1,s\rangle}\in[e^{-2dM},e^{2dM}]$ on $K_M$.

Since the map $y\mapsto \mathrm{Vol}(B(y,\rho)\cap K_M)$ is strictly positive and continuous on the compact set $K_M$, its infimum $c_{M,\rho}:=\inf_{y\in K_M}\mathrm{Vol}\big(B(y,\rho)\cap K_M\big)$ is strictly positive. Consequently, we obtain the lower bound:
\begin{equation*}
\int_{K_M} e^{2nF_t(s)}e^{2\langle\mathbf 1,s\rangle}\,ds \ge \int_{B(s_t,\rho)\cap K_M} e^{2nF_t(s)}e^{2\langle\mathbf 1,s\rangle}\,ds \ge e^{-2dM}\,c_{M,\rho}\,e^{2n(u_M(t)-\delta)}.
\end{equation*}
On the other hand, using $F_t\le u_M(t)$ on $K_M$, we have the following upper bound:
\begin{equation*}
\int_{K_M} e^{2nF_t(s)}e^{2\langle\mathbf 1,s\rangle}\,ds \le e^{2dM}\,\mathrm{Vol}(K_M)\,e^{2n u_M(t)}.
\end{equation*}
Taking $\frac1{2n}\log$ of these bounds gives
\begin{align*}
u_M(t)-\delta+\frac1{2n}\log\!\big(e^{-2dM}c_{M,\rho}\big)
&\le \frac1{2n}\log\!\int_{K_M} e^{2nF_t(s)}e^{2\langle\mathbf 1,s\rangle}\,ds \\
&\le u_M(t)+\frac1{2n}\log\!\big(e^{2dM}\mathrm{Vol}(K_M)\big).
\end{align*}
Since the logarithmic terms are independent of $t$, they vanish uniformly as $n\to\infty$. Letting $n\to\infty$ and then $\delta\downarrow 0$ yields \eqref{eq:laplace-cube-final}.
\end{proof}

\begin{lem}\label{lem:Dini-uM-final}
As $M\to\infty$, $u_M(t)\uparrow u(t)$ for each $t\in\Sigma$. Moreover, the convergence is uniform
\begin{equation}\label{eq:Dini-uM-final}
\lim_{M\to\infty}\ \sup_{t\in\Sigma}|u_M(t)-u(t)|=0.
\end{equation}
\end{lem}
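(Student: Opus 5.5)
The plan has two parts: pointwise monotone convergence, then an upgrade to uniform convergence by Dini's theorem on the compact simplex $\Sigma$.

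\emph{Monotonicity and pointwise limit.} Since $K_M\subset K_{M'}$ for $M\le M'$, the map $M\mapsto u_M(t)$ is nondecreasing, and $u_M(t)\le u(t)$ for every $M$. Every $s\in\R^d$ lies in $K_M$ once $M\ge\|s\|_\infty$, so $F_t(s)\le\sup_M u_M(t)$. Taking the supremum over $s$ gives $u_M(t)\uparrow u(t)$. Here $u(t)$ is finite by Corollary~\ref{cor:u-finite-continuous-final}.

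\emph{Continuity of each $u_M$.} For $t,t'\in\Sigma$ and $s\in K_M$ we have $|F_t(s)-F_{t'}(s)|=|\langle t-t',s\rangle|\le \sqrt d\,M\|t-t'\|$. Taking suprema over $s\in K_M$ shows that $u_M$ is Lipschitz on $\Sigma$, in particular continuous. The limit $u$ is continuous on $\Sigma$ by Corollary~\ref{cor:u-finite-continuous-final}, which rests on the Gale--Klee--Rockafellar theorem.

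\emph{Dini's theorem.} We now have a monotone nondecreasing family of continuous functions $u_M$ on the compact set $\Sigma$ converging pointwise to the continuous function $u$. Dini's theorem then gives uniform convergence. It is applied to the sequence $M=1,2,\dots$, and monotonicity extends the conclusion to real $M\to\infty$. This yields \eqref{eq:Dini-uM-final}.

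The one genuinely delicate input is the continuity of $u$ up to the boundary of $\Sigma$. Lower semicontinuity alone would not suffice for Dini's theorem, and this is exactly why the polytope structure was used earlier. As a sanity check independent of that theorem, one could also argue directly: by Lemma~\ref{lem:qgrowth-final}, points with $m(s)\ge M_0$ give $F_t(s)\le -\varepsilon M_0\le u_M(t)$ whenever $0\in K_M$ and $M_0\ge q(0)/\varepsilon$. So only the region where $m(s)<M_0$ but some coordinate $s_j\to-\infty$ matters, and that region is controlled only through the structure near faces of $\Sigma$ where $t_j=0$. This is why I would rely on the continuity already established rather than attempt a direct uniform estimate.
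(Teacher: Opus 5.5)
Your proof is correct and follows the paper's argument essentially verbatim. You obtain monotone pointwise convergence from $K_M\uparrow\R^d$, continuity of each $u_M$ from the Lipschitz bound $\sqrt d\,M\|t-t'\|$, continuity of $u$ from Corollary~\ref{cor:u-finite-continuous-final}, and then apply Dini's theorem on the compact simplex $\Sigma$, exactly as the paper does; your closing remark also correctly singles out continuity of $u$ up to $\partial\Sigma$ as the one nontrivial input, which is where the paper invokes the Gale--Klee--Rockafellar theorem.
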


\begin{proof}
Since $K_M\subset K_{M'}$ for $M\le M'$ and $\bigcup_{M\ge 1}K_M=\R^d$, we have $u_M(t)\uparrow u(t)$ for each $t\in\Sigma$.

Fix $M\in\mathbb{N}$. For any $t,t'\in\Sigma$, we have
\begin{align*}
u_M(t)-u_M(t') &= \sup_{s\in K_M}F_t(s)-\sup_{s\in K_M}F_{t'}(s) \le \sup_{s\in K_M}\big(F_t(s)-F_{t'}(s)\big) \\
&= \sup_{s\in K_M}\langle t-t',s\rangle \le \sqrt d\,M\,\|t-t'\|_2,
\end{align*}
where we used Cauchy--Schwarz and the bound $\|s\|_2\le \sqrt d\,M$ for $s\in K_M$.
The same estimate holds with $t$ and $t'$ interchanged, hence $u_M$ is continuous on $\Sigma$.
Since $u$ is continuous on $\Sigma$ by Corollary~\ref{cor:u-finite-continuous-final} and $\Sigma$ is compact,
Dini's theorem applied to the increasing sequence $(u_M)_{M\in\mathbb{N}}$ yields \eqref{eq:Dini-uM-final}.
\end{proof}

We now evaluate the global integral by decomposing the domain. For $M\ge M_0$, we write $\R^d$ as the disjoint union $\R^d = A_M \cup B_M$, where
\begin{equation}\label{eq:domain-decomp}
A_M := (-\infty, M]^d = \{s\in\R^d:\ m(s)\le M\} \quad \text{and} \quad B_M := \{s\in\R^d:\ m(s)>M\}.
\end{equation}
We first provide a uniform bound for the integral over the tail region $B_M$.

\begin{lem}[Uniform tail estimate]\label{lem:tail-estimate-final}
For all $n$ with $n\varepsilon>d$ and all $t\in\Sigma$, we have
\begin{equation}\label{eq:tail-bound-final}
\int_{B_M} e^{2nF_t(s)}e^{2\langle\mathbf 1,s\rangle}\,ds
\le \frac{d\,2^{-(d-1)}}{2n\varepsilon-2d}\,e^{(2d-2n\varepsilon)M}.
\end{equation}
In particular, $\limsup_{n\to\infty}\ \sup_{t\in\Sigma}\frac1{2n}\log\!\int_{B_M} e^{2nF_t(s)}e^{2\langle\mathbf 1,s\rangle}\,ds
\le -\varepsilon M.$

\end{lem}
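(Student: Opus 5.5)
The plan is a direct pointwise domination of the integrand on $B_M$, followed by splitting $B_M$ according to which coordinate attains the maximum. Each resulting piece is an explicit iterated exponential integral.

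\emph{Step 1: pointwise domination.} Since $M\ge M_0$, every $s\in B_M$ satisfies $m(s)>M\ge M_0$. Lemma~\ref{lem:qgrowth-final} then gives $F_t(s)\le-\varepsilon\, m(s)$, uniformly in $t\in\Sigma$. Hence on $B_M$,
\begin{equation*}
e^{2nF_t(s)}e^{2\langle\mathbf 1,s\rangle}\le e^{-2n\varepsilon\, m(s)}\prod_{i=1}^d e^{2s_i}.
\end{equation*}
This bound no longer depends on $t$, which is what makes the final estimate uniform over $\Sigma$.

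\emph{Step 2: decomposition and computation.} Cover $B_M$ by the $d$ sets
\begin{equation*}
B_M^{(j)}:=\{s:\ s_j=m(s)>M\},\qquad j=1,\dots,d.
\end{equation*}
On $B_M^{(j)}$, write $x:=s_j>M$; every other coordinate then satisfies $s_i\le x$. By Tonelli, the integral of the dominating function over $B_M^{(j)}$ is at most
\begin{equation*}
\int_M^\infty e^{-2n\varepsilon x}e^{2x}\prod_{i\ne j}\Big(\int_{-\infty}^{x}e^{2s_i}\,ds_i\Big)\,dx
=2^{-(d-1)}\int_M^\infty e^{(2d-2n\varepsilon)x}\,dx .
\end{equation*}
The condition $n\varepsilon>d$ makes the remaining exponent negative, so this equals $2^{-(d-1)}e^{(2d-2n\varepsilon)M}/(2n\varepsilon-2d)$. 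Summing over the $d$ choices of $j$ yields \eqref{eq:tail-bound-final}.

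\emph{Step 3: the limsup.} Take $\frac1{2n}\log$ of \eqref{eq:tail-bound-final}. This gives
\begin{equation*}
\frac{1}{2n}\log\frac{d\,2^{-(d-1)}}{2n\varepsilon-2d}+\Big(\frac dn-\varepsilon\Big)M,
\end{equation*}
independently of $t$. The first term tends to $0$ because its logarithm grows only like $\log n$. The second term tends to $-\varepsilon M$.

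There is no serious obstacle here. The only point needing care is that the coordinates $s_i$ range down to $-\infty$, so integrability in those directions must come from the weight $e^{2s_i}$ rather than from $F_t$. The splitting by the maximal coordinate is what makes this work.
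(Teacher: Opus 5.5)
Your proposal is correct and follows the paper's proof essentially step for step. It uses the same cover of $B_M$ by the $d$ sets on which the $j$-th coordinate is maximal and exceeds $M$, the same $t$-independent bound $F_t(s)\le-\varepsilon\,m(s)$ from Lemma~\ref{lem:qgrowth-final}, the same Tonelli computation giving $2^{-(d-1)}e^{(2d-2n\varepsilon)M}/(2n\varepsilon-2d)$ on each piece, and the same passage to $\frac{1}{2n}\log$ (the only cosmetic difference is that the paper re-derives $\langle t,s\rangle\le s_j$ on each piece rather than quoting the $F_t$ bound directly).
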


\begin{proof}
For each $j=1,\dots,d$ set $B_{M,j}:=\{s\in\R^d:\ s_j>M \text{ and } s_i\le s_j \text{ for all } i=1,\dots,d\}.$ If $s\in B_M$, choose $j$ such that $s_j=m(s)>M$. Then $s\in B_{M,j}$, hence
$B_M\subset\bigcup_{j=1}^d B_{M,j}$. Moreover, $m(s)=s_j$ on $B_{M,j}$. Therefore,
\begin{equation}\label{eq:UnionBd}
    \int_{B_M} e^{2nF_t(s)}e^{2\langle\mathbf 1,s\rangle}\,ds
\le \sum_{j=1}^d \int_{B_{M,j}} e^{2nF_t(s)}e^{2\langle\mathbf 1,s\rangle}\,ds.
\end{equation}

Fix $t\in\Sigma$ and $s\in B_{M,j}$. Since $s_i\le s_j$ for all $i$ and $t_i\ge 0$ with $|t|_1\le 1$, we have 
\begin{equation*}
\langle t,s\rangle=\sum_{i=1}^d t_i s_i \le \sum_{i=1}^d t_i s_j = |t|_1 s_j \le s_j.
\end{equation*}
Also $s_j=m(s)>M\ge M_0$, so Lemma~\ref{lem:qgrowth-final} yields $q(s)\ge (1+\varepsilon)m(s)=(1+\varepsilon)s_j$.
Hence $F_t(s)=\langle t,s\rangle-q(s)\le -\varepsilon s_j$, and consequently $e^{2nF_t(s)}e^{2\langle\mathbf 1,s\rangle} \le e^{-2n\varepsilon s_j}\,e^{2(s_1+\cdots+s_d)}$.

Since the integrand is nonnegative, by Tonelli's theorem, we may integrate iteratively. Writing $\tau:=s_j$, we obtain
\begin{align*}
\int_{B_{M,j}} e^{2nF_t(s)}e^{2\langle\mathbf 1,s\rangle}\,ds
&\le \int_{\tau=M}^{\infty} e^{-2n\varepsilon \tau}
\left(\int_{\{s_i\le \tau,\ i\neq j\}} e^{2\sum_{i\neq j}s_i}\,ds_{i\neq j}\right)
e^{2\tau}\,d\tau \\
&= \int_M^\infty e^{-2n\varepsilon \tau}
\left(\prod_{i\neq j}\int_{-\infty}^\tau e^{2s_i}\,ds_i\right)e^{2\tau}\,d\tau \\
&= 2^{-(d-1)}\int_M^\infty e^{-2n\varepsilon \tau}\,e^{2(d-1)\tau}\,e^{2\tau}\,d\tau \\
&= 2^{-(d-1)}\int_M^\infty e^{(2d-2n\varepsilon)\tau}\,d\tau
= \frac{2^{-(d-1)}}{2n\varepsilon-2d}\,e^{(2d-2n\varepsilon)M},
\end{align*}
for all $n\varepsilon>d$. Summing over $j=1,\dots,d$ and using \eqref{eq:UnionBd} yields \eqref{eq:tail-bound-final}.

Finally, taking $\frac1{2n}\log$ in \eqref{eq:tail-bound-final} gives
\begin{equation*}
\frac1{2n}\log\!\int_{B_M} e^{2nF_t(s)}e^{2\langle\mathbf 1,s\rangle}\,ds
\le \frac1{2n}\log\!\Big(\frac{d\,2^{-(d-1)}}{2n\varepsilon-2d}\Big) +\Big(\frac{d}{n}-\varepsilon\Big)M,
\end{equation*}
uniformly in $t\in\Sigma$. Letting $n\to\infty$ gives the last statement.
\end{proof}

We now combine  Lemma~\ref{lem:laplace-on-cube-final}, Lemma~\ref{lem:Dini-uM-final}, and Lemma~\ref{lem:tail-estimate-final} to obtain a uniform Laplace principle on $\R^d$. Hence the exponential profile for $(c_{\alpha,n})$.

\begin{prop}[Exponential profile for $(c_{\alpha,n})$]\label{prop:exp-profile-orthonormal-final}
Let $u$ be defined by \eqref{eq:u-def-final}. Then
\begin{equation}\label{eq:I-asymptotics-final}
\lim_{n\to\infty}\ \sup_{\alpha\in n\Sigma\cap\Z_+^d}\left|
\frac1{2n}\log I_{\alpha,n}-u(\alpha/n)\right|=0.
\end{equation}
Consequently, $\lim_{n\to\infty}\ \sup_{\alpha\in n\Sigma\cap\Z_+^d}\left|
\frac1n\log c_{\alpha,n}+u(\alpha/n)\right|=0.$
\end{prop}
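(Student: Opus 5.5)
Writing $J_n(t):=\int_{\R^d}e^{2nF_t(s)}e^{2\langle\mathbf 1,s\rangle}\,ds$, the identity \eqref{eq:Laplace-form-final} gives $I_{\alpha,n}=(2\pi)^dJ_n(\alpha/n)$. Since $\frac{1}{2n}\log(2\pi)^d\to0$, it suffices to prove
\[
\lim_{n\to\infty}\sup_{t\in\Sigma}\Big|\tfrac{1}{2n}\log J_n(t)-u(t)\Big|=0 .
\]
This is stronger than needed, because it allows arbitrary $t\in\Sigma$ rather than only lattice points $\alpha/n$. The plan is to split $\R^d$ as $K_M\cup(A_M\setminus K_M)\cup B_M$ with $M\ge M_0$ large, and to control each piece with Lemmas~\ref{lem:laplace-on-cube-final}, \ref{lem:Dini-uM-final} and \ref{lem:tail-estimate-final}.

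\emph{Lower bound.} Since the integrand is nonnegative, $J_n(t)\ge\int_{K_M}(\cdots)$. Lemma~\ref{lem:laplace-on-cube-final} then gives $\frac1{2n}\log J_n(t)\ge u_M(t)-o(1)$ uniformly in $t$. By Lemma~\ref{lem:Dini-uM-final}, $u_M\ge u-\delta$ on $\Sigma$ for $M$ large, which yields $\liminf_n\inf_t\big(\frac1{2n}\log J_n(t)-u(t)\big)\ge-\delta$.

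\emph{Upper bound.} This is the main obstacle. On $K_M$ we use $F_t\le u$. On $B_M$ Lemma~\ref{lem:tail-estimate-final} gives exponential rate at most $-\varepsilon M$, and this is below $\min_\Sigma u-1$ once $M$ is large, since $u$ is bounded below by $-q(0)$. The delicate region is $A_M\setminus K_M$, where some coordinate $s_i<-M$ and the region is unbounded. There we still have $F_t(s)\le u(t)$ pointwise, but the volume is infinite. The weight $e^{2\langle\mathbf 1,s\rangle}$ rescues integrability: it suffices to bound
\[
\int_{A_M}e^{2nF_t}e^{2\langle\mathbf 1,s\rangle}\,ds\le e^{2nu(t)}\int_{(-\infty,M]^d}e^{2\langle\mathbf 1,s\rangle}\,ds=e^{2nu(t)}\,2^{-d}e^{2dM}.
\]
This bounds all of $A_M$ at once, so the cube lemma is not even needed for the upper bound. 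Adding the $B_M$ contribution,
\[
J_n(t)\le 2^{-d}e^{2dM}e^{2nu(t)}+\frac{d2^{-(d-1)}}{2n\varepsilon-2d}e^{(2d-2n\varepsilon)M}.
\]
Taking $\frac1{2n}\log$ and using $\log(a+b)\le\log2+\max(\log a,\log b)$ gives
\[
\frac1{2n}\log J_n(t)\le\max\big(u(t),-\varepsilon M\big)+O(M/n)+o(1)
\]
uniformly in $t$. Fixing $M$ with $-\varepsilon M<\min_\Sigma u$ (finite by Corollary~\ref{cor:u-finite-continuous-final}) and letting $n\to\infty$ gives $\limsup_n\sup_t\big(\frac1{2n}\log J_n(t)-u(t)\big)\le0$.

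Combining the two bounds and letting $\delta\downarrow0$ proves \eqref{eq:I-asymptotics-final}. Since $\log c_{\alpha,n}=-\frac12\log I_{\alpha,n}$, we have $\frac1n\log c_{\alpha,n}+u(\alpha/n)=-\big(\frac1{2n}\log I_{\alpha,n}-u(\alpha/n)\big)$, which gives the consequence. Hence the profile \eqref{eq:profile} holds on $P=\Sigma$ with $f=e^{-u}$, which is continuous and positive.
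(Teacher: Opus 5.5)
Your proposal is correct and follows the paper's proof essentially step for step. For the upper bound you use the same $A_M\cup B_M$ split, with $F_t\le u(t)$ giving the factor $2^{-d}e^{2dM}$ on $A_M$ and Lemma~\ref{lem:tail-estimate-final} on $B_M$; for the lower bound you use Lemma~\ref{lem:laplace-on-cube-final} on $K_M$ together with the Dini argument of Lemma~\ref{lem:Dini-uM-final}. The only cosmetic difference is that you choose $M$ via $-\varepsilon M<\min_\Sigma u$ rather than $-\varepsilon M<-q(0)$.
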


\begin{proof}
Fix $M\ge M_0$. Using the decomposition $\R^d = A_M \cup B_M$, for $t\in\Sigma$ we define
\begin{align*}
J_n(t):=\int_{\R^d} e^{2nF_t(s)}\,e^{2\langle\mathbf 1,s\rangle}\,ds 
\phantom{:}=\int_{A_M} e^{2nF_t(s)}e^{2\langle\mathbf 1,s\rangle}\,ds+\int_{B_M} e^{2nF_t(s)}e^{2\langle\mathbf 1,s\rangle}\,ds.
\end{align*}
By \eqref{eq:Laplace-form-final}, for $n\ge n_0$ and $t=\alpha/n$ we have $I_{\alpha,n}=(2\pi)^d\,J_n(t)$. Since $F_t(s)\le u(t)$ for all $s\in\R^d$, we obtain
\begin{equation}\label{eq:Jnt-AM-upper-final-rev}
\int_{A_M} e^{2nF_t(s)}e^{2\langle\mathbf 1,s\rangle}\,ds
\le e^{2nu(t)}\int_{A_M} e^{2\langle\mathbf 1,s\rangle}\,ds
= e^{2nu(t)}\Big(\frac{e^{2M}}{2}\Big)^d .
\end{equation}
Moreover, for $n\ge n_0$ we have $n\varepsilon>d$, and Lemma~\ref{lem:tail-estimate-final} yields
\begin{equation}\label{eq:Jnt-BM-upper-final-rev}
\int_{B_M} e^{2nF_t(s)}e^{2\langle\mathbf 1,s\rangle}\,ds
\le \frac{d\,2^{-(d-1)}}{2n\varepsilon-2d}\,e^{(2d-2n\varepsilon)M}.
\end{equation}
Combining \eqref{eq:Jnt-AM-upper-final-rev}--\eqref{eq:Jnt-BM-upper-final-rev} and using the standard inequality $a+b\le 2\max\{a,b\}$ gives
\begin{equation}\label{eq:Jnt-upper-max-final}
\frac1{2n}\log J_n(t)\le \max\{u(t),-\varepsilon M\}+o_{n,M}(1),
\end{equation}
where $o_{n,M}(1)\to 0$ as $n\to\infty$, uniformly in $t\in\Sigma$ (for fixed $M$).
Since $u(t)\ge F_t(0)=-q(0)$ for all $t\in\Sigma$, choosing $M$ sufficiently large so  that $-\varepsilon M<-q(0)$, gives
\begin{equation}\label{eq:Jnt-limsup-final}
\limsup_{n\to\infty}\ \sup_{t\in\Sigma}\Big(\frac1{2n}\log J_n(t)-u(t)\Big)\le 0.
\end{equation}

On the other hand, with $K_M=[-M,M]^d\subset A_M$ we have $J_n(t)\ge \int_{K_M} e^{2nF_t(s)}e^{2\langle\mathbf 1,s\rangle}\,ds$.
Recall that $u_M(t)=\sup_{s\in K_M}F_t(s)$. Then Lemma~\ref{lem:laplace-on-cube-final} implies
\begin{equation}\label{eq:Jnt-liminf-uM-final}
\liminf_{n\to\infty}\ \inf_{t\in\Sigma}\Big(\frac1{2n}\log J_n(t)-u_M(t)\Big)\ge 0.
\end{equation}
Consequently, for each fixed $M$, we have
\begin{equation*}
\liminf_{n\to\infty}\ \inf_{t\in\Sigma}\Big(\frac1{2n}\log J_n(t)-u(t)\Big)
\ge -\sup_{t\in\Sigma}\big(u(t)-u_M(t)\big).
\end{equation*}
Letting $M\to\infty$ and using \eqref{eq:Dini-uM-final} gives
\begin{equation}\label{eq:Jnt-liminf-final}
\liminf_{n\to\infty}\ \inf_{t\in\Sigma}\Big(\frac1{2n}\log J_n(t)-u(t)\Big)\ge 0.
\end{equation}

Combining \eqref{eq:Jnt-limsup-final} and \eqref{eq:Jnt-liminf-final} yields
\begin{equation}\label{eq:Jnt-unif-final}
\lim_{n\to\infty}\ \sup_{t\in\Sigma}\left|\frac1{2n}\log J_n(t)-u(t)\right|=0.
\end{equation}
Since $I_{\alpha,n}=(2\pi)^dJ_n(\alpha/n)$ for $n\ge n_0$, the constant contributes $\frac1{2n}\log(2\pi)^d\to 0$,
and \eqref{eq:I-asymptotics-final} follows. Finally, since $c_{\alpha,n}=I_{\alpha,n}^{-1/2},$ we are done.
\end{proof}

\begin{rem}\label{rem:general-P-final}
The same strategy extends to any polytope $P\subset\R_+^d$.
Set $\lambda_P:=\sup_{t\in P}|t|_1<\infty$.
Assume that there exist $\varepsilon>0$ and $R>0$ such that
\begin{equation*}
Q(z)\ge (\lambda_P+\varepsilon)\log\|z\|
\qquad\text{for }\|z\|\ge R.
\end{equation*}
Then the growth and tail estimates remain uniform for $t\in P$, and the proof of the uniform Laplace principle
and the exponential profile carries over with $P$ in place of $\Sigma$.
We do not pursue the details here.
\end{rem}
\medskip
Note that Proposition~\ref{prop:exp-profile-orthonormal-final} shows that the orthonormal ensemble \eqref{eq:random-orthopoly-ensemble} satisfies our exponential profile condition \eqref{eq:profile} on the standard unit simplex $\Sigma$, with the profile function $u:\Sigma \to \R$ given by $u(t)=\sup_{s\in\R^d}\big(\langle t,s\rangle-q(s)\big)$, where $q(s):=Q(e^s)$. Equivalently, this corresponds to the continuous weight function $f:\Sigma \to (0, \infty)$ defined by $f(t)=e^{-u(t)}$. Hence, the limiting zero current is given by $dd^c \Phi_{\Sigma,f}$. In this way, we recover the equidistribution result in \cite[Thm.~1.1]{Bay19} as a special case. 

We now show that the limiting potential $\Phi_{\Sigma,f}$ coincides with the weighted extremal function associated to $Q$. Recall that the weighted extremal function associated to $Q$ is defined by
\begin{equation*}
V_Q(z) := \sup\{w(z) : w \in \mathcal{L}(\C^d), \ w \le Q \text{ on } \C^d\}.
\end{equation*}
Seminal results of Siciak and Zakharyuta (see, e.g., the appendix by T. Bloom in \cite{SaffTotik97}) imply that $V_Q \in \mathcal{L}^+(\C^d)$ and
\begin{equation}\label{eq:VQ-Siciak}
V_Q(z) = \sup\left\{ \frac{1}{\deg p} \log |p(z)| : p \text{ is a polynomial and } \sup_{\zeta\in\C^d} \big(|p(\zeta)|e^{-\deg(p)Q(\zeta)}\big) \le 1\right\}.
\end{equation}
Moreover, a result of Berman \cite[Proposition 2.1]{Ber09} implies that $V_Q$ is of class $\mathscr{C}^{1,1}$.

\begin{prop}\label{prop:Phi-equals-VQ}
Assume that $Q$ is torus-invariant and satisfies the growth condition \eqref{eq:Q-growth-orthopoly}. Let $V_Q$ be the weighted extremal function defined above. Then $\Phi_{\Sigma,f}=V_Q$ on $\C^d$, and consequently $dd^c\Phi_{\Sigma,f}=dd^cV_Q$.
\end{prop}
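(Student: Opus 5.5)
The plan is to prove the two inequalities $\Phi_{\Sigma,f}\le V_Q$ and $V_Q\le\Phi_{\Sigma,f}$ on $(\C^*)^d$, and then extend the equality to all of $\C^d$. The extension uses Lemma~\ref{lem:psh-tools}(ii), since both functions are plurisubharmonic and the coordinate hyperplanes have Lebesgue measure zero. The equality $dd^c\Phi_{\Sigma,f}=dd^cV_Q$ is then immediate. Throughout, $u=-\log f$ is the function \eqref{eq:u-def-final}, which is the Legendre transform of $q(s)=Q(e^s)$ restricted to $\Sigma$. It is continuous by Corollary~\ref{cor:u-finite-continuous-final}.

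\emph{Step 1: $\Phi_{\Sigma,f}\le V_Q$.} By Lemma~\ref{lem:PhiPf-psh}, $\Phi_{\Sigma,f}\in\PSH(\C^d)$ and $\Phi_{\Sigma,f}\le H_\Sigma+\sup_\Sigma\log f=\log^+\|z\|+O(1)$, so $\Phi_{\Sigma,f}\in\mathcal L(\C^d)$. By definition of $u$ we have $u(t)\ge\langle t,s\rangle-q(s)$ for every $t\in\Sigma$ and $s\in\R^d$. Hence
\[
\langle t,s\rangle-u(t)\le q(s)\quad\text{for all } t\in\Sigma,\ s\in\R^d .
\]
Taking the supremum over $t$ gives $\Phi_{\Sigma,f}(z)=I_{\Sigma,f}(\Log z)\le Q(z)$ on $(\C^*)^d$. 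Since $Q$ is continuous, the upper semicontinuous regularization \eqref{eq:PhiPf} preserves this bound, so $\Phi_{\Sigma,f}\le Q$ on $\C^d$. Thus $\Phi_{\Sigma,f}$ is a competitor in the definition of $V_Q$, and $\Phi_{\Sigma,f}\le V_Q$.

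\emph{Step 2: toric structure of $V_Q$.} Since $Q$ is torus-invariant, each rotation $w\mapsto w(e^{i\theta_1}z_1,\dots,e^{i\theta_d}z_d)$ maps the class of competitors into itself. Therefore $V_Q$ is $d$-circled, and we can write $V_Q(z)=v(\Log z)$ on $(\C^*)^d$. A $d$-circled plurisubharmonic function is convex in logarithmic coordinates, so $v$ is convex on $\R^d$. It is also finite and continuous, because $V_Q$ is continuous (indeed $\mathscr C^{1,1}$ by Berman's result). Two further properties follow. From $V_Q\le Q$ we get $v\le q$. From $V_Q\in\mathcal L(\C^d)$ we get $v(s)\le m(s)^++C=h_\Sigma(s)+C$ for some constant $C$.

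\emph{Step 3: $V_Q\le\Phi_{\Sigma,f}$ via biconjugation.} Let $v^*(t)=\sup_s(\langle t,s\rangle-v(s))$ be the Legendre transform of $v$. The first claim is that $\mathrm{dom}\,v^*\subset\Sigma$. If $t\notin\Sigma$, the half-space representation of $\Sigma$ via $h_\Sigma$ gives a direction $y$ with $\langle t,y\rangle>h_\Sigma(y)$. Then, using the growth bound from Step 2,
\[
\langle t,\lambda y\rangle-v(\lambda y)\ \ge\ \lambda\big(\langle t,y\rangle-h_\Sigma(y)\big)-C\ \longrightarrow\ \infty \quad(\lambda\to\infty),
\]
so $v^*(t)=+\infty$. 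The second claim is that $v^*\ge q^*=u$ on $\Sigma$, which follows from $v\le q$ because conjugation reverses inequalities. Since $v$ is convex, finite and continuous, Fenchel--Moreau gives
\[
v(s)=\sup_{t\in\mathrm{dom}\,v^*}\big(\langle t,s\rangle-v^*(t)\big)\ \le\ \sup_{t\in\Sigma}\big(\langle t,s\rangle-u(t)\big)=I_{\Sigma,f}(s).
\]
Hence $V_Q\le\Phi_{\Sigma,f}$ on $(\C^*)^d$.

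Combining Steps 1 and 3 with Lemma~\ref{lem:psh-tools}(ii) yields $V_Q=\Phi_{\Sigma,f}$ on all of $\C^d$. The main point needing care is Step 2, namely the torus invariance of $V_Q$ and the convexity of $v$. Both are standard, via invariance of the competitor class and the classical fact that toric plurisubharmonic functions are convex in $\Log$-coordinates. Everything else is elementary convex duality.
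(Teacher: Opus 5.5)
Your proposal is correct and follows essentially the same route as the paper. $\Phi_{\Sigma,f}\le Q$ makes it a competitor for $V_Q$, and the reverse inequality comes from writing $V_Q=v\circ\Log$ with $v$ convex, showing $\mathrm{dom}\,v^*\subset\Sigma$ via the Lelong growth bound, using $v^*\ge q^*=u$ on $\Sigma$, and applying Fenchel--Moreau. The only difference is cosmetic: you pass from $(\C^*)^d$ to $\C^d$ using a.e.\ uniqueness of plurisubharmonic functions (Lemma~\ref{lem:psh-tools}(ii)), whereas the paper uses continuity of $V_Q$ and the upper semicontinuous regularization defining $\Phi_{\Sigma,f}$; both are equally valid.
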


\begin{proof}
Let $s=\Log |z|\in\R^d$ for $z\in(\C^*)^d$, and set $q(s):=Q(e^s)$. Since $Q$ is torus-invariant, $q$ is a well-defined function on $\R^d$. Let $q^*(t):=\sup_{y\in\R^d}\big(\langle t,y\rangle-q(y)\big)$ denote its Legendre--Fenchel transform. By construction, the restriction $q^*|_{\Sigma}$ coincides with the profile function $u$ from \eqref{eq:u-def-final}. 

To establish $\Phi_{\Sigma,f}\le V_Q$, we observe that for any $t\in\Sigma$ and $s\in\R^d$, the definition of $q^*$ gives $\langle t,s\rangle-q^*(t)\le q(s)$. Taking the supremum over $t\in\Sigma$ and using $q^*|_{\Sigma}=u$, we obtain $\Phi_{\Sigma,f}(e^s)\le q(s)$, which means $\Phi_{\Sigma,f}(z)\le Q(z)$ on $(\C^*)^d$. Since $Q$ is continuous and $\Phi_{\Sigma,f}$ is defined via upper semicontinuous extension from the dense torus, this bound extends to $\C^d$. As we already know that $\Phi_{\Sigma,f}\in\mathcal{L}(\C^d)$, by maximality of $V_Q$ we have that $\Phi_{\Sigma,f}\le V_Q$ on $\C^d$.

Conversely, let $v(s):=V_Q(e^s)$. Since $Q$ and the Lelong class $\mathcal{L}(\C^d)$ are  torus-invariant, the extremal function $V_Q$ is a torus-invariant plurisubharmonic function. Thus, $v$ is a finite convex function on $\R^d$ satisfying $v \le q$. Convex duality implies $v^*(t)\ge q^*(t)$ for all $t\in\R^d$. Furthermore, the assumption $V_Q \in\mathcal L(\C^d)$ implies there exists a constant $C_0$ such that $v(s)\le \max\{0,s_1,\dots,s_d\}+C_0 = h_{\Sigma}(s) + C_0$, where $h_{\Sigma}(s) := \sup_{\tau\in\Sigma}\langle \tau,s\rangle$ is the support function of $\Sigma$. If $t\notin\Sigma$, strict separation of the closed convex set $\Sigma$ from the point $t$ provides $s_0\in\R^d$ such that $\langle t,s_0\rangle > h_{\Sigma}(s_0)$. For $\lambda>0$, using $v(\lambda s_0)\le \lambda h_{\Sigma}(s_0)+C_0$, we obtain
\begin{equation*}
\langle t,\lambda s_0\rangle - v(\lambda s_0) \ge \lambda\big(\langle t,s_0\rangle-h_{\Sigma}(s_0)\big)-C_0 \xrightarrow[\lambda\to\infty]{} +\infty.
\end{equation*}
Hence $v^*(t)=+\infty$ for all $t\notin\Sigma$.

Because $v$ is a finite continuous convex function on $\R^d$, applying the Fenchel--Moreau theorem ($v = v^{**}$) yields
\begin{equation*}
v(s)=\sup_{t\in\R^d}\big(\langle t,s\rangle-v^*(t)\big)=\sup_{t\in\Sigma}\big(\langle t,s\rangle-v^*(t)\big) \le \sup_{t\in\Sigma}\big(\langle t,s\rangle-u(t)\big) = \Phi_{\Sigma,f}(e^s).
\end{equation*}
This shows $V_Q(z)\le \Phi_{\Sigma,f}(z)$ for $z \in (\C^*)^d$. Since $V_Q$ is continuous and $\Phi_{\Sigma,f}$ is defined via upper semicontinuous extension from the dense torus, this inequality extends to $\C^d$, completing the proof.
\end{proof}

\end{document}